\documentclass[12pt]{amsart}
\usepackage{amsmath}
\usepackage{amsfonts}
\usepackage{amsthm}
\usepackage{amssymb}
\usepackage{amscd}
\usepackage[all]{xy}
\usepackage{enumerate}
\textheight22truecm
\textwidth17truecm
\oddsidemargin-0.5truecm
\evensidemargin-0.5truecm

\keywords{Fibrations, Massey products, Fujita decomposition over higher-dimensional base, Local systems, Castelnuovo-de Franchis Theorem} 

\subjclass[2010]{14D06, 14C30, 14J40, 32G20}

\pagestyle{myheadings}

\theoremstyle{plain}
\newtheorem{thm}{Theorem}[section]

\newtheorem{prop}[thm]{Proposition}

\newtheorem{cor}[thm]{Corollary}

\newtheorem{lem}[thm]{Lemma}

\theoremstyle{definition}
\newtheorem{defn}[thm]{Definition}

\newtheorem*{ackn}{Acknowledgment}

\newtheorem{rmk}[thm]{Remark}
\newcommand{\sA}{\mathcal{A}}

\newcommand{\sD}{\mathcal{D}}
\newcommand{\sE}{\mathcal{E}}
\newcommand{\sF}{\mathcal{F}}

\newcommand{\sH}{\mathcal{H}}
\newcommand{\sI}{\mathcal{I}}

\newcommand{\sK}{\mathcal{K}}
\newcommand{\sL}{\mathcal{L}}

\newcommand{\sO}{\mathcal{O}}

\newcommand{\sU}{\mathcal{U}}

\newcommand{\sW}{\mathcal{W}}

\newcommand{\mC}{\mathbb{C}}
\newcommand{\mD}{\mathbb{D}}

\newcommand{\mW}{\mathbb{W}}

\newcommand{\Ker}{\mathrm{Ker}\,}

\usepackage{color}



\numberwithin{equation}{section}

\newcommand{\beba}  {\begin{equation}\begin{array}{rcl}}

\newcommand{\eaee}  {\end{array}\end{equation}}

\makeatletter
\def\l@section{\@tocline{1}{0pt}{1pc}{}{}}
\def\l@subsection{\@tocline{2}{0pt}{1pc}{4.6em}{}}
\def\l@subsubsection{\@tocline{3}{0pt}{1pc}{7.6em}{}}
\renewcommand{\tocsection}[3]{%
  \indentlabel{\@ifnotempty{#2}{\makebox[2.3em][l]{%
    \ignorespaces#1 #2.\hfill}}}#3}
\renewcommand{\tocsubsection}[3]{%
  \indentlabel{\@ifnotempty{#2}{\hspace*{2.3em}\makebox[2.3em][l]{%
    \ignorespaces#1 #2.\hfill}}}#3}
\renewcommand{\tocsubsubsection}[3]{%
  \indentlabel{\@ifnotempty{#2}{\hspace*{4.6em}\makebox[3em][l]{%
    \ignorespaces#1 #2.\hfill}}}#3}
\makeatother

\setcounter{tocdepth}{4}


\title[Massey products and Fujita decomposition over higher dimensional base]{Massey products and Fujita decomposition over higher dimensional base}

\author{Luca Rizzi}
\address{Luca Rizzi\\Department of Mathematics, Computer Science and Physics \\
	Universit\`a di Udine\\
	Udine, 33100\\ Italia
	\texttt{luca.rizzi@uniud.it}}

%
%

\begin{document}

\begin{abstract} Let  $f\colon X\to Y$ be a semistable fibration between smooth complex  varieties of dimension $n$ and $m$. This paper contains an analysis of the local systems of de Rham closed relative one forms and top forms on the fibers. 
In particular the latter recovers the local system of the second Fujita decomposition of $f_*\omega_{X/Y}$ over higher dimensional base.
The so called theory of Massey products allows, under natural Castelnuovo-type hypothesis, to study the finiteness of the associated monodromy representations. Motivated by this result, we also make precise the close relation between Massey products and Castelnuovo-de Franchis type theorems. 
\end{abstract}

\maketitle
\section{Introduction}

In this paper $f\colon X\to Y$ will denote a proper surjective morphism between complex smooth varieties of dimension $n$ and $m$ respectively and with general smooth fiber denoted by $F$ or, if we want to make the base point $y\in Y$ explicit, $F_y$. We mainly focus on \emph{semistable fibrations} according to the definition given in \cite{Il}, \cite{MR}, see Section \ref{sez1} for the details.

If $Y=B$ is a smooth curve, a famous result of Fujita, see \cite{Fu} and \cite{Fu2}, states that the direct image of the relative dualizing sheaf $f_*\omega_{X/B}$ has two splittings, the \emph{first and second Fujita decomposition}. These are respectively   
\begin{equation}
f_*\omega_{X/B}\cong \sO_B^h\oplus \sE
\end{equation} where $\sE$ is a locally free nef sheaf on $B$ with $h^1(B,\sE\otimes \omega_B)=0$
and
\begin{equation}
f_*\omega_{X/B}\cong \sU\oplus \sA
\end{equation} where $\sU$ is a unitary flat vector bundle and $\sA$ ample. 

This result has been later generalized in different ways, and in this paper we are mostly concerned with the case where the dimension of $Y$ is greater than 1. This is studied in \cite{CK} where the authors prove that the second Fujita decomposition over higher dimensional base $Y$ is as follows:
\begin{equation}
\label{fujitaCK}
f_*\omega_{X/Y}\cong \sU\oplus \sW
\end{equation} where $\sU$ is locally free and unitary flat with respect to the natural hermitian connection and $\sW$ is generically ample, i.e., $\sW_{|C}$ is an ample vector bundle for any general curve section $C$ of $Y$.

Now recall that there is a one to one correspondence modulo isomorphism between flat vector bundles on $Y$, local systems of $\mC$ vector spaces on $Y$ and representations of the fundamental group $\pi_1(Y,y)$.
Hence, naturally associated to $\sU$ there are also a local system and a representation.
In the case of $Y=B$ a curve, an analysis  of $\sU$ and its associated monodromy is in \cite{PT} and \cite{RZ4}. In this paper, motivated by the above mentioned \cite{CK}, we deal with the general case where $\dim Y>1$.

We study $\sU$ and the associated monodromy thanks to the theory of \emph{Massey products}. Massey products have been introduced in \cite{CP} and \cite{PZ} and then applied in \cite{Ra}, \cite{PR}, \cite{CNP}, \cite{victor}, \cite{BGN}, 
\cite{RZ2}, \cite{RZ3}, \cite{RZ1}, \cite{CRZ} and \cite{R}, hence we refer to this sources for a complete discussion and here we give only the general idea of the construction.
In the case of one dimensional base $Y=B$, consider the $(n-1)$-dimensional fiber $F$ and take 1-forms,  $\eta_1,\dots,\eta_n$, in the kernel of the cup product $\cup \xi\colon H^0(\Omega^1_{F})\to H^1(\sO_F)$ where $\xi$ is the associated infinitesimal deformation, $\xi\in H^1(T_F)$.
By the exact sequence 
\begin{equation}
	\label{solitain}
	0\to \sO_{F}\to \Omega^1_{X|F}\to \Omega^1_{F}\to 0
\end{equation} these sections can be lifted to $H^0(\Omega^1_{X|F})$.
 Choosing $s_1,\dots,s_n\in H^0(\Omega^1_{X|F})$ liftings of the $\eta_i$ we have a top form $\Omega\in H^{0}(\omega_{X|F})$ from the element $s_{1}\wedge\ldots\wedge s_{n}$. Since $\omega_{X|F}\cong \omega_{F}$ we obtain from $\Omega$ a top form of the canonical sheaf $\omega_{F}$. Such a form is the classical \emph{adjoint form} or \emph{Massey product} of $\eta_1,\dots,\eta_n$. We usually denote it by $m_{\xi}(\eta_1,\ldots,\eta_n)$. 

We are actually interested in the condition called Massey triviality: we say that the sections $\eta_1,\dots,\eta_n$ are Massey trivial if their Massey product $m_{\xi}(\eta_1,\ldots,\eta_n)$ is a linear combination of the top forms $\eta_1\wedge\dots\wedge\widehat{\eta_i}\wedge\dots\wedge\eta_n$, $i=1\dots,n$. The Massey product $m_{\xi}(\eta_1,\ldots,\eta_n)$ depends on the choice of the liftings $s_i$, but it turns out that the condition of Massey triviality does not. If furthermore the forms $\eta_1\wedge\dots\wedge\widehat{\eta_i}\wedge\dots\wedge\eta_n$ are linearly independent in $H^0(\omega_F)$ we say that the $\eta_i$ form a \emph{strict} subspace of $H^0(\Omega^1_F)$. 

In this paper the above construction is not directly helpful for two reasons. First we have to deal with the case where the base is of dimension strictly greater than $1$. Furthermore, since we want to study a family $X\to Y$ and not just the neighborhood around a fiber $F$, it is not enough to construct Massey products on single fibers independently as described above, but we have to work in families, using the ideas of \cite{PT}, \cite{RZ4}.

The solution to both these problems is studied in Section \ref{sezioneaggiunta}.
The idea is to take the pushforward via $f$ of the exact sequence 
\begin{equation}
\label{diffrelint}
0\to f^*\Omega^1_Y\to \Omega^1_X\to \Omega^1_{X/Y}\to 0
\end{equation}
where $\Omega^1_{X/Y}$ is the sheaf of relative differentials. Now the connecting morphism 
$$
\partial \colon f_*\Omega^1_{X/Y}\to R^1f_*\sO_X\otimes\Omega^1_Y
$$ restricted on the general fiber is exactly the cup product $\cup\xi$, and we will denote its kernel by $K_\partial:=\ker\partial$. This means that a local section of $K_\partial$ on a suitable open subset $A$ of $Y$ can be regarded as a collection of liftable 1-forms $\{\eta_{y}\}_{y\in A}$ on each fiber $F_y$. One of the central points of this construction is that such sections of $K_\partial$ can always be lifted to $f_*\Omega^1_X$; this is proved in Lemma \ref{split}. These liftings allow the definition of a new notion of Massey product which works both over higher dimensional base and in families. Of course the notion of Massey triviality and strictness also extended nicely, see Section \ref{sezioneaggiunta} for details. 

Actually it turns out that $K_\partial$ contains a local system $\mD$ and the study of Massey products from sections of $\mD$ is the right approach to monodromy problems.

In the case $d:=n-m=1$, that is when $F$ is a curve, the local system $\mD$ is actually the local system univocally associated to the flat vector bundle $\sU$ of the second Fujita decomposition (\ref{fujitaCK}). In the general case there is still a close relation between $\mD$ and the local system associated to $\sU$, denoted by $\mD^{d}$. In fact by the analysis contained in Section \ref{sez1}, see also \cite{RZ4}, both are local systems related to closed holomorphic differential forms on $X$ as follows.  The local systems $\mD$ and $\mD^{d}$ can be interpreted respectively as the sheaves of 1-forms and top forms  on the fibers which can be lifted to \textit{closed} holomorphic forms on  $X$. More precisely $\mD$ is the image of $f_*\Omega^1_{X,d}$ in the sheaf of closed relative differential 1-forms $f_*\Omega^1_{X/Y,d_{X/Y}}$ and similarly $\mD^d$ is the image of $f_*\Omega^d_{X,d}$ in the sheaf of closed relative differential $d$-forms $f_*\Omega^d_{X/Y,d_{X/Y}}$.

Now let $A\subset B$ be a contractible open subset and $W\subset \Gamma(A, \mD)$ a vector subspace of dimension at least $d+1$. We say that $W$ is \emph{Massey trivial} if any $d+1$-uple of sections in $W$ is Massey trivial, see Definition \ref{mastriv}.
Furthermore we say that a local subsystem $\mW\leq\mD$ is \emph{Massey trivial generated} if its general fiber is generated under the monodromy action by a Massey trivial vector space $W$, see Definition \ref{mastrivgen}.
In Section \ref{sez4} we show that these conditions of Massey triviality are strictly related to a theorem by Castelnuovo and de Franchis, recalled in Theorem \ref{cas2}.

 Section \ref{sez5} contains the study of the monodromy of a Massey trivial generated local system $\mW$. Call $\rho_\mW $ the action of the fundamental group $\pi_1(Y, y)$ on the stalk of $\mW$ and call $G_\mW=\pi_1(Y, y)/\ker \rho_\mW$ the monodromy group. 
We construct a faithful action of this group on a suitable set $\sK$ of surjective morphisms from the general fiber $F$ to a normal variety $Z$ of general type and since it is well known that the number of such maps is finite, $\sK$ is finite and we can prove the finiteness of the monodromy.
\begin{thm}
	\label{A}
Let $f \colon X \to Y$ be a semistable fibration and let $\mW\leq\mD$ be a  local system generated by a maximal strict Massey trivial subspace.
Then the associated monodromy group $G_\mW$ is finite and the fiber of $\mW$ is isomorphic to 
$$
\sum_{k\in \sK} k^*H^0(Z,\Omega^1_Z).
$$
\end{thm}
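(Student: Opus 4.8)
The plan is to turn the maximal strict Massey trivial subspace into a geometric fibration via Castelnuovo--de Franchis, let the monodromy permute the resulting fibrations, and then invoke the classical finiteness of morphisms onto varieties of general type. First I would exploit the hypothesis that $\mW$ is generated by a maximal strict Massey trivial subspace $W\subset\Gamma(A,\mD)$. By the correspondence between Massey triviality and the Castelnuovo--de Franchis Theorem~\ref{cas2} developed in Section~\ref{sez4}, the strictness and Massey triviality of $W$ force the appropriate wedge products of its sections to vanish in $H^0(\omega_F)$, so that $W$ is the pullback of the space of holomorphic $1$-forms of a surjective morphism with connected fibres $k_0\colon F\to Z_0$ onto a normal variety $Z_0$; that is, $W=k_0^*H^0(Z_0,\Omega^1_{Z_0})$. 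The maximality of $W$ is exactly what guarantees that $Z_0$ is of general type and that $k_0$ cannot be refined by a larger Massey trivial space.

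Next I would analyse how the monodromy $\rho_\mW$ of $\pi_1(Y,y)$ interacts with this construction. The conditions of Massey triviality and strictness (Definitions~\ref{mastriv} and \ref{mastrivgen}) are intrinsic: they are independent of the liftings produced by Lemma~\ref{split} and are formulated purely in terms of the local system $\mD$, hence they are preserved by $\rho_\mW$. Therefore, for each $\gamma\in\pi_1(Y,y)$ the translate $\rho_\mW(\gamma)W$ is again a maximal strict Massey trivial subspace and, by the first step, corresponds to a surjective morphism $k_\gamma\colon F\to Z_\gamma$ onto a normal variety of general type. I would then set $\sK$ to be the collection of all morphisms obtained from the monodromy orbit of $W$; by construction $G_\mW$ acts on $\sK$ by permutations.

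The engine of the proof is then the finiteness of $\sK$ together with faithfulness. For finiteness I would invoke the classical theorem, in the de Franchis--Kobayashi--Ochiai tradition, that a fixed smooth projective variety $F$ admits only finitely many surjective morphisms with connected fibres onto varieties of general type; applied here this makes $\sK$ finite. For faithfulness I would reconstruct each morphism $k\colon F\to Z$ from its subspace $k^*H^0(Z,\Omega^1_Z)$ of forms, via the Stein factorization of the map attached to those forms, so that an element of $\pi_1(Y,y)$ fixing every member of $\sK$ induces on each target $Z$ an automorphism; since $Z$ is of general type and $\Aut(Z)$ is finite, the residual action is finite, and one identifies the kernel of the permutation action with $\ker\rho_\mW$, giving a faithful action of $G_\mW$ on the finite set $\sK$. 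Consequently $G_\mW$ embeds into $\mathrm{Sym}(\sK)$ and is finite, which is the argument carried out in Section~\ref{sez5}.

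Finally, since $\mW$ is Massey trivial generated, its general fibre is by definition the span under the monodromy of the generating space $W$, so
\[
\mW_y=\sum_{\gamma\in\pi_1(Y,y)}\rho_\mW(\gamma)W=\sum_{k\in\sK}k^*H^0(Z,\Omega^1_Z),
\]
which is the asserted description of the fibre. I expect the principal obstacle to be the faithfulness step: one must show that the permutation action on $\sK$ has kernel precisely $\ker\rho_\mW$, which requires reconstructing each fibration functorially from its Massey trivial subspace and controlling the residual automorphisms of the targets, and one must check that the Castelnuovo--de Franchis output is genuinely of general type so that the finiteness theorem for morphisms actually applies.
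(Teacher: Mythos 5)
Your overall strategy (Castelnuovo--de Franchis applied to the Massey trivial subspace, an action of the monodromy on a set of maps onto a general type variety, finiteness of that set via Kobayashi--Ochiai) is the same as the paper's, but the faithfulness step, which you yourself identify as the principal obstacle, is where your argument breaks down. You propose to show that an element of $\pi_1(Y,y)$ fixing every member of $\sK$ ``induces on each target $Z$ an automorphism'' and then to use finiteness of $\Aut(Z)$. There is no mechanism for this: the monodromy is a linear action on the stalk of $\mW$, it does not act on the fibre $F$ or on $Z$ as a variety, so an element fixing the subspace $k^*H^0(Z,\Omega^1_Z)$ setwise gives only a linear automorphism of $H^0(Z,\Omega^1_Z)$ with no reason to be induced by an element of $\Aut(Z)$. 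Even granting that claim, you would only bound the kernel of the permutation action, not kill it. The paper's mechanism is different and is the essential missing ingredient: one passes to the \'etale cover $Y_\mW\to Y$ classified by $\ker\rho_\mW$, where the sections of $W$ become \emph{global} closed $1$-forms on $X_\mW$ (Proposition \ref{localglobal} --- this step is also needed to legitimately apply Theorem \ref{cas2}, which you elide by applying Castelnuovo--de Franchis directly ``on $F$''). Theorem \ref{castmassey} then produces a single map $h\colon X_\mW\to Z$ with one fixed target, the elements of $\sK$ are the restrictions $k_g=h\circ g|_{F_0}$ for deck transformations $g$, and Lemma \ref{formula1} gives the identity $k_g^*(\alpha)=g^{-1}k_e^*(\alpha)$ relating pullbacks to the monodromy action. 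Faithfulness (Lemma \ref{faithful}) is then immediate: if $g$ fixes every $k_{g'}$ then $g$ fixes every element of every translate $g'^{-1}W$, hence acts trivially on $\widehat W$, hence $g=e$ in $G_\mW$. The same identity is what justifies your final displayed formula for the stalk.

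Two smaller points. First, working with a single target $Z$ also makes the finiteness of $\sK$ the clean statement of \cite{kob} (finitely many surjective morphisms $F_0\to Z$ for a \emph{fixed} $Z$ of general type); your version with varying targets $Z_\gamma$ needs the stronger de Franchis--Severi type finiteness up to isomorphism of targets, plus an identification of isomorphic targets. Second, the general type property of $Z$ is part of the conclusion of Theorem \ref{cas2}, not a consequence of maximality of $W$; maximality is what gives the equality $W=k_e^*H^0(Z,\Omega^1_Z)$, which you do need both for the description of the stalk and to run the faithfulness argument.
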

See Theorem \ref{monfin}.

As a corollary, see Corollary \ref{semi}, we obtain the following result on the monodromy of $\mD$ and $\mD^{d}$
\begin{cor}
If $\mD$ is Massey trivial generated by a strict subspace, then its monodromy group is finite.
If furthermore the map $\bigwedge^{d}\mD\to \mD^{d}$ is surjective, the local system $\mD^{d}$ also has finite monodromy. 
\end{cor}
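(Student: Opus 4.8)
The plan is to derive both assertions from Theorem \ref{monfin} (Theorem \ref{A}) together with the functoriality of monodromy, so that the corollary becomes essentially formal once that theorem is available.

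For the first assertion, recall that by Definition \ref{mastrivgen} the hypothesis that $\mD$ is Massey trivial generated by a strict subspace means that the general fiber of $\mD$ is generated under the monodromy action by some strict Massey trivial subspace $W\subseteq \Gamma(A,\mD)$, with $A$ a contractible open subset. Since $\Gamma(A,\mD)\cong \mD_y$ is finite dimensional, I would choose among the strict Massey trivial subspaces containing $W$ one, say $W'$, of maximal dimension. I would then observe that $W'$ is automatically a \emph{maximal} strict Massey trivial subspace of $\Gamma(A,\mD)$: any strict Massey trivial subspace strictly containing $W'$ would also contain $W$, contradicting the choice of $W'$. Because $W\subseteq W'$, the local system $\mW'\leq\mD$ generated by $W'$ contains the local system generated by $W$, which by hypothesis is all of $\mD$; hence $\mW'=\mD$. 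Applying Theorem \ref{monfin} to $\mW'$ then gives that $G_{\mW'}=G_\mD$ is finite.

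For the second assertion, I would use that the wedge product of closed relative $1$-forms is a closed relative $d$-form, so that $\bigwedge^{d}\mD\to \mD^{d}$ is a morphism of local systems, i.e. it intertwines the two monodromy representations. The action of $\pi_1(Y,y)$ on $\bigwedge^{d}\mD$ is the one functorially induced from its action on $\mD$, so it factors through $G_\mD$ via the homomorphism $GL(\mD_y)\to GL(\bigwedge^{d}\mD_y)$ sending $g\mapsto \bigwedge^{d}g$; since $G_\mD$ is finite by the first part, the monodromy image on $\bigwedge^{d}\mD$ is finite as well. Finally, because $\bigwedge^{d}\mD\to \mD^{d}$ is surjective and equivariant, $\mD^{d}$ is a quotient representation of $\bigwedge^{d}\mD$, so the kernel of the monodromy on $\bigwedge^{d}\mD$ is contained in the kernel of the monodromy on $\mD^{d}$; the latter image is therefore a quotient of the former and hence finite.

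The only genuinely delicate point is the passage in the first part from the given generating strict Massey trivial subspace $W$ to a maximal one $W'$ without losing the property of generating $\mD$, and the argument above shows this is automatic from maximality together with the inclusion $W\subseteq W'$. Everything else reduces to the standard functoriality of monodromy representations under $\bigwedge^{d}$ and under surjections of local systems, so I expect no further obstacle beyond what is already contained in Theorem \ref{monfin}.
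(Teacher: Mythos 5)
Your proposal is correct and follows essentially the same route as the paper, which simply declares the first assertion immediate from Theorem \ref{monfin} and derives the second from the surjection $\bigwedge^{d}\mD\to\mD^{d}$ (the paper says the monodromy group of $\mD^{d}$ is a subgroup of that of $\bigwedge^{d}\mD$, where your "quotient" phrasing is the more precise one, but both yield finiteness). Your extra step enlarging the generating strict Massey trivial subspace $W$ to a maximal one $W'$ while preserving $\mW'=\mD$ is a worthwhile detail that the paper glosses over, since Theorem \ref{monfin} is stated for maximal subspaces.
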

See Corollary \ref{hyper} for an example where this is applied.

 Another result on the monodromy of $\mD^d$ and its local subsystems comes from the notion of $p$-strictness.
Let $X$ be a smooth variety, $w_1,\dots, w_l \in H^0 (X,\Omega^p_X)$, $l\geq p+1$, be linearly independent $p$-forms such that $w_i\wedge w_j=0$ (as an element of $\bigwedge^2\Omega^p_X$ and not of $\Omega_X^{2p}$) for any choice of $i,j=1,\dots, l$. These forms generate a subsheaf of $\Omega^p_X$ generically of rank $1$. Note that the quotients $w_i/w_j$ define a non-trivial global meromorphic function on $X$ for every $i\neq j$, $i,j=1,\dots, l$. By taking the differential $d (w_i/w_j)$ we then get global meromorphic $1$-forms on $X$. We assume that there exist $p$ of these meromorphic differential forms $d (w_i/w_j)$ that do not wedge to zero; if this is the case we call  the subset $\{w_1,\dots, w_l \}\subset H^0 (X,\Omega^p_X)$ $p$-\textit{strict}.  This notion is of course in some sense the generalization for $p$-forms of the notion of strictness seen above. It actually allows to prove a version of the Castelnuovo-de Franchis theorem for $p$-forms, with $p>1$, see \cite[Theorem 7.2]{RZ5}. This in turn gives the following result.

\begin{thm}
	Let $W=\langle\eta_1,\dots,\eta_l\rangle<\Gamma(A,\mD^d)$ and assume that the sections $\eta_{i}$ are $d$-strict and admit liftings $s_i\in \Gamma(A,f_*\Omega_{X,d}^d)$ with $s_i\wedge s_j=0$ for every choice of $i,j$. Then $W$ generates a local system $\mW\leq\mD^d$ with finite monodromy group $G_\mW$.
\end{thm}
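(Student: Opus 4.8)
The plan is to run the argument of Theorem \ref{monfin} (our Theorem \ref{A}) with its two inputs replaced: the strict Massey trivial subspace of $\mD$ is replaced by the $d$-strict subspace $W\leq\mD^d$ together with its compatible liftings, and the classical Castelnuovo--de Franchis Theorem \ref{cas2} is replaced by its $p$-form analogue \cite[Theorem 7.2]{RZ5} with $p=d$. Thus the whole finiteness mechanism of Section \ref{sez5} is re-used, only the geometric source of the maps $k\colon F\to Z$ changes.

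First I would pass to the contractible open set $A$, over which $\mD^d$ is trivial, so that $W$ is identified with a fixed subspace of the stalk and $\rho$ denotes the monodromy representation of $\pi_1(Y,y)$. Restricting to a general fiber $F=F_y$ (of dimension $d$), each $\eta_i$ becomes a holomorphic top form $\eta_i|_F\in H^0(F,\Omega^d_F)$, and the hypothesis that the $\eta_i$ are $d$-strict is exactly the statement that $\{\eta_1|_F,\dots,\eta_l|_F\}$ is a $d$-strict subset of $H^0(F,\Omega^d_F)$ in the sense recalled above. The liftings $s_i\in\Gamma(A,f_*\Omega_{X,d}^d)$ with $s_i\wedge s_j=0$ are the $d$-form counterpart of the closed liftings used in the family construction of \cite{PT} and \cite{RZ4}: they ensure that the fiberwise data vary holomorphically in $y$ and are carried into one another by the monodromy of $\mD^d$.

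The core step is to feed the $d$-strict forms $\eta_i|_F$ into \cite[Theorem 7.2]{RZ5}, which yields a surjective morphism $k\colon F\to Z$ onto a normal variety $Z$ of general type such that $\eta_i|_F\in k^*H^0(Z,\Omega^d_Z)$ for every $i$. Performing the same construction for each monodromy translate $\rho(\gamma)W$, $\gamma\in\pi_1(Y,y)$, produces a set $\sK$ of such morphisms on which $\pi_1(Y,y)$, and hence $G_\mW$, acts; exactly as in Theorem \ref{monfin} one checks that the subspace $\sum_{k\in\sK}k^*H^0(Z,\Omega^d_Z)$ is finite dimensional and monodromy invariant, so that $W$ indeed generates a local subsystem $\mW\leq\mD^d$ whose fiber is this sum.

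Finally I would bound $\sK$ and verify faithfulness. Since the source $F$ is fixed and every target $Z$ is of general type, the de Franchis--Severi finiteness theorem for surjective morphisms onto varieties of general type forces $\sK$ to be finite. The action of $G_\mW$ on $\sK$ is faithful: by the description of the fiber, an element fixing every $k\in\sK$ fixes each summand $k^*H^0(Z,\Omega^d_Z)$ and therefore acts trivially on the stalk of $\mW$; since $G_\mW$ acts faithfully on this stalk by its very definition, such an element must be trivial. Hence $G_\mW$ embeds into the symmetric group on the finite set $\sK$ and is finite. I expect the main difficulty to lie in the central step, namely guaranteeing that the fiberwise morphisms produced by \cite[Theorem 7.2]{RZ5} assemble into a single family permuted by the monodromy. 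This is precisely where the hypothesis $s_i\wedge s_j=0$ on the global closed liftings is indispensable, and where the compatibility arguments of \cite{PT} and \cite{RZ4} must be transported from $1$-forms to $d$-forms.
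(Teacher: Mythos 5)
Your overall strategy is the paper's: replace the classical Castelnuovo--de Franchis input of Theorem \ref{monfin} by its $p$-form analogue (Theorem \ref{pcast}) with $p=d$, and rerun the finiteness machinery of Section \ref{sez5}. But there is a genuine gap exactly at the step you yourself flag as the ``main difficulty'', and it is not a technicality to be transported from \cite{PT}, \cite{RZ4} --- it is the content of the proof. You apply Theorem \ref{pcast} to the restrictions $\eta_i|_F$ on a single fiber $F$. On the $d$-dimensional fiber the hypothesis $w_i\wedge w_j=0$ in $\bigwedge^2\Omega^d_F$ is vacuous ($\Omega^d_F=\omega_F$ is a line bundle), so a fiberwise application uses none of the hypotheses on the liftings and produces, for each fiber and each monodromy translate, an a priori unrelated rational map to an a priori unrelated target. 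You then have no well-defined action of $G_\mW$ on a single set $\sK$ of maps to a fixed $Z$, no analogue of the formula $k_g^*(\alpha)=g^{-1}k_e^*(\alpha)$ of Lemma \ref{formula1}, and hence no faithfulness argument; your faithfulness paragraph silently assumes all of this structure.

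The paper closes the gap by applying Theorem \ref{pcast} not to $F$ but to the total space $X_\mW$ of the family pulled back to the covering $Y_\mW$ trivializing $\mW$: the global closed liftings $s_i\in\Gamma(A,f_*\Omega^d_{X,d})$ satisfy $s_i\wedge s_j=0$ in $\bigwedge^2\Omega^d_X$ (now a nontrivial condition, since $\Omega^d_X$ has rank $\binom{n}{d}>1$) and inherit $d$-strictness from the $\eta_i$; closedness is what allows the theorem to be used on the non-compact $X_\mW$. This yields a \emph{single} rational map $h\colon X_\mW\dashrightarrow Z$ to a $d$-dimensional variety of general type, and the $k_g$ are the restrictions of $h\circ g$ to $F_0$ --- dominant \emph{meromorphic} maps, not surjective morphisms as you assert (Theorem \ref{pcast} only gives a rational map), though Kobayashi--Ochiai \cite{kob} still gives finiteness of $\hat{\sK}$ for dominant meromorphic maps onto a variety of general type. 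With the maps produced this way, the equivariance and faithfulness arguments of Lemmas \ref{formula1} and \ref{faithful} adapt verbatim and the conclusion follows. So: keep your outline, but move the application of the $d$-form Castelnuovo--de Franchis from the fiber to the total space; that is precisely where the hypotheses $s_i\wedge s_j=0$ and $s_i$ closed are consumed.
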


Recall that the finiteness of the monodromy group of a local system is equivalent to the semi-ampleness of the unitary flat vector bundle, see for example \cite[Theorem 2.5]{CD1}, hence since $\mD^{d}$ is the local system associated to $\sU$, these results are indeed a tool to study the semi-ampleness of $\sU$.

When working with Massey products, both in the above mentioned literature and here in this paper up until this point, a crucial step consists in taking a wedge of $d+1$ differential forms on $X$, that is exactly one above the dimension of the fiber $F$. In Section \ref{masseybig} we consider a wedge product of $k>d+1$ sections instead. We point out that in the usual context of Massey products of \cite{RZ1}, \cite{PT}, \cite{RZ4}, that is when the base $Y$ is a curve, we have that $\dim X=d+1=n$. Hence this analysis is only meaningful when $Y$ is not a curve, and it nicely fits into the scope of this paper. 

We find the natural definition of $k$-Massey products and $k$-Massey triviality, see Definitions \ref{kmas} and \ref{kmt}, and we show that this theory is once again very closely related to the theory of the Castelnuovo-de Franchis Theorem, see Theorem \ref{castmassey2}.

We prove that $k$-Massey triviality gives a result similar to Theorem \ref{A} if we further assume that the covering $Y_\mW\to Y$ associated to the monodromy group $G_\mW$ is such that $Y_\mW\subset \overline{Y_\mW}$ is a Zarisky open subset of a compact variety.
\begin{thm}
	Let $f \colon X \to Y$ be a semistable fibration and let $\mW\leq\mD$ be a local system generated by a strict $k$-Massey trivial subspace. Assume also that $Y_\mW\subset \overline{Y_\mW}$ is a Zarisky open subset of a compact variety.
	Then the associated monodromy group $G_\mW$ is finite.
\end{thm}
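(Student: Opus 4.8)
The plan is to run the argument of Theorem \ref{monfin} (Theorem \ref{A}) in this new setting: instead of estimating $G_\mW$ directly, I would produce a faithful action of $G_\mW$ on a set $\sK$ of surjective morphisms onto varieties of general type, and then conclude by the classical finiteness of such morphisms. The conceptual difference with the $(d+1)$-case is that, since $k>d+1$ exceeds the dimension of the fibre $F$, the wedge $s_1\wedge\cdots\wedge s_k$ vanishes on $F$ and the geometry it encodes no longer lives on the compact fibre but genuinely on the total space. Consequently the relevant Castelnuovo--de Franchis statement is the $k$-form version Theorem \ref{castmassey2}, and the morphisms in $\sK$ have a noncompact source; this is exactly the place where the hypothesis on $\overline{Y_\mW}$ becomes indispensable, whereas in Theorem \ref{A} compactness was automatic because $F$ is compact.

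First I would pass to the covering $Y_\mW\to Y$ cut out by $\ker\rho_\mW$, on which $\mW$ is the constant local system of fibre $W$, and pull the family back to $X_\mW:=X\times_Y Y_\mW\to Y_\mW$. By the description of $\mD$ recalled in the introduction, the sections in $W$ lift to honest \emph{closed} holomorphic $1$-forms on $X_\mW$, and strict $k$-Massey triviality provides precisely the wedge relations and nondegeneracy needed to apply Theorem \ref{castmassey2}. That theorem then yields a surjective morphism $\psi\colon X_\mW\to Z$ onto a normal variety $Z$ of general type, with $W\subseteq\psi^{*}H^{0}(Z,\Omega^1_Z)$. I would take $\sK$ to be the collection of the Castelnuovo--de Franchis morphisms obtained in this way from the $G_\mW$-translates of $W$ inside the stalk of $\mW$; since $G_\mW$ acts on that stalk preserving $\mD$ and the $k$-Massey triviality condition, it permutes $\sK$.

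The two facts to check are then faithfulness of this permutation action and finiteness of $\sK$. Faithfulness is the same mechanism as in Theorem \ref{monfin}: if $g\in G_\mW$ fixes $\psi$, then from $\psi\circ g=\psi$ one gets $g^{*}\psi^{*}\omega=\psi^{*}\omega$ for every $\omega\in H^{0}(Z,\Omega^1_Z)$, so $g$ acts trivially on $\psi^{*}H^{0}(Z,\Omega^1_Z)\supseteq W$; since the strict subspace $W$ generates the stalk of $\mW$ under the monodromy, $g$ acts trivially on all of $\mW$ and is the identity of $G_\mW$. For finiteness of $\sK$ I would use the hypothesis that $Y_\mW$ is a Zariski-open subset of a compact variety $\overline{Y_\mW}$: compactifying $X_\mW$ to a smooth projective $\overline{X_\mW}$ over $\overline{Y_\mW}$ and extending each $\psi$ to a rational map $\overline{X_\mW}\dashrightarrow Z$, after a resolution of indeterminacy one obtains morphisms from a \emph{fixed} smooth projective variety onto varieties of general type. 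The de Franchis--Kobayashi--Ochiai finiteness of such morphisms bounds $\sK$, so $G_\mW$ injects into the finite group of permutations of $\sK$ and is therefore finite.

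The main obstacle is the compactification step, and it is precisely the one forced by the hypothesis on $\overline{Y_\mW}$. The delicate verifications are that each Castelnuovo--de Franchis morphism $\psi$ extends across the boundary $\overline{Y_\mW}\setminus Y_\mW$ to a rational map whose image remains of general type (so that the finiteness theorem still applies), and that these extensions are carried into one another by the $G_\mW$-action so that the permutation representation on $\sK$ is well defined on the compactified model. Once the source has been made compact it is independent of $g$ and the de Franchis-type finiteness applies verbatim, and the remainder of the argument closes exactly as in the proof of Theorem \ref{monfin}.
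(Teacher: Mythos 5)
Your proposal follows essentially the same route as the paper's proof: the set you call $\sK$ (the Castelnuovo--de Franchis morphisms $X_\mW\to Z$ produced by Theorem \ref{castmassey2}) is exactly the paper's $\sH$, and you correctly identify the key point that one must act on maps from the total space rather than from the fibre, since for $k>d+1$ the restrictions $F_0\to Z$ cannot be dominant ($\dim Z=k-1>d$), which is precisely why the paper abandons the fibrewise set used in Theorem \ref{monfin}. Two remarks on the details. First, for finiteness the paper cites \cite{kob} together with \cite[Section 2.2]{ZL}, which covers holomorphic maps from a non-compact (quasi-projective) source directly under the stated hypothesis on $\overline{Y_\mW}$; your alternative of compactifying $X_\mW$ over $\overline{Y_\mW}$ and extending each $\psi$ to a rational map also works, but it forces you to verify that the $G_\mW$-action descends compatibly through the resolutions of indeterminacy, a verification the paper's citation sidesteps. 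Second, your faithfulness argument is stated too locally: an element $g$ fixing the single map $\psi=h_e$ acts trivially on $W$, but that alone does not make $g$ trivial on the full stalk $\widehat{W}=\sum_{g'}g'\cdot W$, since the pointwise stabilizer of $W$ need not be normal in $\pi_1(Y,y)$. What is actually needed---and what Lemmas \ref{formula1} and \ref{faithful} supply---is that any $g\neq e$ moves \emph{some} $h_{g'}$, because $h_{g'}^*H^0(Z,\Omega^1_Z)={g'}^{-1}W$ and these translates span $\widehat{W}$; since you explicitly defer to that mechanism, this is a phrasing slip rather than a genuine gap.
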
 
See Theorem \ref{chiusozar}.

Thanks to the relation between Massey triviality and Castelnuovo-de Franchis theorems, we prove a natural generalization of the Adjoint theorem of \cite{PZ}, \cite{RZ1} which gives us information on some extension groups on the fibers. See Theorem \ref{aggiuntanuovo} and Corollary \ref{cork}.

\begin{ackn}
	This work was supported by JSPS-Japan Society for the Promotion of Science (Postdoctoral Research Fellowship, The University of Tokyo), by the IBS Center for Complex Geometry, Daejeon, South Korea and by European Union funds, NextGenerationEU.
\end{ackn}
\section{Semistable Fibrations over higher dimensional base and differential forms}
\label{sez1}

In this paper we consider \emph{semistable fibrations} according to the definition given by Illusie in \cite{Il}, see also \cite{MR}.

Let $X$ and $Y$ be smooth complex compact varieties of dimension $n$ and $m$ respectively and $f\colon X\to Y$ a surjective morphism. Let $D$ be a normal crossing divisor on $X$ and $E$ a normal crossing divisor on $Y$ such that $D=f^{-1}(E)$. Take local coordinates $x_1,\dots,x_n$ on $X$ such that $D$ is locally given by $x_1\ldots x_k=0$ and local coordinates $y_1,\dots,y_m$ on $Y$ such that $E$ is $y_1\ldots y_p=0$.
Under these assumptions, the semistable morphism $f$ is given locally by 
\begin{equation}
f(x_1,\dots,x_n)=(x_1\cdots x_{s_1},x_{s_1+1}\cdots x_{s_2},\dots,x_{s_{p-1}+1}\cdots x_{s_p},x_{s_p+t_1},x_{s_p+t_2},\dots,x_{s_p+t_{m-p}})
\label{semistable}
\end{equation} with $s_p=k$ and $1\leq t_1\leq t_2\leq\dots\leq t_{m-p}\leq n-k$.
In particular the fibers of $f$ are reduced.

As in the case of fibrations over a curve considered in \cite{PT} and \cite{RZ4}, the \emph{relative dualizing sheaf} $\omega_{X/Y}$  and the \emph{relative differentials} $\Omega^p_{X/Y}$ will play an important role in the following. 

Recall that the relative dualizing sheaf $\omega_{X/Y}$ is 
\begin{equation}
\omega_{X/Y}:=\omega_X\otimes f^*\omega_Y^\vee
\label{reldual}
\end{equation} and it is locally free since both $X$ and $Y$ are smooth. Furthermore it is well known that in the case of a semistable fibration its higher direct images $R^pf_*\omega_{X/Y}$ are also locally free on $Y$, see for example \cite{Il}; we will be interested mainly in $f_*\omega_{X/Y}=f_*\omega_X\otimes \omega_Y^\vee$.

The exact sequence 
\begin{equation}
\label{diffrel}
0\to f^*\Omega^1_Y\to \Omega^1_X\to \Omega^1_{X/Y}\to 0 
\end{equation} defines the sheaf of relative differentials $\Omega^1_{X/Y}$. This sheaf is not locally free for a general fibration $f$, but it turns out to be at least torsion free in our setting. This can be easily seen as follows.

Recall that since $X$ is a smooth variety and $D$ is a normal crossing divisor on $X$, given locally by $x_1x_2\cdots x_k=0$, we can define the sheaf $\Omega^1_X(\text{log }D)$ of \emph{logarithmic differentials} as the locally free $\sO_X$-module generated by $dx_1/x_1,\ldots,dx_k/x_k,dx_{k+1},\ldots,dx_{n}$. In the same way we define the locally free $\sO_Y$-module $\Omega^1_Y(\text{log }E)$.

We have an injection 
$
f^*\Omega^1_Y(\text{log }E)\to \Omega^1_X(\text{log }D)
$ with locally free cokernel which is denoted by $\Omega^1_{X/Y}(\text{log})$:
\begin{equation}
\label{logdiffrel}
0\to f^*\Omega^1_Y(\text{log }E)\to \Omega^1_X(\text{log }D)\to \Omega^1_{X/Y}(\text{log})\to 0.
\end{equation}
Now call $D_i$ the irreducible components of $D$ and $E_u$ the irreducible components of $E$. Call $\sO_{D_0}=\oplus_i \sO_{D_i}$ and $\sO_{E_0}=\oplus_u\sO_{E_u}$, then we have the commutative diagram
\begin{equation}
\xymatrix{
&0\ar[d]&0\ar[d]&0\ar[d]&\\
0\ar[r]& f^*\Omega^1_Y\ar[d]\ar[r]&\Omega^1_X\ar[d]\ar[r]&\Omega^1_{X/Y}\ar[r]\ar[d]&0\\
0\ar[r]& f^*\Omega^1_Y(\text{log }E)\ar[d]\ar[r]&\Omega^1_X(\text{log }D)\ar[d]\ar[r]&\Omega^1_{X/Y}(\text{log})\ar[r]\ar[d]&0\\
0\ar[r]& f^*\sO_{E_0}\ar[r]\ar[d] &\sO_{D_0}\ar[r]\ar[d]&\sO_{D_0}/f^*\sO_{E_0}\ar[r]\ar[d]&0\\
&0&0&0&
}
\label{diagrammalog}
\end{equation} see \cite{MR}.

In particular $\Omega^1_{X/Y}$ is torsion free because it is a subsheaf of a locally free sheaf.
The exterior powers $\Omega^p_{X/Y}=\bigwedge^p\Omega^1_{X/Y}$ are also torsion free, together with their direct images $f_*\Omega^p_{X/Y}$.

Finally call $d:=n-m=\dim X-\dim Y$ the relative dimension, we note that $\Omega^{d}_{X/Y}(\textnormal{log})=\omega_X\otimes f^*\omega_Y^\vee=\omega_{X/Y}$ by Sequence (\ref{logdiffrel}).
From this remark and the above Diagram (\ref{diagrammalog}), it is easily seen that we have an injection 
\begin{equation}
\label{injection}
f_*\Omega^{d}_{X/Y}\hookrightarrow f_*\omega_{X/Y}
\end{equation}
which is an isomorphism when restricted to $Y^0=Y\setminus E$. 

\subsection{Two subsheaves of $f_*\Omega^{1}_{X/Y}$}
It turns out that $f_*\Omega^{1}_{X/Y}$ contains two interesting subsheaves. They are studied in details in \cite{PT} and \cite{RZ4} when $Y$ is a curve, here we briefly recall their construction and highlight the main differences with the one-dimensional case.


Take the pushforward of Sequence (\ref{diffrel})
\begin{equation}
\label{seq1}
0\to \Omega^1_Y\to f_*\Omega^1_X\to f_*\Omega^1_{X/Y}\to R^1f_*\sO_X \otimes\Omega^1_Y\to \dots
\end{equation}
We call $F_y=f^{-1}(y)$ the fiber over a point $y\in Y$. 
Over each regular value $y\in Y^0$, Sequence (\ref{seq1}) gives
\begin{equation}
0\to T_{Y,y}^\vee\otimes H^0(\sO_{F_y})\to H^0(\Omega^1_{X|F_y})\to H^0(\Omega^1_{F_y})\stackrel{\delta_{\xi_y}}{\rightarrow} T_{Y,y}^\vee\otimes H^1(\sO_{F_y})\to \dots
\end{equation}
which is the cohomology long exact sequence of 
\begin{equation}
\label{solita}
0\to T_{Y,y}^\vee\otimes\sO_{F_y}\to \Omega^1_{X|F_y}\to \Omega^1_{F_y}\to 0.
\end{equation}
The first sheaf we introduce is denoted by $K_\partial$ and defined as follows:
\begin{defn}
The sheaf $K_\partial$ is the kernel of the map $\partial\colon f_*\Omega^1_{X/Y}\to R^1f_*\sO_X \otimes\Omega^1_Y$ of Sequence (\ref{seq1}).
\end{defn}
We note that, over the general $y\in Y^0$, 
\begin{equation}
\label{kerxi}
K_\partial\otimes \mC(y)= \ker \delta_{\xi_y}
\end{equation} that is, locally, we can think of $K_\partial$ as the sheaf of holomorphic one forms on the fibers of $f$ which are liftable to the variety $X$.
The key property of $K_\partial$ is that this liftability is not only local, as we see in the following lemma, 

\begin{lem}
\label{split}
Consider the exact sequence
\begin{equation}
0\to \Omega^1_Y\to f_*\Omega^1_X\to K_\partial\to 0.
\label{seqK}
\end{equation} 
If $A\subseteq Y$ is an open subset, all the sections of $\Gamma(A,K_\partial)$ can be lifted to $\Gamma(A,f_*\Omega^1_X)$.
\end{lem}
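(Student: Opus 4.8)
The plan is to reduce the assertion to a splitting property of the sequence (\ref{seqK}) and then to establish that splitting. Write $\alpha\colon\Omega^1_Y\to f_*\Omega^1_X$ and $\beta\colon f_*\Omega^1_X\to K_\partial$ for the two maps in (\ref{seqK}). From the long exact cohomology sequence on an open set $A$,
\[
\Gamma(A,f_*\Omega^1_X)\xrightarrow{\beta}\Gamma(A,K_\partial)\xrightarrow{\delta_A}H^1(A,\Omega^1_Y),
\]
a section $\eta\in\Gamma(A,K_\partial)$ lifts to $\Gamma(A,f_*\Omega^1_X)$ precisely when $\delta_A(\eta)=0$. Hence the statement \emph{for every} open $A$ is equivalent to the vanishing of all the connecting maps $\delta_A$, and this follows at once if (\ref{seqK}) splits as a sequence of $\sO_Y$-modules: a sheaf-theoretic section $s\colon K_\partial\to f_*\Omega^1_X$ of $\beta$ lifts every section over every open simultaneously. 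I stress that for $A$ Stein one already has $H^1(A,\Omega^1_Y)=0$ and there is nothing to prove; the entire difficulty is concentrated in non-Stein $A$, and a global splitting is exactly what bypasses it.

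To produce the splitting I would argue fibrewise first and then in families. Restricting a local section of $f_*\Omega^1_X$ to a fibre $F_y$ over $y\in Y^0$ and using Sequence (\ref{solita}) together with $H^0(\sO_{F_y})=\mC$ (fibres are connected), one obtains the exact sequence of vector spaces
\[
0\to T^\vee_{Y,y}\to H^0(\Omega^1_{X|F_y})\to \ker\delta_{\xi_y}\to 0,
\]
whose last term is $K_\partial\otimes\mC(y)$ by (\ref{kerxi}). Each such sequence of vector spaces splits, so the only genuine content is to choose the splittings holomorphically in $y$ and to extend them across the boundary divisor $E$. For this I would pass to the relative logarithmic model (\ref{logdiffrel}) and Diagram (\ref{diagrammalog}): since $f$ is semistable, $\Omega^1_{X/Y}(\text{log})$ and the higher direct images $R^qf_*\Omega^p_{X/Y}(\text{log})$ are locally free and the relative logarithmic Hodge--de Rham spectral sequence degenerates at $E_1$ (Illusie \cite{Il}). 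This degeneration is what I would use to upgrade the pointwise vector-space splittings to an $\sO_Y$-linear section of the pushed-forward logarithmic sequence; restricting to $Y^0=Y\setminus E$, using the isomorphism of (\ref{injection}) there, and then extending across $E$ by normality of $Y$ and torsion-freeness of the sheaves in (\ref{diagrammalog}), yields the required section $s$ of $\beta$.

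The conceptual heart, and the step I expect to be the main obstacle, is the following reformulation of the splitting. The extension class $e\in\mathrm{Ext}^1_{\sO_Y}(K_\partial,\Omega^1_Y)$ of (\ref{seqK}) is the image under $Rf_*$ of the Kodaira--Spencer (Atiyah) class of (\ref{diffrel}), and the connecting map $\partial$ of Sequence (\ref{seq1}) is cup product with that same class. Since by definition $K_\partial=\ker\partial$ is exactly the locus on which this pairing vanishes, one expects $e=0$; making this precise requires tracking the class through the truncation maps of $Rf_*$, and it is precisely here that semistability is indispensable, since it provides both the $E_1$-degeneration and the local freeness of the logarithmic direct images that keep the relevant $\mathrm{Ext}$-computation under control. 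Once $e=0$ the sequence (\ref{seqK}) splits, every $\delta_A$ vanishes, and all sections of $\Gamma(A,K_\partial)$ lift for every open $A$, which is the claim.
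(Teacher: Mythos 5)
Your reduction in the first paragraph is fine as far as it goes: an $\sO_Y$-linear splitting of (\ref{seqK}) would indeed force every $\delta_A$ to vanish. The problem is that the proposal never establishes the splitting, and both mechanisms you offer for it break down. The fibrewise step is circular: fibrewise vector-space splittings exist for \emph{any} short exact sequence of sheaves that are locally free near the relevant points, and the obstruction to choosing them holomorphically over $Y$ is precisely the extension class you are trying to kill, so "the only genuine content is to choose the splittings holomorphically in $y$" restates the problem rather than reducing it; invoking $E_1$-degeneration of the relative logarithmic Hodge--de Rham spectral sequence yields local freeness of Hodge bundles but no mechanism producing a section of $f_*\Omega^1_X\to K_\partial$. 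The "conceptual heart" is a non sequitur: the definition $K_\partial:=\ker\partial$ says only that $f_*\Omega^1_X\to K_\partial$ is surjective as a map of sheaves, i.e.\ that the short exact sequence (\ref{seqK}) \emph{exists}; it carries no information whatsoever about the class of that extension in $\mathrm{Ext}^1_{\sO_Y}(K_\partial,\Omega^1_Y)$. "The cup-product pairing vanishes on $K_\partial$" and "$e=0$" are simply different statements, and you yourself flag this step as unresolved. So the argument is incomplete exactly at its central point.

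Note also that your target is strictly stronger than the lemma and may well be unprovable: liftability over every open is equivalent to the vanishing of the images of all the $\delta_A$, which can happen for non-split extensions. For instance, in the Euler sequence $0\to\sO\to\sO(1)^{\oplus 2}\to\sO(2)\to 0$ on $\mP^1$, every section of $\sO(2)$ lifts over every open (any proper open subset of $\mP^1$ is Stein, and $H^1(\mP^1,\sO)=0$), yet the extension class is non-zero. The paper takes a different and much shorter route that avoids splitting entirely: it composes $H^1(A,\Omega^1_Y)\to H^1(A,f_*\Omega^1_X)$ with the Leray map into $H^1(f^{-1}(A),\Omega^1_X)$, observes that the composite is the pullback along $f$, and proves that this composite is injective because the general fiber is a non-zero cycle in $f^{-1}(A)$ — the argument of \cite[Lemma 7.28]{Vo1}, which applies verbatim since $f$ is proper over $A$. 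Injectivity of the composite forces $\delta_A=0$, which is all the lemma asserts. To salvage your approach you would have to genuinely prove the splitting; as written, its two key steps are assertions.
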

\begin{proof}
	See \cite[Lemma 3.5]{PT} and \cite[Lemma 2.2]{RZ4} for the case $\dim Y=1$; for $\dim Y>1$ the proof is a little more delicate, and we report it here.
	
Restrict Sequence (\ref{seqK}) on the open subset $A$ and note that it is enough to show that 
$$
H^1(A,\Omega^1_Y)\to H^1(A,f_*\Omega^1_X)
$$ is injective.
This comes from the fact that if we compose this map with the  map given by the Leray spectral sequence we obtain
$$
H^1(A,\Omega^1_Y)\to H^1(A,f_*\Omega^1_X)\to H^1(f^{-1}(A),\Omega^1_X)
$$ and it is enough to show that this composition is injective.

When $A=Y$ the maps $H^k(Y,\Omega^1_Y)\to H^k(X,\Omega^1_X)$ are injective for all $k$ by \cite[Lemma 7.28]{Vo1}.
In the case of $A\subset Y$ this remains true because the key point of this Lemma is that the general fiber $f^{-1}(y)$ is a non-zero cycle in $f^{-1}(A)$, hence the argument is the same.
\end{proof}


The second important subsheaf of $f_*\Omega^1_{X/Y}$ is a local system and it is defined as follows.
Consider the holomorphic de Rham complexes on $X$ and on $Y$ respectively:
\begin{equation}
0\to \mC_X\to \sO_X\to \Omega^1_{X}\to \Omega^2_{X}\to \dots\to \Omega^n_{X}\to 0
\label{DRX}
\end{equation} and 
\begin{equation}
0\to \mC_Y\to \sO_Y\to \Omega^1_{Y}\to \Omega^2_{Y}\to \dots\to \Omega^m_{Y}\to 0
\label{DRY}
\end{equation}
Now we compare the direct image of the short exact sequence on $X$
\begin{equation}
0\to \mC_X\to \sO_X\to \Omega^1_{X,d}\to 0
\label{1chiuse}
\end{equation} with the corresponding sequence on $Y$:
\begin{equation}
\xymatrix{
0\ar[r]& f_*\mC_X\ar @{=}[d]\ar[r]&f_*\sO_X\ar @{=}[d]\ar[r]&f_*\Omega^1_{X,d}\ar[r]&R^1f_*\mC_X\ar[r]&R^1f_*\sO_X\ar[r]&\dots\\
0\ar[r]&\mC_Y\ar[r]&\sO_Y\ar[r]&\Omega_{Y,d}^1\ar @{^{(}->}[u]\ar[r]&0
}
\label{dia1}
\end{equation}

\begin{defn}
The sheaf $\sD$ is defined as the cokernel of the vertical map $\Omega_{Y,d}^1\to f_*\Omega^1_{X,d}$. Alternatively by Diagram (\ref{dia1}) it is the image of $f_*\Omega^1_{X,d}\to R^1f_*\mC_X$ or the kernel of the map $R^1f_*\mC_X\to R^1f_*\sO_X$.
\end{defn}
 We have an inclusion of sheaves $\sD\hookrightarrow K_{\partial}$ and we can therefore interpret $\sD$ as the sheaf of  holomorphic one-forms on the fibers of $f$ which are liftable to \textit{closed} holomorphic forms of the variety $X$. 
 
%

\begin{lem}
\label{loc}
The sheaf $\sD$ is a local system.
\end{lem}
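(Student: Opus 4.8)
The plan is to show that $\sD$, which by Diagram (\ref{dia1}) is the kernel of the sheaf map $R^1f_*\mC_X\to R^1f_*\sO_X$, is locally constant of finite rank. I would first treat the smooth locus $Y^0=Y\setminus E$ and then control the behaviour of $\sD$ across the normal crossing divisor $E$ using semistability. Over $Y^0$ the restriction of $f$ is a smooth proper fibration, so $\mathbb{V}:=R^1f_*\mC_X|_{Y^0}$ is the local system underlying the polarized variation of Hodge structure of weight one attached to the fibers $F_y$, with Hodge bundle $F^1\cong f_*\Omega^1_{X/Y}|_{Y^0}$; under this description the map to $R^1f_*\sO_X$ is exactly the projection $\mathbb{V}\otimes\sO_{Y^0}\to (\mathbb{V}\otimes\sO_{Y^0})/F^1$. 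Hence a local flat section of $\mathbb{V}$ lies in $\sD$ precisely when it is of Hodge type $(1,0)$ at every nearby point.

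Second, I would identify $\sD|_{Y^0}$ with the maximal sub--local system of $\mathbb{V}$ on which the Higgs field vanishes. Decomposing the flat connection into its metric and Higgs parts, a flat section contained in $F^1$ pointwise is the same thing as a flat section annihilated by the Higgs field $\theta\colon F^1\to (\mathbb{V}/F^1)\otimes\Omega^1_{Y^0}$. On this subsheaf the polarization restricts to a \emph{flat} Hermitian form, so the monodromy preserving it is unitary; the rank is bounded by $h^0(F,\Omega^1_F)$, and $\sD|_{Y^0}$ is a local system. This is the higher--dimensional analogue of the identification, in the curve case of \cite{PT} and \cite{RZ4}, of $\sD$ with the local system of the unitary flat summand $\sU$ of the Fujita decomposition (\ref{fujitaCK}) of \cite{CK}.

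Finally, I would extend across $E$ using that $f$ is semistable. By \cite{Il} the local monodromy of $R^1f_*\mC_X$ around each component $E_u$ is unipotent; since on $\sD|_{Y^0}$ the same monodromy is unitary by the previous step, it must be trivial around every branch of $E$. Therefore $\sD|_{Y^0}$ extends to a genuine local system $\widetilde{\sD}$ on the whole of $Y$, and it remains to check $\widetilde{\sD}=\sD$ at a point $y_0\in E$. This is where the local invariant cycle theorem enters: for semistable $f$ the stalk $(R^1f_*\mC_X)_{y_0}$ surjects onto the monodromy invariants of the nearby fiber, and a diagram chase in (\ref{dia1}) shows that $\sD_{y_0}$ consists exactly of the invariant classes of type $(1,0)$, matching the stalk of $\widetilde{\sD}$. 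The main obstacle is precisely this last step: proving triviality of the combined unitary and unipotent local monodromy and checking that the kernel description of $\sD$ is compatible, via the limit mixed Hodge structure, with the invariant cycles at the boundary. The analysis over $Y^0$ and the extension itself are comparatively formal once the inputs from the theory of variations of Hodge structure and from semistability are in place.
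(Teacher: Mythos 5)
Your proposal is correct and follows essentially the same route as the paper: show $\sD|_{Y^0}$ is a local system sitting inside $H^{1,0}$ of the fibers, deduce that the local monodromy around $E$ is both unitary (positive-definite Hermitian form) and unipotent (semistability), hence trivial, and then use the local invariant cycle theorem to identify $\sD$ with the extension $j_*j^*\sD$. The only substantive difference is in the last step, where the paper replaces your ``diagram chase at a boundary point'' by the cleaner argument that $\sD\to j_*j^*\sD$ is surjective (from the local invariant cycle theorem applied to $R^1f_*\mC_X$) and injective because its kernel would be a torsion subsheaf of $\sD\hookrightarrow K_\partial$, which is torsion free.
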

\begin{proof}
See \cite[Lemma 4.2]{PT} and \cite[Lemma 2.6]{RZ4} for the case where $Y$ is a curve.

 Call $j\colon Y^0 \to Y$ the inclusion of the locus of regular values of $f$. Since $\sD$ is a subsheaf of $R^1f_*\mathbb C$ by Diagram (\ref{dia1}), its restriction $j^*\sD$ is a subsheaf of the local system $j^*R^1f_*\mathbb C$, and $j^*\sD$ is itself a local system. 

We now check the monodromies of the local system $j^*\sD$ around the branches of $E$: we will see that they are trivial and therefore prove that $j_*j^*\sD$ is a local system.
Since by definition $\sD$ is the kernel of the morphism $R^1f_*\mC_X\to R^1f_*\sO_X$, its stalk over a point $y\in Y^0$ is contained in the kernel of the projection map $H^1(F_y,\mC)\to H^{0,1}(F_y)$ and so it is a vector subspace of $H^{1,0}(F_y)$. Here the standard Hermitian form is positive definite, hence the monodromy representation associated to $j^*\sD$ is unitary flat, furthermore it is unipotent for $f$ semistable as in our assumption, hence it is trivial. This proves that $j_*j^*\sD$ is a local system.

The last step consists in noticing that $  R^1f_*\mC_X\to j_*j^*R^1f_*\mC_X$ is surjective by the local invariant cycle theorem, see for example \cite[Theorem 1.4.1]{Cat}, so the natural map $\sD\to j_*j^*\sD$ is also surjective. Since it is an isomorphism on $Y^0$, it immediately follows that $\sD\to j_*j^*\sD$ is an isomorphism on $Y$ because otherwise the kernel would be a torsion subsheaf of $\sD$, and hence trivial since $\sD\hookrightarrow K_\partial$ and $K_\partial$ is torsion free.
%
\end{proof}

We denote by $\mD$ the local system $\sD$ and we give another useful interpretation of $\mD$.
Note that the crucial point is that $\mD$ is defined as the image of $f_*\Omega^1_{X,d}\to R^1f_*\mC$.
Now if we consider the natural composition 
\begin{equation}
\label{composizione}
\Omega^1_{X,d}\to \Omega^1_{X}\to \Omega^1_{X/Y}
\end{equation}
and we take its pushforward 
\begin{equation}
f_*\Omega^1_{X,d}\to f_*\Omega^1_{X}\to f_*\Omega^1_{X/Y}
\end{equation} we see that the kernel of this composition is exactly $\Omega^1_{Y,d}$ and therefore $\mD$ can also be seen as a subsheaf of $f_*\Omega^1_{X/Y}$.

More precisely, consider 
$$
d_{X/Y}\colon \Omega^1_{X/Y}\to \Omega^2_{X/Y}
$$ the relative de Rham differential, that is the differential along the fibers induced by the usual de Rham differential $d\colon \Omega^1_{X}\to \Omega^2_{X}.$ Denote by $\Omega^1_{X/Y,d_{X/Y}}$ the kernel of $d_{X/Y}$, the sheaf of closed relative differential forms.

Now by the fact that by definition $\mD$ comes from closed differential forms on $X$ we immediately have that not only $\mD\subset f_*\Omega^1_{X/Y}$ but more specifically $\mD\subset f_*\Omega^1_{X/Y,d_{X/Y}}$.

\begin{rmk}We have that $j^*\mD$ is the largest local subsystem of $j^*R^1f_*\mC$ with stalk a subspace of $H^{1,0}(F_y)$ on the general fiber. Indeed, every other subsystem with the same property is contained in the kernel of $R^1f_*\mC_X\to R^1f_*\sO_X$ and so it is contained in $\mD$.
When the relative dimension $d=n-m$ equals 1, this means that $\mD$ is the local system which gives the second Fujita decomposition of $f_*\omega_{X/Y}$, see \cite{CK}. 
\end{rmk}

\subsection{Subsheaves of $f_*\Omega^{d}_{X/Y}$}

At the level of top forms on the fibers we can proceed in a similar way as follows.
As in Sequence (\ref{composizione}) we naturally have 
\begin{equation}
\Omega^d_{X,d}\to \Omega^d_X\to \Omega^d_{X/Y}
\end{equation} and by taking the pushforward 
\begin{equation}
\label{composizione2}
f_*\Omega^d_{X,d}\to f_*\Omega^d_X\to f_*\Omega^d_{X/Y}
\end{equation} we give the following definition.
\begin{defn}
We denote by $\sD^{d}$ the image of the map $f_*\Omega^d_{X,d}\to f_*\Omega^d_{X/Y}$ given by the composition in (\ref{composizione2}).
\end{defn}

Similarly to $\mD$, $\sD^{d}$ can be interpreted as a sheaf of top forms on the fibers which can be lifted to \textit{closed} holomorphic forms on  $X$.
We now show that $\sD^d$ is also a local system, more precisely:
we prove that $j^*\sD^d$ is a local system that can be extended to a local system on the whole base $Y$ and this extension is $\sD^d$. This is very similar to the case of $\mD$ of Lemma \ref{loc}.
\begin{lem}
\label{lem2}
The sheaf $j^*\sD^{d}$ is a local system that trivially extends on $Y$, hence $\sD^{d}$ is a local system on $Y$ which we will denote by $\mD^{d}$.
\end{lem}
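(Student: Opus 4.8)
The plan is to follow verbatim the three-step strategy used for $\mD$ in Lemma \ref{loc}: first realize $j^{*}\sD^{d}$ as a local subsystem of $j^{*}R^{d}f_{*}\mC_{X}$ whose stalks lie in the top Hodge piece of the fibres, then show that its local monodromy around the branches of $E$ is trivial, and finally use the local invariant cycle theorem together with torsion-freeness to extend it over all of $Y$.

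\medskip

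For the first step I would produce the degree-$d$ analogue of the short exact sequence (\ref{1chiuse}) and of Diagram (\ref{dia1}). By the holomorphic Poincar\'e lemma the de Rham complex resolves $\mC_{X}$, so the sheaf of closed $d$-forms satisfies $\Omega^{d}_{X,d}=\mathrm{im}\bigl(d\colon \Omega^{d-1}_{X}\to\Omega^{d}_{X}\bigr)$ and the truncated complex
\begin{equation}
0\to \mC_{X}\to \sO_{X}\to \Omega^{1}_{X}\to\cdots\to\Omega^{d-1}_{X}\to \Omega^{d}_{X,d}\to 0
\end{equation}
is exact. Splitting it into the short exact sequences $0\to\Omega^{p}_{X,d}\to\Omega^{p}_{X}\to\Omega^{p+1}_{X,d}\to 0$ (with $\Omega^{0}_{X,d}=\mC_{X}$) and iterating the connecting homomorphisms of $Rf_{*}$, one obtains a natural morphism $f_{*}\Omega^{d}_{X,d}\to R^{d}f_{*}\mC_{X}$, exactly as the single connecting map $f_{*}\Omega^{1}_{X,d}\to R^{1}f_{*}\mC_{X}$ does in (\ref{dia1}). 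I would then check that over $Y^{0}$ the image of this morphism coincides with $\sD^{d}$: a $d$-closed holomorphic form on $X$ restricts on a fibre $F_{y}$ to a holomorphic top form, which is its own harmonic representative and therefore lies in $F^{d}H^{d}(F_{y},\mC)=H^{d,0}(F_{y})=H^{0}(F_{y},\omega_{F_{y}})$; this identifies the relative form defining $\sD^{d}$ (using that the injection (\ref{injection}) is an isomorphism on $Y^{0}$) with the $(d,0)$-component of the restricted cohomology class. Hence $j^{*}\sD^{d}$ is a local subsystem of $j^{*}R^{d}f_{*}\mC_{X}$ with stalks contained in $H^{d,0}(F_{y})$, and in particular a local system.

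\medskip

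The remaining two steps are then formally identical to those in Lemma \ref{loc}. On $H^{d,0}(F_{y})=F^{d}H^{d}(F_{y})$, which is primitive, the polarizing Hermitian form $\alpha\mapsto i^{d^{2}}\int_{F_{y}}\alpha\wedge\bar\alpha$ is positive definite by the Hodge--Riemann relations, so the monodromy of $j^{*}\sD^{d}$ is unitary; since $f$ is semistable the local monodromy around each branch of $E$ is unipotent, and unitary together with unipotent forces it to be trivial. Therefore $j_{*}j^{*}\sD^{d}$ is again a local system. Finally, the local invariant cycle theorem \cite[Theorem 1.4.1]{Cat} gives the surjectivity of $R^{d}f_{*}\mC_{X}\to j_{*}j^{*}R^{d}f_{*}\mC_{X}$, hence of $\sD^{d}\to j_{*}j^{*}\sD^{d}$; this map is an isomorphism over $Y^{0}$, and since $\sD^{d}\hookrightarrow f_{*}\Omega^{d}_{X/Y}$ with $f_{*}\Omega^{d}_{X/Y}$ torsion free, its kernel would be a torsion subsheaf and thus vanishes. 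So $\sD^{d}\cong j_{*}j^{*}\sD^{d}$ is a local system on all of $Y$, which we denote by $\mD^{d}$.

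\medskip

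I expect the main obstacle to be the first step. In contrast with the degree-$1$ case, where (\ref{dia1}) arises from a single short exact sequence, here the map $f_{*}\Omega^{d}_{X,d}\to R^{d}f_{*}\mC_{X}$ must be built by iterating $d$ connecting homomorphisms through the de Rham resolution, and one has to verify carefully both that its image over $Y^{0}$ is \emph{exactly} $\sD^{d}$ and that these stalks really land in the top Hodge piece $H^{d,0}$, where the polarization is positive definite. Once this identification is secured, the monodromy and extension arguments transport verbatim from Lemma \ref{loc}.
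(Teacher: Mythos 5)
Your proof is correct and follows essentially the same strategy as the paper's: realize $j^{*}\sD^{d}$ as a local system on $Y^{0}$ with stalks in $H^{d,0}(F_{y})$, use positivity of the polarization plus semistability (unitary and unipotent implies trivial) to kill the local monodromy around $E$, and conclude with the local invariant cycle theorem and torsion-freeness. The only difference is that you construct the comparison map $f_{*}\Omega^{d}_{X,d}\to R^{d}f_{*}\mC_{X}$ explicitly via the truncated de Rham complex, whereas the paper outsources this first step to a citation of \cite[Lemma 3.4]{RZ4}; your version is a reasonable unpacking of that reference, not a different route.
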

\begin{proof}
	The sheaf $j^*\sD^{d}$ is the largest local system on $Y^0$ contained in $f_*\Omega^{d}_{X/Y}$ and the proof of this fact is the same as in the case over 1-dimensional base, cf. \cite[Lemma 3.4]{RZ4}.
	
The intersection form on $j^*\sD^{d}$ is, up to constant, strictly positive definite,  hence the
monodromy representation associated to $j^*\sD^{d}$ is unitary flat and unipotent by our semistability assumption, hence it is trivial. This proves that $j_*j^*\sD^{d}$ is a local system.
Exactly as we have seen in Lemma \ref{loc}, $ R^{d}f_*\mC\to j_*j^*R^{d}f_*\mC$ is surjective and we easily deduce the isomorphism $j_*j^*\sD^{d}\cong \sD^{d}$.
\end{proof}

By Lemma (\ref{lem2}) we get that $\mD^{d}$ is the largest local system contained in $f_*\Omega^{d}_{X/Y}$. By (\ref{injection}) we have the inclusion $f_*\Omega^{d}_{X/Y}\hookrightarrow f_*\omega_{X/Y}$ and we can conclude that

\begin{thm}
\label{secfujita}
$\mD^{d}$ is the local system that gives the second Fujita decomposition of $f_*\omega_{X/Y}$, that is 
\begin{equation}
\label{fujitaii}
f_*\omega_{X/Y}=\sU\oplus \sW
\end{equation} with $\sW$ generically ample and $\sU=\mD^{d}\otimes \sO_Y$.
\end{thm}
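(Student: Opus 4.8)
The plan is to identify both summands with the same object: the flat bundle attached to the maximal local subsystem of $R^{d}f_*\mC$ whose stalk over a general point of $Y^{0}$ lies in the top Hodge piece $H^{d,0}(F_y)$. By \cite{CK} the decomposition $f_*\omega_{X/Y}=\sU\oplus\sW$ already exists, with $\sU$ unitary flat and $\sW$ generically ample, so what remains is to show that the local system $\mathbb{U}$ with $\sU=\mathbb{U}\otimes\sO_Y$ coincides with $\mD^{d}$. I would prove the two inclusions $\sU\subseteq\mD^{d}\otimes\sO_Y$ and $\mD^{d}\otimes\sO_Y\subseteq\sU$ separately.

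For the first inclusion, since $\sU$ is unitary flat it has the form $\mathbb{U}\otimes\sO_Y$ for a local system $\mathbb{U}$ sitting inside $f_*\omega_{X/Y}$ as its flat sections. Restricting to $Y^{0}=Y\setminus E$, the injection (\ref{injection}) is an isomorphism, so $\mathbb{U}|_{Y^{0}}$ is a local system contained in $f_*\Omega^{d}_{X/Y}|_{Y^{0}}$, with stalks in $H^{d,0}(F_y)$. By Lemma \ref{lem2}, $j^*\mD^{d}$ is the largest local system contained in $f_*\Omega^{d}_{X/Y}$ over $Y^{0}$, whence $\mathbb{U}|_{Y^{0}}\subseteq j^*\mD^{d}$. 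Because both local systems have unipotent monodromy around the branches of $E$ and extend trivially across $E$ exactly as in Lemma \ref{lem2}, this inclusion propagates to all of $Y$, giving $\sU\subseteq\mD^{d}\otimes\sO_Y$.

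For the reverse inclusion I would show that the composite $\mD^{d}\otimes\sO_Y\hookrightarrow f_*\omega_{X/Y}\twoheadrightarrow\sW$ vanishes, so that $\mD^{d}\otimes\sO_Y$ lands in the summand $\sU$. The key observation is that $\mD^{d}$ defines a sub-variation of Hodge structure of $R^{d}f_*\mC$ that is pure of type $(d,0)$: its stalks lie in $H^{d,0}(F_y)$ and it is monodromy invariant, so the Hodge filtration it carries does not jump and the associated Higgs field is zero. Such a flat, purely $(d,0)$ sub-local-system cannot map nontrivially to the generically ample part $\sW$: with respect to the natural Hodge metric, $\sU$ is the flat (zero-curvature) summand while $\sW$ carries strictly positive curvature, and a Gauss–type curvature identity forces the second fundamental form of the flat subbundle $\mD^{d}\otimes\sO_Y$ into $\sW$ to vanish. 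This is precisely the content of the maximality of $\sU$ in the second Fujita decomposition of \cite{CK}. Combining the two inclusions yields $\sU=\mD^{d}\otimes\sO_Y$.

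The main obstacle is this reverse inclusion, i.e. the vanishing of the projection to $\sW$. I want to emphasise that it cannot be obtained by a slope computation after restriction to a general curve: a numerically flat sheaf may well admit a nonzero morphism into an ample bundle, so no purely numerical contradiction is available. The vanishing is genuinely Hodge-theoretic and rests on $\mD^{d}$ being of pure type $(d,0)$ together with the defining maximality property of the unitary flat summand in \cite{CK}. Once this is established, matching the two extensions across $E$ is routine, since both $\mD^{d}$ and the local system of $\sU$ have already been shown to satisfy $j_*j^{*}(-)\cong(-)$.
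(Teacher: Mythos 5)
Your proposal is correct and follows essentially the same route as the paper, which deduces the theorem in two lines from Lemma \ref{lem2} (maximality of $\mD^{d}$ among local systems contained in $f_*\Omega^{d}_{X/Y}$) together with the injection (\ref{injection}) and the characterization of $\sU$ in \cite{CK} as the maximal unitary flat summand. You simply make explicit the two inclusions and the curvature/maximality argument that the paper leaves implicit.
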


\section{Massey products and local systems}
\label{sezioneaggiunta}

We recall that the notion of Massey products, originally called adjoint forms, for an infinitesimal deformation of an algebraic variety $X$ has been introduced in \cite{CP}, \cite{PZ}, \cite{RZ1}. Here we want to extend this construction on our fibration $f\colon X\to Y$ in such a way that for the general base point $y\in Y$ we obtain the classical construction on the fiber $F_y$; see \cite{PT} and \cite{RZ4} for the case where the base is a curve.
We stress that since the base $Y$ is of arbitrary dimension we need to properly generalize the theory that is built over a one dimensional base.

We now show how to construct the \emph{Massey product} of $d+1$ sections of $K_\partial$.

\subsection{Massey product of $1$-forms}
\label{sezaggiunte}
Given $d+1$ linearly independent sections $\eta_1,\ldots,\eta_{d+1}$ in $\Gamma(A,K_{\partial})$ on a suitable open subset $A\subset Y$, by Lemma \ref{split} we can always find liftings of the wedges $\eta_1\wedge\dots\wedge\widehat{\eta_i}\wedge \dots\wedge\eta_{d+1}$, $i=1,\ldots,{d+1}$, in $\Gamma(A, \bigwedge^{d} f_*\Omega^1_X)$
and then we apply the natural wedge map
\begin{equation}
\bigwedge^{d}f_*\Omega^1_X\to f_*\bigwedge^{d}\Omega^1_X=f_*\Omega^{d}_X
\end{equation}

\begin{defn}
	\label{omegai}
	We call $\omega_i\in \Gamma(A,f_*\Omega^{d}_X)$, $i=1,\ldots,d+1$, the sections corresponding to $\eta_1\wedge\dots\wedge\widehat{\eta_i}\wedge \dots\wedge\eta_{d+1}$ via this construction and $\sW$ the submodule of $\Gamma(A,f_*\Omega^{d+1}_X)$ given by $\langle\omega_i\rangle\otimes \Omega^1_Y$.
\end{defn}We assume that the $\omega_i$ are not all zero when restricted to the general fiber. 

Similarly we lift $\eta_1\wedge\dots\wedge\eta_{d+1}$ in $\Gamma(A, \bigwedge^{d+1} f_*\Omega^1_X)$
and compose with
\begin{equation}
\bigwedge^{d+1}f_*\Omega^1_X\to f_*\bigwedge^{d+1}\Omega^1_X=f_*\Omega^{d+1}_X
\end{equation}
to obtain an element of  $\Gamma(A,f_*\Omega^{d+1}_X)$.

Considering also $\Omega^2_Y\otimes f_*\Omega^{d-1}_X$ as a submodule of $f_*\Omega^{d+1}_X$ we can give the main definition
\begin{defn}
\label{mtrivial}
The \emph{Massey product} of $\eta_1,\ldots,\eta_{d+1}$ is the section in $\Gamma(A,f_*\Omega^{d+1}_X)$ computed from $\eta_1\wedge\dots\wedge\eta_{d+1}$ as described above. We say that the sections $\eta_1,\ldots,\eta_{d+1}$  are \emph{Massey trivial} if their Massey product is contained in the submodule $\sW+\Omega^2_Y\otimes f_*\Omega^{d-1}_X$.
\end{defn}

\begin{rmk}While Massey products depend on the choice of the liftings in Sequence (\ref{seqK}),
it is easy to see that the definition of Massey triviality does not: two different choices differ by 1-forms in $\Omega^1_Y$. 
\end{rmk}
Now consider $s_1,\dots,s_{d+1}$ a choice of liftings of $\eta_1,\ldots,\eta_{d+1}$ via the splitting of Sequence (\ref{seqK}). A key result is the following, see also \cite[Proposition 4.5]{RZ4}.
\begin{prop}
\label{aggzero1}
If $\eta_1,\ldots,\eta_{d+1}$ are Massey trivial we can choose local liftings $\widetilde{s}_1,\dots,\widetilde{s}_{d+1}$ of the $\eta_i$'s such that $\widetilde{s}_1\wedge\dots\wedge\widetilde{s}_{d+1}=0$.
\end{prop}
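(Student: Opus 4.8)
The plan is to track the only freedom we have, namely the choice of lifting, and to expand the wedge product as a polynomial in that freedom. By Sequence~(\ref{seqK}) any two liftings of $\eta_i$ differ by a section of $\Omega^1_Y$, so I write $\widetilde{s}_i=s_i+\alpha_i$ with $\alpha_i\in\Gamma(A,\Omega^1_Y)$. Expanding,
\[
\widetilde{s}_1\wedge\dots\wedge\widetilde{s}_{d+1}=s_1\wedge\dots\wedge s_{d+1}+\sum_{\emptyset\neq T}\pm\Big(\bigwedge_{i\in T}\alpha_i\Big)\wedge\Big(\bigwedge_{j\notin T}s_j\Big).
\]
The terms with $|T|=1$ are $\sum_i\pm\alpha_i\wedge\omega_i$ and hence lie in $\sW=\langle\omega_i\rangle\otimes\Omega^1_Y$, while every term with $|T|\geq 2$ contains a factor $\bigwedge_{i\in T}\alpha_i\in f^*\Omega^{\geq 2}_Y$ and so lies in $\Omega^2_Y\otimes f_*\Omega^{d-1}_X$. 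Thus changing the lifting alters the Massey product exactly by an element of $\sW+\Omega^2_Y\otimes f_*\Omega^{d-1}_X$ (this also reproves the Remark following Definition~\ref{mtrivial}), and the problem becomes: given that the Massey product $S:=s_1\wedge\dots\wedge s_{d+1}$ lies in $\sW+\Omega^2_Y\otimes f_*\Omega^{d-1}_X$, solve $\Phi(\alpha)=-S$ for the correction map $\Phi(\alpha):=\widetilde{s}_1\wedge\dots\wedge\widetilde{s}_{d+1}-S$.

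First I would kill the $\sW$-component. Write $S=w+\tau$ with $w\in\sW$ and $\tau\in\Omega^2_Y\otimes f_*\Omega^{d-1}_X$, using Massey triviality. The linear part of $\Phi$ is $\alpha\mapsto\sum_i\pm\alpha_i\wedge\omega_i$, whose image is all of $\sW$ because the $\omega_i$ are, by Definition~\ref{omegai}, precisely the generators of $\sW$; so I can choose the $\alpha_i$ so that this linear part equals $-w$. After this choice one has $\widetilde{s}_1\wedge\dots\wedge\widetilde{s}_{d+1}\in\Omega^2_Y\otimes f_*\Omega^{d-1}_X$. When $\dim Y=1$ this module is zero, the wedge already vanishes, and we recover the curve case of \cite[Proposition 4.5]{RZ4}.

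For $\dim Y>1$ the residual in $\Omega^2_Y\otimes f_*\Omega^{d-1}_X$ is the genuinely new phenomenon and must still be removed. I would organize this by the finite decreasing filtration $F^\bullet$ of $f_*\Omega^{d+1}_X$ by the images of $f^*\Omega^{\geq p}_Y\wedge f_*\Omega^{d+1-p}_X$; it is finite since $\Omega^p_Y=0$ for $p>\dim Y$ and the relative forms vanish in degree $>d$. The step above places the wedge in $F^2$; I would then correct $\alpha$ successively, at stage $p$ adding a term lying in the kernel of all the lower graded maps (so as not to disturb the pieces already annihilated) and chosen to cancel the class of the current wedge in $\mathrm{gr}^p=F^p/F^{p+1}$, iterating until the filtration is exhausted.

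The main obstacle is exactly the solvability of these higher stages: I must show that the admissible corrections — those preserving the lower graded pieces — surject onto the required class in each $\mathrm{gr}^p$. This is where the structure of the $\omega_i$ as lifted wedges of the $s_j$ enters, through the Koszul-type relations coming from wedging with $s_1,\dots,s_{d+1}$, together with the torsion-freeness of the $f_*\Omega^q_{X/Y}$ established in Section~\ref{sez1} (which lets one upgrade a cancellation holding generically on $Y^0$ to a genuine one after shrinking $A$). A safe alternative, should the direct graded surjectivity prove awkward, is to restrict to a general curve section $C\subset Y$, invoke the one-dimensional case on $f^{-1}(C)$, and then globalize by torsion-free extension across $Y$; in either route it is the passage beyond the first graded piece, invisible over a curve, that carries the real content.
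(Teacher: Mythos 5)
Your first step is exactly right and coincides with the paper's: Massey triviality gives $s_1\wedge\dots\wedge s_{d+1}=\sum_i\sigma_i\wedge\omega_i+\tau$ with $\sigma_i\in\Omega^1_Y$ and $\tau\in\Omega^2_Y\otimes f_*\Omega^{d-1}_X$, and the single correction $\widetilde{s}_i=s_i+(-1)^i\sigma_i$ (your $|T|=1$ terms) moves the wedge into $\Omega^2_Y\otimes f_*\Omega^{d-1}_X$. The gap is in what follows. You treat the residual in $\Omega^2_Y\otimes f_*\Omega^{d-1}_X$ as something that must be cancelled by further corrections, set up an induction over the filtration by $f^*\Omega^{\geq p}_Y\wedge\Omega^{d+1-p}_X$, and then explicitly concede that the solvability of the higher stages is unproven; your fallback via general curve sections is not a proof either (a lifting chosen over $f^{-1}(C)$ whose wedge vanishes does not extend to a lifting over $Y$, and torsion-freeness propagates an identity already holding for a fixed section, not the existence of a good choice of section). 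So the proposal stops exactly where the real content of the statement begins.

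The missing idea is that no further correction is needed: the residual is automatically zero. This is the paper's Lemma \ref{tecnico}: if liftings $\widetilde{s}_1,\dots,\widetilde{s}_{d+1}$ of the $\eta_i$ are such that $\widetilde{s}_1\wedge\dots\wedge\widetilde{s}_{d+1}$ lies in $\Omega^2_Y\otimes f_*\Omega^{d-1}_X$, then it vanishes identically. The reason is that this wedge is \emph{decomposable}: at a point, writing $V$ for the span of the $\widetilde{s}_i$ in the cotangent space, the form $\bigwedge V$ lies in $f^*\Omega^2_Y\wedge\Omega^{d-1}_X$ only if it is already zero or $\dim\bigl(V\cap f^*\Omega^1_Y\bigr)\geq 2$; in the latter case the images $\eta_i$ in $\Omega^1_{X/Y}$ span a space of dimension at most $d-1$ there, so all the $d$-fold wedges $\eta_1\wedge\dots\wedge\widehat{\eta_i}\wedge\dots\wedge\eta_{d+1}$ vanish at that point. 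Since by the standing assumption after Definition \ref{omegai} the $\omega_i$ are not all zero on the general fiber, this second alternative fails on a dense open set, hence the wedge vanishes there and, by continuity, everywhere. So the correct architecture is one linear correction by the $\sigma_i$ followed by this rigidity statement --- not an iteration over the filtration, whose higher graded pieces never actually carry an obstruction.
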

\begin{proof}
Note that the Massey product of $\eta_1,\ldots,\eta_{d+1}$ is $s_1\wedge\dots\wedge s_{d+1}$ and the $\omega_i$ are exactly $\omega_i=s_1\wedge\dots\wedge\widehat{s_i}\wedge \dots\wedge s_{d+1}$.
Hence if $\eta_1,\ldots,\eta_{d+1}$ are Massey trivial then by definition we have that locally
\begin{equation}
\label{aggz1}
s_1\wedge\dots\wedge s_{d+1}=\sum_i \sigma_i\wedge s_1\wedge\dots\wedge\widehat{s_i}\wedge \dots\wedge s_{d+1}+\tau
\end{equation} with $\sigma_i$ 1-forms on $Y$ and $\tau$ a section of $\Omega^2_Y\otimes f_*\Omega^{d-1}_X$.
Defining 
\begin{equation}
\widetilde{s}_i=s_i+(-1)^i\sigma_i
\end{equation} we have that $\widetilde{s}_1\wedge\dots\wedge\widetilde{s}_{d+1}$ is in $\Omega^2_Y\otimes f_*\Omega^{d-1}_X$. Hence the following lemma will conclude the proof. 
\end{proof}
\begin{lem}
\label{tecnico}
If $\widetilde{s}_1,\dots,\widetilde{s}_{d+1}$ are such that $\widetilde{s}_1\wedge\dots\wedge\widetilde{s}_{d+1}$ is an element of $\Omega^2_Y\otimes f_*\Omega^{d-1}_X$ then $\widetilde{s}_1\wedge\dots\wedge\widetilde{s}_{d+1}=0$.
\end{lem}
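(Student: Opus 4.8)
The plan is to argue by contradiction at a general point of $X$, showing that a non-zero decomposable $(d+1)$-form of this shape must have a non-trivial component in the first graded piece of the filtration of $\Omega^{d+1}_X$ induced by $f^*\Omega^1_Y\hookrightarrow\Omega^1_X$, which is incompatible with lying in $\Omega^2_Y\otimes f_*\Omega^{d-1}_X$. The crucial input is the standing assumption following Definition \ref{omegai}, namely that the $\omega_i$ are not all zero on the general fiber; this is exactly what will force the relevant intersection to be one-dimensional, and without it the statement would in fact be false.

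Set $\omega:=\widetilde{s}_1\wedge\dots\wedge\widetilde{s}_{d+1}\in\Gamma(f^{-1}(A),\Omega^{d+1}_X)$ and suppose $\omega\neq 0$, so that $\omega$ is non-zero at a general point $x\in X$, which I take over a general $y\in Y^0$ and at a smooth point of $F_y$. There the restriction of Sequence (\ref{diffrel}) is a short exact sequence of cotangent spaces $0\to H\to\Omega^1_{X,x}\to Q\to 0$, with $H=(f^*\Omega^1_Y)_x$ of dimension $m$, $Q=(\Omega^1_{X/Y})_x$ of dimension $d$, and projection $\pi$ satisfying $\pi(\widetilde{s}_i(x))=\eta_i(x)$. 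Let $V=\langle\widetilde{s}_1(x),\dots,\widetilde{s}_{d+1}(x)\rangle\subseteq\Omega^1_{X,x}$; since $\omega(x)\neq 0$ we have $\dim V=d+1$. First I would use the standing assumption to pin down $\pi(V)$: as some $\omega_i$ is non-zero on $F_y$, at the general $x$ the $d$ relative forms $\eta_j(x)$ with $j\neq i$ are linearly independent in $Q$ and hence span it, so $\pi(V)=Q$ and $\dim\pi(V)=d$. Therefore $\dim(V\cap H)=\dim V-\dim\pi(V)=1$, and I choose a basis of $V$ adapted to this line: a generator $h$ of $V\cap H$ together with $w_1,\dots,w_d\in V$ whose images $\pi(w_1),\dots,\pi(w_d)$ form a basis of $Q$. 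Since a decomposable form is determined by its spanning subspace up to a non-zero scalar, $\omega(x)=c\,h\wedge w_1\wedge\dots\wedge w_d$ with $c\neq 0$.

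Finally I would read off the class of $\omega$ in the first graded piece. Because $\Omega^{d+1}_{X/Y}$ is torsion free of generic rank $\binom{d}{d+1}=0$, it vanishes, so $\omega$ lies automatically in $L^1:=\mathrm{Im}\bigl(f^*\Omega^1_Y\otimes\Omega^d_X\to\Omega^{d+1}_X\bigr)$ and has a well-defined class in $L^1/L^2\cong f^*\Omega^1_Y\otimes\Omega^d_{X/Y}$, where $L^2$ is the image of $f^*\Omega^2_Y\otimes\Omega^{d-1}_X$. Being an element of $\Omega^2_Y\otimes f_*\Omega^{d-1}_X$ as in the statement is precisely the condition that this class vanish at $x$. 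But from the adapted basis the class equals $c\,h\otimes\bigl(\pi(w_1)\wedge\dots\wedge\pi(w_d)\bigr)$, and here $\pi(w_1)\wedge\dots\wedge\pi(w_d)\neq 0$ in $\det Q$ because the $\pi(w_j)$ form a basis of the $d$-dimensional space $Q$, while $h\neq 0$; hence the class is non-zero, a contradiction. Therefore $\omega=0$.

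The main obstacle is conceptual rather than computational: one has to recognize that the assertion genuinely rests on the generic maximality of the rank of the $\eta_i$ along the fibers (the non-vanishing of the $\omega_i$), since this is what keeps $V\cap H$ one-dimensional and produces the non-zero leading term. Once one restricts to a general point over $Y^0$, the remaining verifications—exactness of the cotangent sequence and the identification $L^1/L^2\cong f^*\Omega^1_Y\otimes\Omega^d_{X/Y}$—are routine.
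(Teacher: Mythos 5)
Your proof is correct and is essentially the paper's own argument in coordinate-free form: the paper expands $\widetilde{s}_1\wedge\dots\wedge\widetilde{s}_{d+1}$ in local coordinates as a $(d+1)\times n$ matrix and argues via $(d+1)\times(d+1)$ minors, which is exactly your adapted-basis computation of the class in $L^1/L^2$ at a general point (your version in fact fills in the details behind the paper's ``it is not difficult to see''). You also correctly isolate the non-vanishing of some $\omega_i$ on the general fiber as the essential input, which is precisely what the paper invokes.
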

\begin{proof}
This comes from the fact that the $\widetilde{s}_i$ are lifting of sections on the fibers and that at least one of the $\omega_i$ is non zero. Hence the claim can be verified as follows.
Consider the expression of $\widetilde{s}_1\wedge\dots\wedge\widetilde{s}_{d+1}$ in local coordinates on a suitable open subset of $f^{-1}(A)$. This is a $d+1\times n$ matrix. The hypothesis that $\widetilde{s}_1\wedge\dots\wedge\widetilde{s}_{d+1}$ is an element of $\Omega^2_Y\otimes f_*\Omega^{d-1}_X$ simply means that all the $d+1\times d+1$ minors of this matrix corresponding to the pieces in $\Omega^1_Y\otimes f_*\Omega^{d}_X$ have zero determinant and at this point it is not difficult to see that the determinant of all the other minors vanishes as well.
\end{proof}

\begin{rmk}
	\label{eccoperche}
	Proposition \ref{aggzero1} is actually a characterization of Massey triviality. This will be useful for the generalizations contained in the last section of this paper.
\end{rmk}

An alternative way to locally check Massey triviality on an $A\subset Y$ contractible open subset is the following.

Take local coordinates $y_1,\dots, y_m$ on $A$. For simplicity we denote by $dy$ the wedge product $dy_1\wedge\dots \wedge dy_m$ and by $\widehat{dy_k}$ the wedge of all the $dy_i$ except $dy_k$ so that $\widehat{dy_k}$ is a $m-1=n-d-1$-form. In the same way $\frac{\partial}{\partial y}$ will denote $\frac{\partial}{\partial y_1}\wedge\dots \wedge \frac{\partial}{\partial y_m}$.

Now consider again $s_1,\dots,s_{d+1}$ a choice of liftings of $\eta_1,\ldots,\eta_{d+1}$. The wedge product $s_1\wedge\dots\wedge s_{d+1}\wedge \widehat{dy_k}$ is a top form on $X$, an element of $\Gamma(A,f_*\omega_X)$ to be precise. Taking the contraction with $\frac{\partial}{\partial y}$ we obtain an element of $f_*\omega_{X/Y}$ by the isomorphism $f_*\omega_{X/Y}=f_*\omega_X\otimes \omega_Y^\vee$. The idea is to compare the sections constructed in this way with the $\eta_1\wedge\dots\wedge\widehat{\eta_i}\wedge \dots\wedge\eta_{d+1}$ seen as elements of $f_*\omega_{X/Y}$ via the inclusion (\ref{injection}).

\begin{prop}
\label{masslocale}
The sections $\eta_1,\ldots,\eta_{d+1}$ are Massey trivial if and only if, for every $k$, $s_1\wedge\dots\wedge s_{d+1}\wedge \widehat{dy_k}|_{\frac{\partial}{\partial y}}$ is an element in the submodule of $f_*\omega_{X/Y}$ generated by $\eta_1\wedge\dots\wedge\widehat{\eta_i}\wedge \dots\wedge\eta_{d+1}$.
\end{prop}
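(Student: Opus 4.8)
The plan is to reduce both conditions, by a computation in local coordinates over the smooth locus $Y^0=Y\setminus E$, to a single condition on the coefficients of the ``one-$dy$'' part of the Massey product, and then to extend the resulting equivalence across $E$ by torsion-freeness.

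First I would fix a point of $Y^0$ and choose analytic coordinates adapted to the fibration: relative coordinates $x_1,\dots,x_d$ on the fibers and $y_1,\dots,y_m$ on the base, so that $\Omega^1_X$ has frame $dx_1,\dots,dx_d,dy_1,\dots,dy_m$ and $\Omega^1_{X/Y}$ has frame the images of the $dx_\alpha$. Writing $s_j=\sum_\alpha a_{j\alpha}\,dx_\alpha+\sum_l b_{jl}\,dy_l$, the relative form $\eta_j$ is $\sum_\alpha a_{j\alpha}\,dx_\alpha$. Setting $\Theta:=s_1\wedge\dots\wedge s_{d+1}$, I would decompose $\Theta$ according to the number of $dy$-factors. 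Since the fibers have dimension $d$, every $(d+1)$-form carries at least one $dy$, so $\Theta=\Theta_1+\Theta_{\ge 2}$, where $\Theta_{\ge 2}\in\Omega^2_Y\otimes f_*\Omega^{d-1}_X$ and $\Theta_1=\sum_{l}dy_l\wedge\Phi_l$ with each $\Phi_l$ a relative $d$-form, i.e. a section of $f_*\Omega^d_{X/Y}$, which I regard in $f_*\omega_{X/Y}$ via (\ref{injection}). Note that the pure $dx$-part of $\omega_i=s_1\wedge\dots\wedge\widehat{s_i}\wedge\dots\wedge s_{d+1}$ is exactly $\eta_1\wedge\dots\wedge\widehat{\eta_i}\wedge\dots\wedge\eta_{d+1}$, the image of $\omega_i$ in $f_*\omega_{X/Y}$.

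The computational heart is the identity
\[
s_1\wedge\dots\wedge s_{d+1}\wedge\widehat{dy_k}\big|_{\frac{\partial}{\partial y}}=\pm\,\Phi_k
\quad\text{in }f_*\omega_{X/Y}.
\]
Indeed, wedging with $\widehat{dy_k}$ (which already contains every $dy_l$ with $l\neq k$) kills $\Theta_{\ge 2}$ and every summand $dy_l\wedge\Phi_l$ of $\Theta_1$ with $l\neq k$, leaving $\pm\,\Phi_k\wedge dy$; contracting with $\frac{\partial}{\partial y}$ and using $f_*\omega_{X/Y}=f_*\omega_X\otimes\omega_Y^\vee$ recovers $\pm\,\Phi_k$. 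On the other hand, Massey triviality means $\Theta\in\langle\omega_i\rangle\otimes\Omega^1_Y+\Omega^2_Y\otimes f_*\Omega^{d-1}_X$. Reducing modulo the subsheaf $\Omega^2_Y\otimes f_*\Omega^{d-1}_X$ replaces each $\omega_i$ by its pure $dx$-part $\eta_1\wedge\dots\wedge\widehat{\eta_i}\wedge\dots\wedge\eta_{d+1}$, so this says $\Theta_1\in\langle\eta_1\wedge\dots\wedge\widehat{\eta_i}\wedge\dots\wedge\eta_{d+1}\rangle\otimes\Omega^1_Y$. Since $dy_1,\dots,dy_m$ is an $\sO_Y$-frame of $\Omega^1_Y$ on all of $A$ ($Y$ being smooth), I can equate the $dy_k$-coefficients and obtain that this is equivalent to each $\Phi_k$ lying in the submodule generated by the $\eta_1\wedge\dots\wedge\widehat{\eta_i}\wedge\dots\wedge\eta_{d+1}$, which by the displayed identity is exactly the asserted condition on the contractions.

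For the forward direction this argument is in fact global: contracting the defining equation $\Theta=\sum_i\sigma_i\wedge\omega_i+\tau$ of Massey triviality directly against $\widehat{dy_k}$ and $\frac{\partial}{\partial y}$ annihilates $\tau$ and the $dy$-parts of the $\omega_i$, and produces $\Phi_k=\sum_i\pm c_{ik}\,\eta_1\wedge\dots\wedge\widehat{\eta_i}\wedge\dots\wedge\eta_{d+1}$ with the $c_{ik}\in\sO_Y(A)$ the (regular) coefficients of $\sigma_i$. For the converse I would reconstruct the $\sigma_i$ from a membership $\Phi_k=\sum_i c_{ik}\,\eta_1\wedge\dots\wedge\widehat{\eta_i}\wedge\dots\wedge\eta_{d+1}$ and verify Massey triviality over $Y^0$. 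The main obstacle, and the only point where care is needed, is passing from $Y^0$ to all of $A$: the coordinate decomposition and the identification $f_*\Omega^d_{X/Y}\cong f_*\omega_{X/Y}$ are only valid off $E$, so I would conclude by the torsion-freeness arguments already used for $K_\partial$ and for $f_*\Omega^\bullet_{X/Y}$ in Section \ref{sez1}: the difference $\Theta-\sum_i\sigma_i\wedge\omega_i$ is a global section of $f_*\Omega^{d+1}_X$ lying in the subsheaf $\Omega^2_Y\otimes f_*\Omega^{d-1}_X$ over the dense open $Y^0$, and torsion-freeness of the relevant quotient forces it to lie there over all of $A$.
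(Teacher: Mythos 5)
Your proposal is correct and follows essentially the same route as the paper: both reduce the statement to extracting the coefficient of $dy_k$ in the ``one-$dy$'' part of $s_1\wedge\dots\wedge s_{d+1}$ via the operations $\wedge\,\widehat{dy_k}$ and $|_{\frac{\partial}{\partial y}}$, and reconstruct the forms $\sigma_i=\sum_k(-1)^{k-1}f^k_i\,dy_k$ for the converse. The only difference is presentational: you make the adapted-coordinate decomposition $\Theta=\Theta_1+\Theta_{\geq 2}$ and the extension across $E$ by torsion-freeness explicit, whereas the paper performs the same coefficient comparison directly on $A$ and leaves the last step (a $(d+1)$-form killed by all $\widehat{dy_k}$ lies in $\Omega^2_Y\otimes f_*\Omega^{d-1}_X$) implicit, in the spirit of Lemma \ref{tecnico}.
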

\begin{proof}
As we have seen in Proposition \ref{aggzero1}, if $\eta_1,\ldots,\eta_{d+1}$ are Massey trivial then by definition we have that 
\begin{equation}
s_1\wedge\dots\wedge s_{d+1}=\sum_i \sigma_i\wedge s_1\wedge\dots\wedge\widehat{s_i}\wedge \dots\wedge s_{d+1}+\tau
\end{equation} with $\sigma_i$ 1-forms on $Y$ and $\tau$ section of $\Omega^2_Y\otimes f_*\Omega^{d-1}_X$.
Taking the wedge with $\widehat{dy_k}$ followed by contraction with $\frac{\partial}{\partial y}$ immediately gives the first implication.

Vice versa assume that for every $k$ 
\begin{equation}
s_1\wedge\dots\wedge s_{d+1}\wedge \widehat{dy_k}|_{\frac{\partial}{\partial y}}=\sum_i f^k_i \eta_1\wedge\dots\wedge\widehat{\eta_i}\wedge \dots\wedge\eta_{d+1}
\end{equation} with $f^k_i$ local holomorphic functions. We consider this identity at the level of top forms on $X$ via the isomorphism $f_*\omega_{X/Y}=f_*\omega_X\otimes \omega_Y^\vee$, that is we wedge with $dy$,
\begin{equation}
s_1\wedge\dots\wedge s_{d+1}\wedge \widehat{dy_k}=\sum_i f^k_i s_1\wedge\dots\wedge\widehat{s_i}\wedge \dots\wedge s_{d+1}\wedge dy
\end{equation} which means that 
\begin{equation}
(s_1\wedge\dots\wedge s_{d+1}-\sum_i f^k_i s_1\wedge\dots\wedge\widehat{s_i}\wedge \dots\wedge s_{d+1}\wedge dy_{k})\wedge \widehat{dy_k}=0
\end{equation} 
Hence defining 
\begin{equation}
\sigma_i=\sum_k (-1)^{k-1}f^k_i dy_{k}
\end{equation} we get 
\begin{equation}
(s_1\wedge\dots\wedge s_{d+1}-\sum_i \sigma_i\wedge s_1\wedge\dots\wedge\widehat{s_i}\wedge \dots\wedge s_{d+1})\wedge \widehat{dy_k}=0
\end{equation} for every choice of $k$. 
This exactly means that 
\begin{equation}
s_1\wedge\dots\wedge s_{d+1}-\sum_i \sigma_i\wedge s_1\wedge\dots\wedge\widehat{s_i}\wedge \dots\wedge s_{d+1}=\tau
\end{equation} for a certain $\tau$ section of $\Omega^2_Y\otimes f_*\Omega^{d-1}_X$.
\end{proof}

%
%
%


Of course since $\mD$ is a subsheaf of $K_{\partial}$, it makes sense to construct Massey products starting from sections $\eta_i\in \Gamma(A,\mD)$.
Note that in this case the liftings of the $\eta_i$ are closed holomorphic forms on $X$, hence the $\omega_i$ are also closed since they are wedge of closed forms. This means that their restriction to the fibers is in  $\mD^{d}$.


\subsection{Massey triviality of a vector space and strictness}
Let $A\subset Y$ be a contractible open subset and $W\subset \Gamma(A, K_{\partial})$ a vector subspace of dimension at least $d+1$.

We give the following definition
\begin{defn}
\label{mastriv}
We say that $W$ is Massey trivial if any $d+1$-uple of linearly independent sections in $W$ is Massey trivial by Definition \ref{mtrivial}.
\end{defn}

\begin{defn}
\label{strict}
We say that $W$ is strict if the map 
$$
\bigwedge^{d}W\otimes \sO_A\to f_*\omega_{X/Y_{|A}}
$$ is an injection of sheaves.
\end{defn} See \cite[Definition 4.5]{RZ4}

\begin{rmk}\label{remstrict}
%

Note that if $d=1$ and $W$ is a subspace of sections of the local system $\mD$ then $W$ is always strict.
\end{rmk}

\begin{prop}
\label{wedge0}
Let $A\subset Y$ be a contractible open subset and $W\subset \Gamma(A, K_{\partial})$ (resp. $W\subset \Gamma(A, \mD$)) a Massey trivial strict subspace. Then there exists a unique $\widetilde{W}\subset \Gamma(A,f_*\Omega^1_X)$ (resp. $\widetilde{W}\subset \Gamma(A,f_*\Omega^1_{X,d})$ ) lifting $W$ such that the wedge map
$$
\psi\colon \bigwedge^{d+1}\widetilde{W} \to  \Gamma(A,f_*\Omega_{X}^{d+1})
$$ is zero.
\end{prop}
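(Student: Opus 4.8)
The plan is to fix one reference lifting and to describe all liftings of $W$ as a torsor under $\Hom(W,\Gamma(A,\Omega^1_Y))$, then to single out the unique element of this torsor that kills $\psi$. By Lemma \ref{split} there is an $\sO_A$-linear lifting $s\colon W\to\Gamma(A,f_*\Omega^1_X)$, and every other lifting has the form $\widetilde s_j=s_j-\sigma_j$ with $\sigma_j\in\Gamma(A,\Omega^1_Y)$ depending linearly on $\eta_j$. For a $(d+1)$-tuple $T=\{j_0,\dots,j_d\}$ of basis vectors I would decompose the wedge $\widetilde s_{j_0}\wedge\dots\wedge\widetilde s_{j_d}\in\Gamma(A,f_*\Omega^{d+1}_X)$ along the filtration of $\Omega^{d+1}_X$ by powers of $f^*\Omega^1_Y$. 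Its $Y$-degree-$0$ piece lies in $f_*\Omega^{d+1}_{X/Y}$, which vanishes since $\Omega^{d+1}_{X/Y}$ is torsion free and zero on the dense open $f^{-1}(Y^0)$ (a wedge of $d+1$ relative $1$-forms on $d$-dimensional fibers). Hence the first possibly nonzero piece sits in $Y$-degree $1$, and by Lemma \ref{tecnico} once this degree-$1$ piece is made to vanish the whole wedge lies in $\Omega^2_Y\otimes f_*\Omega^{d-1}_X$ and is therefore zero. So the problem reduces to killing, simultaneously for all tuples $T$, the $Y$-degree-$1$ component of the wedge.

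Next I would compute this component explicitly. Writing each reference lift locally as $s_j=v_j+\theta_j$, with $v_j$ the vertical ($dx$-only) part and $\theta_j\in f^*\Omega^1_Y$ the horizontal part, the only contributions of $Y$-degree $1$ come from terms with exactly one factor $\theta$, giving
$$
\sum_{k=0}^{d}(-1)^k\,(\theta_{j_k}-\sigma_{j_k})\otimes\omega_{T\setminus j_k}\in\Gamma\bigl(A,\Omega^1_Y\otimes f_*\Omega^d_{X/Y}\bigr),
$$
where $\omega_{T\setminus j_k}=\eta_{j_0}\wedge\dots\wedge\widehat{\eta_{j_k}}\wedge\dots\wedge\eta_{j_d}$ is the relative top form attached to the $d$-subset $T\setminus\{j_k\}$. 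Here strictness (Definition \ref{strict}) enters: the classes $\omega_S$, as $S$ runs over all $d$-subsets of the basis, are $\sO_A$-linearly independent in $f_*\omega_{X/Y}$ via (\ref{injection}), and since $\Omega^1_Y$ is locally free, hence flat, they remain independent after tensoring with $\Omega^1_Y$. By Definition \ref{mastriv} and Proposition \ref{aggzero1}, Massey triviality of $W$ says precisely that the uncorrected degree-$1$ term $\sum_k(-1)^k\theta_{j_k}\otimes\omega_{T\setminus j_k}$ already lies in the submodule $\Omega^1_Y\otimes V$, where $V\subset f_*\omega_{X/Y}$ is the free $\sO_A$-module with basis $\{\omega_S\}$.

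The heart of the argument — and the step I expect to be the main obstacle — is to show that the $\Omega^1_Y$-valued coefficients produced by Massey triviality can be realized by a single linear map $\sigma\colon W\to\Gamma(A,\Omega^1_Y)$, i.e.\ that the correction attached to $\eta_j$ is independent of the tuple $T\ni j$ in which it is read off. The plan is to extract, from the expansion above together with the $\sO_A$-independence of the $\omega_S$, a well-defined candidate $\sigma_j$ from any tuple containing $j$, and then to verify compatibility across overlapping tuples. This compatibility is a cocycle-type condition, which I would obtain by examining the $(d+2)$-fold wedge $\widetilde s_{j_0}\wedge\dots\wedge\widetilde s_{j_{d+1}}$: both its $Y$-degree-$0$ and $Y$-degree-$1$ components vanish identically (they involve relative forms of degree $>d$), so comparing its $Y$-degree-$2$ component with the products of the individual first-order corrections forces the coefficients coming from two tuples that share a common $d$-face to agree. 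With consistency established, setting $\widetilde s_j=s_j-\sigma_j$ annihilates every degree-$1$ component at once, and Lemma \ref{tecnico} upgrades this to $\psi=0$. The $\mD$-case is identical, the only addition being that the corrections can be taken in $\Omega^1_{Y,d}$, so that the lifts stay closed.

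Finally, uniqueness follows from the same linear-algebra input. If $\widetilde W$ and $\widetilde W'$ both kill $\psi$, their difference is some $\sigma\in\Hom(W,\Gamma(A,\Omega^1_Y))$; subtracting the two vanishing wedges of a tuple $T$ and extracting the $Y$-degree-$1$ part gives $\sum_k(-1)^k\sigma_{j_k}\otimes\omega_{T\setminus j_k}=0$, whence $\sigma_{j_k}=0$ for every $k$ by the independence of the $\omega_S$ after tensoring with the flat sheaf $\Omega^1_Y$. Letting $T$ range over all tuples yields $\sigma=0$, so the lifting is unique.
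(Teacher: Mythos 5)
Your reduction is sound as far as it goes: fixing a reference lifting, observing that the $Y$-degree-$0$ graded piece of any $(d+1)$-fold wedge dies in the torsion-free sheaf $f_*\Omega^{d+1}_{X/Y}$, and invoking Lemma \ref{tecnico} to upgrade the vanishing of the degree-$1$ piece to the vanishing of the whole wedge all match what the paper does (this is essentially Proposition \ref{aggzero1} together with Proposition \ref{masslocale}); your uniqueness argument via the $\sO_A$-independence of the $\omega_S$ is also correct. The gap is exactly where you expect it, in the compatibility of the corrections $\sigma_j^T$ across tuples, and the mechanism you propose does not deliver it. Comparing the $Y$-degree-$2$ component of a $(d+2)$-fold wedge with products of first-order corrections only produces identities among wedge products $\sigma_j^{T}\wedge\sigma_{j'}^{T'}$ in $\Gamma(A,\Omega^2_Y)$, further contaminated by the uncontrolled tails $\tau\in\Omega^2_Y\otimes f_*\Omega^{d-1}_X$ of the $(d+1)$-fold relations. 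Such quadratic wedge identities cannot pin down the individual $1$-forms: if, say, all the $\sigma_j^T$ were multiples of one fixed $1$-form $\alpha$, every term $\sigma\wedge\sigma'$ would vanish identically and your comparison would read $0=0$ no matter how incompatible the coefficients were. Worse, when $\dim Y=1$ one has $\Omega^2_Y=0$ and $\Omega^{d+2}_X=0$, so the degree-$2$ component carries no information at all, yet the statement is still nontrivial there for $\dim W>d+1$.

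The ingredient you are missing is that Definition \ref{mastriv} gives Massey triviality for \emph{every} $(d+1)$-tuple of linearly independent sections of $W$, not only for tuples of basis vectors, and the paper's proof of Proposition \ref{wedge0} leans on this. It runs an induction on $\dim W$: with $t_1,\dots,t_{l-1}$ already wedging to zero, it compares the degree-$1$ relations for the tuple $(\eta_1,\dots,\eta_d,\eta_l)$ and for the polarized tuple $(\eta_1,\dots,\eta_d,\sum_{i=d+1}^{l-1}\eta_i+\eta_l)$; since by induction the two left-hand sides coincide, strictness forces the cross-coefficients to vanish, and this is a \emph{linear} statement obtained at degree $1$, where the independence of the $\omega_S$ actually applies. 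A second polarization, using the tuples $(\eta_s,\eta_2,\dots,\eta_d,\eta_l+\eta_1)$ and $(\eta_s+\eta_1,\eta_2,\dots,\eta_d,\eta_l)$, then shows that the single correction $t_l=s_l-\sum_k b^k\,dy_k$ works simultaneously for all $d$-faces. If you replace your $(d+2)$-wedge cocycle step by this polarization argument, the rest of your write-up goes through.
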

\begin{proof}
We work by induction on the dimension of $W$ and note that for $\dim W=d+1$ this is Proposition \ref{aggzero1}.

%
%
%
So assume now that $\dim W=l>d+1$.

We take a basis $\eta_1,\dots,\eta_l$ of $W$ and by induction we can choose liftings $t_1,\dots,t_{l-1}$ of $\eta_1,\dots,\eta_{l-1}$ such that the map $\psi$ is zero on $\langle t_1,\dots,t_{l-1} \rangle$. Finally we take $s_l$ a lifting of $\eta_l$.


By the hypothesis of Massey triviality for every $d+1$ elements in $W$ and by the interpretation provided by  Proposition \ref{masslocale}, for every $k$ there exists holomorphic $b^k,b'^k$ and $a^k_i, c^k_i$, $i=1,...,d$, such that 
\begin{equation*}
t_1\wedge\dots\wedge t_{d}\wedge s_l\wedge \widehat{dy_k}|_{\frac{\partial}{\partial y}}=\sum^{d}_{i=1}a^k_i \eta_1\wedge\dots\wedge\widehat{\eta_i}\wedge\dots\wedge \eta_{d}\wedge\eta_l+b^k \eta_1\wedge\dots\wedge \eta_{d}
\end{equation*} (this is by Massey triviality of $\eta_1,\dots,\eta_d,\eta_l$) and 
\begin{equation*}
\begin{split}
t_1\wedge\dots\wedge t_{d}\wedge (\sum_{i=d+1}^{l-1}t_i+s_l)\wedge \widehat{dy_k}|_{\frac{\partial}{\partial y}}=\sum^{d}_{i=1}c^k_i \eta_1\wedge\dots\wedge\widehat{\eta_i}\wedge\dots\wedge \eta_{d}\wedge (\sum_{i=d+1}^{l-1}\eta_i+\eta_l)+b'^k \eta_1\wedge\dots\wedge \eta_{d}
\end{split}
\end{equation*} (this is by Massey triviality of $\eta_1,\dots,\eta_d,\sum_{i=d+1}^{l-1}\eta_i+\eta_l$).

Now by the induction hypothesis we have that 
\begin{equation*}
t_1\wedge\dots\wedge t_{d}\wedge s_l\wedge \widehat{dy_k}|_{\frac{\partial}{\partial y}}=
t_1\wedge\dots\wedge t_{d}\wedge (\sum_{i=d+1}^{l-1}t_i+s_l)\wedge \widehat{dy_k}|_{\frac{\partial}{\partial y}}
\end{equation*}
and by the strictness of $W$ we can compare their expressions and obtain $b^k=b'^k$ and $a^k_i=c^k_i=0$.

Therefore 
\begin{equation*}
t_1\wedge\dots\wedge t_{d}\wedge s_l\wedge \widehat{dy_k}|_{\frac{\partial}{\partial y}}=b^k \eta_1\wedge\dots\wedge \eta_{d}
\end{equation*} and now choosing $t_l=s_l-\sum_k b^k dy_k$ we have that 
\begin{equation*}
t_1\wedge\dots\wedge t_{d}\wedge t_l\wedge \widehat{dy_k}|_{\frac{\partial}{\partial y}}=0
\end{equation*} for every $k$, hence $t_1\wedge\dots\wedge t_{d}\wedge t_l$ is also zero as a section of $f_*\Omega_X^{d+1}$ by Lemma \ref{tecnico}.

It remains to prove that given any choice of indices $\{j_1,\dots,j_{d}\}\subset \{1,\dots,l\}$ we have 
\begin{equation*}
t_{j_1}\wedge\dots\wedge t_{j_{d}}\wedge t_l=0.
\end{equation*}

It is enough to show this claim for $t_s\wedge t_2\wedge \dots\wedge t_{d}\wedge t_l$ where $s\neq 1$ since the same procedure can be iterated if necessary to obtain the general statement.

Using $t_1\wedge\dots\wedge t_{d}\wedge t_l=0$ and the induction hypothesis we have the equalities
\begin{equation}
t_{s}\wedge t_2\wedge\dots\wedge t_{{d}}\wedge (t_l+t_1)=t_{s}\wedge t_2\wedge\dots\wedge t_{{d}}\wedge t_l=(t_{s}+t_1)\wedge t_2\wedge\dots\wedge t_{d}\wedge t_l
\end{equation}
Taking the $\wedge \widehat{dy_k}|_{\frac{\partial}{\partial y}}$ on the first equality and using the Massey triviality we obtain
\begin{multline*}
\alpha^k_s\eta_2\wedge\dots\wedge\eta_{{d}}\wedge(\eta_l+\eta_1)+\sum^{d}_{j=2}\alpha^k_j\eta_{s}\wedge\eta_2\dots\wedge\widehat{\eta_{j}}\wedge\dots\wedge\eta_{{d}}\wedge(\eta_l+\eta_1)+\alpha^k_l \eta_{s}\wedge\eta_2\wedge\dots\wedge\eta_{{d}}=\\
=\beta^k_s\eta_2\wedge\dots\wedge\eta_{{d}}\wedge\eta_l+\sum^{d}_{j=2}\beta^k_j\eta_{s}\wedge\eta_2\dots\wedge\widehat{\eta_{j}}\wedge\dots\wedge\eta_{{d}}\wedge\eta_l+\beta^k_l\eta_{s}\wedge\eta_2\wedge\dots\wedge\eta_{{d}}
\end{multline*}
Doing the same for the second equality we have

\begin{multline*}
\beta^k_s\eta_2\wedge\dots\wedge\eta_{{d}}\wedge\eta_l+\sum^{d}_{j=2}\beta^k_j\eta_{s}\wedge\eta_2\dots\wedge\widehat{\eta_{j}}\wedge\dots\wedge\eta_{{d}}\wedge\eta_l+\beta^k_l\eta_{s}\wedge\eta_2\wedge\dots\wedge\eta_{{d}}=\\
=\mu^k_s\eta_2\wedge\dots\wedge\eta_{{d}}\wedge\eta_l+\sum^{d}_{j=2}\mu^k_j(\eta_{s}+\eta_1)\wedge\eta_2\dots\wedge\widehat{\eta_{j}}\wedge\dots\wedge\eta_{{d}}\wedge\eta_l+\mu^k_l (\eta_{s}+\eta_1)\wedge\eta_2\wedge\dots\wedge\eta_{{d}}
\end{multline*}
Where the $\alpha^k$, $\beta^k$ and $\mu^k$ are holomorphic functions on $A$.
Using the strictness as before we immediately find that all these functions vanish, for every $k$, giving us the desired result, again via Lemma \ref{tecnico}.

In the case $W\subset \Gamma(A,\mD)\subset\Gamma(A,K_\partial)$, since by definition of $\mD$ all the liftings of the sections $\eta_i$ are closed forms, we have that $\widetilde{W}\subset \Gamma(A,f_*\Omega^1_{S,d})$.
\end{proof}

We finally relate in some sense the property of being locally Massey trivial and globally Massey trivial.

\begin{prop}
\label{lglob}
Let $W\subset \Gamma(Y, K_{\partial})$ be a strict subspace of global sections of $K_{\partial}$ and let $A\subset Y$ an open contractible subset. If the sections of $W$ are Massey trivial when restricted to $A$ then they are Massey trivial everywhere. 
\end{prop}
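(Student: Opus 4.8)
The plan is to reduce ``Massey trivial everywhere'' to the rigidity of sections of a torsion-free sheaf on the integral variety $Y$: a section vanishing on a nonempty open subset must vanish identically. The bridge is the characterization of Proposition \ref{aggzero1} together with Remark \ref{eccoperche}, by which a $(d+1)$-tuple is Massey trivial exactly when it admits liftings whose wedge is zero, and the observation that a wedge of liftings of \emph{global} sections is a global section of the torsion-free sheaf $f_*\Omega^{d+1}_X$ (the pushforward of the locally free $\Omega^{d+1}_X$ along the dominant morphism $f$ onto the integral base $Y$).

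First I would fix a basis $\eta_1,\dots,\eta_l$ of $W$ and, since $W\subset\Gamma(Y,K_\partial)$, use Lemma \ref{split} with $A=Y$ to choose global liftings $s_1,\dots,s_l\in\Gamma(Y,f_*\Omega^1_X)$; for any index subset the wedges $\Theta=s_{j_1}\wedge\dots\wedge s_{j_{d+1}}$ and the associated $\omega_i$ are then global sections. Applying the local criterion of Proposition \ref{masslocale} fibrewise, Massey triviality of the tuple at a point is the condition that the image of $\Theta$ under the natural projection $f_*\Omega^{d+1}_X\to\Omega^1_Y\otimes f_*\omega_{X/Y}$ lie in the subsheaf $\Omega^1_Y\otimes N$, where $N\subset f_*\omega_{X/Y}$ is the $\sO_Y$-submodule generated by the $\omega_i$. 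By strictness of $W$ on $Y$ (Definition \ref{strict}) the $\omega_i$ are $\sO_Y$-independent, so $N$ has rank $d+1$; passing to its saturation $\widehat{N}$, the sheaf $\Omega^1_Y\otimes(f_*\omega_{X/Y}/\widehat{N})$ is torsion-free. The image of $\Theta$ in it vanishes on $A$ by hypothesis, hence vanishes on all of $Y$ by rigidity, so $\Theta$ lands in $\Omega^1_Y\otimes\widehat{N}$ everywhere.

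The main obstacle is the gap between $\widehat{N}$ and $N$: membership in $\widehat{N}$ is only membership in $N$ away from the degeneracy locus $Z$ of the $\omega_i$, whereas Massey triviality in the sense of Definition \ref{mtrivial} requires a genuine holomorphic decomposition $\Theta=\sum_i\sigma_i\wedge\omega_i+\tau$ with coefficients $\sigma_i\in\Omega^1_Y$. On the dense open $U=Y\setminus Z$, where $N$ is a subbundle and hence saturated, such a decomposition exists and by strictness the $\sigma_i$ are uniquely determined holomorphic forms; by the uniqueness in Proposition \ref{wedge0} they agree with the corrected liftings produced on $A$, so the liftings $\widetilde{s}_i=s_i+\sigma_i$ glue over $U$ and satisfy $\widetilde{s}_{j_1}\wedge\dots\wedge\widetilde{s}_{j_{d+1}}=0$ there by Lemma \ref{tecnico}. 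To finish I would extend the $\sigma_i$ across $Z$ — the delicate point, since along $Z$ the submodule $\sW$ is not a subbundle — so that the $\widetilde{s}_i$ become global liftings; their wedge is then a global section of the torsion-free sheaf $f_*\Omega^{d+1}_X$ vanishing on the nonempty open $U\supseteq A$, hence identically zero, which by Proposition \ref{aggzero1} and Remark \ref{eccoperche} yields Massey triviality of $W$ at every point of $Y$.
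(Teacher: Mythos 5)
Your argument is, at its core, the same as the paper's: take global liftings $s_i$ of a basis of $W$ via Lemma \ref{split}, observe that the resulting Massey products and the $\omega_i$ are then global sections, and propagate the decomposition from $A$ to the rest of $Y$ by unique continuation, with strictness pinning down the coefficients. The paper implements the propagation chart by chart: on a second contractible $A'$ meeting $A$ it completes the $\omega_i$ to a local frame of $f_*\omega_{X/Y}|_{A'}$, expands the contraction $s_1\wedge\dots\wedge s_{d+1}\wedge \widehat{dy'_k}|_{\partial/\partial y'}$ in that frame, compares with the expression on $A$ over $A\cap A'$ to see that the extra coefficients $b'^k_j$ vanish on $A\cap A'$ and hence on all of $A'$ by the identity principle, and then iterates over a cover of $Y$. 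Your reformulation --- kill the class of $\Theta$ in the torsion-free quotient $\Omega^1_Y\otimes(f_*\omega_{X/Y}/\widehat N)$ --- is the same identity-principle step packaged sheaf-theoretically, and it has the merit of dispensing with the cover-and-overlap bookkeeping in one stroke.

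The step you leave unexecuted, extending the $\sigma_i$ across the degeneracy locus $Z$ of the $\omega_i$, is a genuine gap in your write-up as it stands, but it is worth saying precisely where it comes from: the paper's own proof does not confront it either, since the assertion that the $\omega_i$ ``can be completed to a local frame of $f_*\omega_{X/Y}|_{A'}$'' on an \emph{arbitrary} chart $A'$ already presupposes that they are pointwise independent there, i.e.\ that $Z\cap A'=\emptyset$. In effect the paper reads strictness as saying that $N$ is a subbundle, so that $N=\widehat N$ and $Z=\emptyset$; under that reading your argument closes immediately --- the coefficients $a^k_i$ obtained by Cramer's rule in the frame are holomorphic everywhere, the corrected liftings $\widetilde s_i$ are global, and their wedge is a section of the torsion-free sheaf $f_*\Omega^{d+1}_X$ vanishing on a nonempty open set, hence identically zero, giving Massey triviality everywhere by Remark \ref{eccoperche}. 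If instead strictness is taken literally as an injection of sheaves, then $Z$ may be a divisor, the $a^k_i$ are a priori only meromorphic with poles along $Z$, and neither your argument nor the paper's establishes holomorphy of the decomposition at points of $Z\setminus A$. So the obstruction you flag is real, but it is inherited from the (implicitly strengthened) hypothesis rather than introduced by your route; to make either proof airtight one should either add the subbundle condition to the definition of strictness or restrict the conclusion to $Y\setminus Z$.
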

\begin{proof}
Since the dimension of $W$ doesn't play any role, for simplicity we assume that $\dim W=d+1$. Take $\eta_1,\ldots,\eta_{d+1}$ a basis of $W$ and $\widetilde{W}=\langle s_1,\dots,s_{d+1}\rangle < \Gamma(Y,f_*\Omega^1_X)$ a lifting of $W$ via Lemma \ref{split}. The $s_i$'s are global holomorphic 1-forms on $X$. 

By hypothesis there exist $a^k_i$ holomorphic functions on $A$ such that
\begin{equation}
\label{ipotesi1}
s_1\wedge\dots\wedge s_{d+1}\wedge \widehat{dy_k}|_{\frac{\partial}{\partial y}}=\sum^{d+1}_{i=1} a^k_i\eta_1\wedge\dots\wedge\widehat{\eta_i}\wedge\dots\wedge\eta_{d+1}
\end{equation}
Now take $A'\subset Y$ another open contractible subset with nontrivial intersection with $A$ and call $y_i'$ the local coordinates on $A'$.
By the strictness hypothesis we can complete the $\eta_1\wedge\dots\wedge\widehat{\eta_i}\wedge\dots\wedge\eta_{d+1}$ to a local frame of $f_*\omega_{X/Y_{|A'}}$, hence in this frame we have
$$
s_1\wedge\dots\wedge s_{d+1}\wedge \widehat{dy'_k}|_{\frac{\partial}{\partial y'}}=\sum^{d+1}_{i=1} a'^k_i\eta_1\wedge\dots\wedge\widehat{\eta_i}\wedge\dots\wedge\eta_{d+1}+\sum_{j} b'^k_j\tau_j.
$$
By comparing this with (\ref{ipotesi1}) on $A\cap A'$ and using the relations $\partial/\partial y'=\partial y/\partial y'\cdot \partial/\partial y$ and $dy'=\partial y'/\partial y \cdot dy$
we are immediately able to say that the $b'^k_j$ vanish on $A\cap A'$ and hence everywhere on $A'$. This proves the Massey triviality on $A'$.
By iterating on an appropriate cover of $Y$ we are done.  
\end{proof}
By the exact same computations it also follows that the $a^k_i$ and $a'^k_i$ transform as coefficients of a 1-form on $Y$, hence the forms $\sigma_i$ as in the proof of Proposition \ref{masslocale} define global 1-forms.

These results allow us to give a \lq\lq local to global\rq\rq version of Proposition \ref{wedge0}.

\begin{prop}
\label{localglobal}
Let $W\subset \Gamma(Y, K_{\partial})$ a strict subspace of global sections of $K_{\partial}$ and let $A\subset Y$ be an open contractible subset. If the sections of $W$ are Massey trivial when restricted to $A$ then there exist a unique lifting $\widetilde{W}\subset \Gamma(Y, f_*\Omega^1_X)$ such that 
$$
\bigwedge^{d+1}\widetilde{W}\to \Gamma(Y,f_*\Omega_{X}^{d+1})
$$ is zero. If furthermore $W\subset \Gamma(Y, \mD)$ then $\widetilde{W}\subset \Gamma(Y, f_*\Omega^1_{X,d})$.
\end{prop}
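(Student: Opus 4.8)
The plan is to deduce this \lq\lq local to global\rq\rq\ statement from Propositions \ref{lglob} and \ref{wedge0}, the global liftability of Lemma \ref{split}, and the gluing observation recorded just after the proof of Proposition \ref{lglob}. First I would fix a basis $\eta_1,\dots,\eta_l$ of $W$ and, applying Lemma \ref{split} with $A=Y$, choose a global lifting $s_1,\dots,s_l\in\Gamma(Y,f_*\Omega^1_X)$; by the exactness of Sequence (\ref{seqK}) any other lifting of $\eta_i$ differs from $s_i$ by a section of $\Omega^1_Y$, so all admissible corrections live in $\Omega^1_Y$. Since $W$ is Massey trivial on $A$ and consists of global strict sections, Proposition \ref{lglob} upgrades this to Massey triviality on every contractible open subset of $Y$, which frees me to work over an arbitrary contractible open cover $\{A_\alpha\}$ of $Y$.

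On each $A_\alpha$, Proposition \ref{wedge0} yields a unique lifting $\widetilde{W}_\alpha$ of $W|_{A_\alpha}$ with $\bigwedge^{d+1}\widetilde{W}_\alpha=0$; concretely this replaces $s_i$ by $\widetilde{s}_i^\alpha=s_i+\sigma_i^\alpha$ for uniquely determined local $1$-forms $\sigma_i^\alpha\in\Gamma(A_\alpha,\Omega^1_Y)$. The crucial point is that these corrections glue. On an overlap $A_\alpha\cap A_\beta$ both $\sigma_i^\alpha$ and $\sigma_i^\beta$ realize the vanishing of the wedge; because the strictness of $W$ is a global hypothesis (not one tied to contractible opens), the uniqueness argument of Proposition \ref{wedge0}, which rests only on strictness together with Lemma \ref{tecnico}, applies verbatim on each connected component of $A_\alpha\cap A_\beta$ and forces $\sigma_i^\alpha=\sigma_i^\beta$ there. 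This is exactly the content of the observation following Proposition \ref{lglob}, where the defining coefficients are checked to transform as the components of a $1$-form on $Y$. Hence the $\sigma_i^\alpha$ patch to global forms $\sigma_i\in\Gamma(Y,\Omega^1_Y)$.

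Setting $\widetilde{s}_i=s_i+\sigma_i$ produces global sections spanning a lifting $\widetilde{W}\subset\Gamma(Y,f_*\Omega^1_X)$ whose restriction to each $A_\alpha$ is $\widetilde{W}_\alpha$; since $\bigwedge^{d+1}\widetilde{W}$ vanishes on every member of the cover it vanishes as a global section of $f_*\Omega^{d+1}_X$. Uniqueness of $\widetilde{W}$ is then inherited from the local uniqueness in Proposition \ref{wedge0}, as any competing global lifting restricts on each $A_\alpha$ to the unique $\widetilde{W}_\alpha$. For the final assertion, if $W\subset\Gamma(Y,\mD)$ I would take the $s_i$ closed (possible since $\mD$ is the image of classes of closed forms, as in the last paragraph of Proposition \ref{wedge0}); the local part of Proposition \ref{wedge0} makes each $\widetilde{s}_i^\alpha$ closed, so each $\sigma_i^\alpha$ is closed, hence so is the glued $\sigma_i$ and therefore $\widetilde{s}_i=s_i+\sigma_i\in\Gamma(Y,f_*\Omega^1_{X,d})$.

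I expect the main obstacle to be precisely the gluing step. The subtlety is that an overlap $A_\alpha\cap A_\beta$ need not be contractible, so one cannot invoke the \emph{existence} half of Proposition \ref{wedge0} there; what rescues the argument is that the \emph{uniqueness} of the wedge-vanishing lifting uses only the (globally valid) strictness of $W$ and Lemma \ref{tecnico}, both insensitive to the topology of the open set. Making this uniqueness-on-overlaps rigorous, and identifying it with the cocycle-free patching asserted in the observation after Proposition \ref{lglob}, is the heart of the proof.
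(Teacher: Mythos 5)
Your argument is correct and follows the same route the paper intends: the paper leaves this proposition without a displayed proof, deriving it from Proposition \ref{lglob}, the ensuing observation that the coefficients $a^k_i$ transform as components of a $1$-form on $Y$ (so the corrections $\sigma_i$ are global), and the local statement of Proposition \ref{wedge0}. Your write-up simply makes the cover-and-glue step explicit, correctly isolating that the uniqueness on overlaps rests only on strictness and Lemma \ref{tecnico}, so it matches the paper's approach.
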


\begin{rmk}
The last statement will be meaningful when $Y$ is not compact, otherwise global 1-forms on $X$ are closed.  
\end{rmk}
%
%

\section{A relative version of the Castelnuovo-de Franchis Theorem}
\label{sez4}

In \cite[Theorem 5.6]{RZ4} we have shown this slightly different version of the generalized Castelnuovo-de Franchis theorem, cf. \cite[Theorem 1.14]{Ca2} and \cite[Prop II.1]{Ran},

\begin{thm}
\label{cas2}
Let $X$ be a compact K\"ahler manifold  and $w_1,\dots, w_l \in H^0 (X,\Omega^1_X)$ linearly independent 1-forms such that $w_{j_1}\wedge\dots\wedge w_{j_{k+1}}= 0$ for every $j_1,\dots,j_{k+1}$ and that no collection of $k$ linearly independent forms in the span of $w_1,\dots, w_{j_{k+1}}$ wedges to zero. Then there exists a holomorphic map $f\colon X\to Z$ over a normal variety $Z$ of dimension $\dim Z=k$ and such that $w_i\in f^*H^0(Z,\Omega^1_Z)$. Furthermore $Z$ is of general type.
\end{thm}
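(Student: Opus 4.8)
The plan is to read the two hypotheses as statements about the saturated subsheaf of $\Omega^1_X$ generated by the forms, to use the K\"ahler hypothesis to make the associated distribution integrable, and finally to upgrade the resulting foliation to an honest fibration by means of the Albanese map and the structure theory of subvarieties of abelian varieties. First I would set $V=\langle w_1,\dots,w_l\rangle\subseteq H^0(X,\Omega^1_X)$ and consider the evaluation map $V\otimes\sO_X\to\Omega^1_X$. The vanishing $w_{j_1}\wedge\dots\wedge w_{j_{k+1}}=0$ forces the image to have generic rank at most $k$, while the hypothesis that no $k$ independent forms in $V$ wedge to zero forces the generic rank to be exactly $k$. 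Hence the saturation $\sF\subseteq\Omega^1_X$ of this image has generic rank $k$ and is locally free of rank $k$ on the Zariski-open set $U\subseteq X$ where the $w_i$ span a $k$-dimensional subspace of the cotangent space; on $U$ the line $\det\sF_x$ is spanned by the decomposable $k$-vectors $w_{i_1}(x)\wedge\dots\wedge w_{i_k}(x)$.

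Next, since $X$ is compact K\"ahler every $w_i$ is $d$-closed. For holomorphic vector fields $v_1,v_2$ annihilated by all the $w_i$, Cartan's formula gives $w_i([v_1,v_2])=v_1(w_i(v_2))-v_2(w_i(v_1))-dw_i(v_1,v_2)=0$, so the annihilator distribution $D=\{v: w_i(v)=0\ \forall i\}\subseteq T_X$ is involutive on $U$. By the holomorphic Frobenius theorem this is a codimension-$k$ holomorphic foliation whose conormal sheaf is $\sF$.

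The heart of the argument is to realize this foliation by a map. Writing $w_i=a^*\omega_i$ for the Albanese map $a\colon X\to\Alb(X)$, with the $\omega_i$ translation-invariant, I would pass to the subvariety $Y_0=a(X)$ of the abelian variety $\Alb(X)$, onto which the generic rank of $V$ and the isotropy conditions descend. Using the structure theorem of Ueno for subvarieties of abelian varieties, $Y_0$ admits a quotient $q\colon Y_0\to Z_0$ onto a normal variety of general type with $\dim Z_0=k$ to which the $\omega_i$ descend: the stabilizer subtorus of $Y_0$ is precisely the degeneracy locus of the $\omega_i$, and the assumption that no $k$ forms wedge to zero is what pins the base dimension to $k$. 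Composing $X\to Y_0\to Z_0$ and taking the Stein factorization then yields $f\colon X\to Z$ with $Z$ normal, $\dim Z=k$, connected fibers equal to the closures of the leaves of $D$, and $w_i=f^*\eta_i$ for holomorphic one-forms $\eta_i\in H^0(Z,\Omega^1_Z)$. That $Z$ is of general type follows because the $\eta_i$ give $k$ independent forms with $\eta_{i_1}\wedge\dots\wedge\eta_{i_k}\neq 0$, so the induced map to the abelian quotient is generically finite and $K_Z$ is big.

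The step I expect to be the main obstacle is the third one: moving from the abstract involutive foliation, whose leaves need not a priori be closed, to a holomorphic fibration over a normal base of the exact dimension $k$. The delicate points are that a generic-rank-$k$ subspace of $H^0(X,\Omega^1_X)$ need not by itself define a quotient abelian variety, so one is forced to argue through the image of the Albanese map and its stabilizer; that the descended forms must be checked to be genuinely holomorphic on the normal base $Z$; and, above all, that the base dimension is exactly $k$ and not larger, which is exactly where the nondegeneracy hypothesis together with Ueno's theorem becomes indispensable.
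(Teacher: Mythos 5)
You should first be aware that the paper itself contains no proof of Theorem \ref{cas2}: it is quoted from \cite[Theorem 5.6]{RZ4} and ultimately rests on \cite[Theorem 1.14]{Ca2} and \cite[Prop. II.1]{Ran}. Your route --- evaluation map of generic rank $k$, integrability of the annihilator distribution from the $d$-closedness of holomorphic $1$-forms on a compact K\"ahler manifold, then the Albanese map and the structure theory of subvarieties of abelian varieties --- is exactly the route of those sources, so in spirit you are reconstructing the intended proof rather than giving a different one. Your first two steps are correct and standard.

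The genuine gap is in the third step, precisely where you say the main obstacle lies, and what you wrote does not close it. Ueno's theorem produces the quotient of $Y_0=a(X)$ by its stabilizer subtorus $B$, with $Y_0/B$ of general type; but nothing in your argument shows that $\dim B=\dim Y_0-k$, nor that $\omega_i|_{B}=0$ for all $i$, which is what is needed for the forms to descend to $A/B$. The assertion that ``the stabilizer subtorus of $Y_0$ is precisely the degeneracy locus of the $\omega_i$'' is the statement to be proved, not a consequence of the hypotheses: the stabilizer is defined by translation-invariance of $Y_0$, whereas the $\omega_i$ cut out a foliation whose leaves are a priori neither compact nor translates of a subtorus. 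The missing ingredient in the classical argument is the construction of meromorphic first integrals: taking $w_1,\dots,w_k$ pointwise independent at a general point and writing $w_{k+1}=\sum_{i=1}^{k}g_i w_i$ with $g_i$ meromorphic, closedness gives $0=dw_{k+1}=\sum_i dg_i\wedge w_i$, and wedging with $w_1\wedge\dots\wedge\widehat{w_j}\wedge\dots\wedge w_k$ yields $dg_j\wedge w_1\wedge\dots\wedge w_k=0$; hence the $g_j$ are constant along the leaves, the leaves are contained in the fibers of the meromorphic map $(g_1,\dots,g_k)$ and are therefore algebraic, and the strictness hypothesis is what forces this map to have $k$-dimensional image. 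Only after this does one have an actual fibration to which Ueno/Ran can be applied. A secondary flaw: your final justification of general type (``$k$ independent forms with nonzero wedge, so the map to the abelian quotient is generically finite and $K_Z$ is big'') is insufficient --- an abelian variety has generically finite Albanese map and trivial canonical bundle; here one genuinely needs $l\geq k+1$ together with the Ueno/Ran statement that a subvariety of an abelian variety with finite stabilizer is of general type.
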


Now the point is that Massey triviality gives a relative version of this theorem, that is a Castelnuovo-de Franchis theorem for our fibration $X\to Y$.  

Let $W<\Gamma(A, \mD)$ be a Massey trivial subspace. Let $H$ be the kernel of the monodromy representation of $\mD$, which is a normal subgroup of $\pi_1(Y, y)$, and call $H_W$ the subgroup acting trivially on $W$. For every subgroup $K<H_W$, we denote by $Y_K\to Y$ the \'{e}tale base change of group $K$ and by $X_K\to Y_K$ the associated fibration. We have the following:
\begin{thm}
\label{castmassey}
Let $X\to Y$ be a semistable fibration. Let $A\subset Y$ be an open subset and $W\subset \Gamma(A,\mD)$ a Massey trivial strict subspace.
Then $X_K$ has a morphism $h_K\colon X_K\to Z$ over a normal $(n-1)$-dimensional variety of general type $Z$ such that $W\subset h_K^*(H^0(Z,\Omega^1_Z))$. Furthermore if $W$ is a maximal Massey trivial strict subspace we have the equality $W= h_K^*(H^0(Z,\Omega^1_Z))$.
\end{thm}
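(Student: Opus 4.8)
The plan is to deduce this relative statement from the absolute generalized Castelnuovo--de Franchis Theorem \ref{cas2}, the $1$-forms produced by the Massey machinery of Section \ref{sezioneaggiunta} supplying exactly its hypotheses; the role of the \'etale cover $X_K\to Y_K$ is to turn the monodromy-twisted local data $W\subset\Gamma(A,\mD)$ into genuine global holomorphic forms. First I would pass to the cover: fixing a basis $\eta_1,\dots,\eta_l$ of $W$ and using that $\mD$ is a local system (Lemma \ref{loc}) with $A$ contractible, I regard $W$ as a subspace of the stalk $\mD_y$ on which, by definition, $K<H_W$ acts trivially. Hence on $Y_K$ the flat sections $\eta_i$ extend to single-valued global sections of the pulled-back local system, and through $\mD\hookrightarrow f_*\Omega^1_{X/Y,d_{X/Y}}$ they become global closed relative $1$-forms for $X_K\to Y_K$; Massey triviality and strictness, being conditions on small opens of the base, are inherited by these sections.

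Next I would globalize the liftings and apply Castelnuovo--de Franchis. By the local-to-global Proposition \ref{localglobal}, whose hypotheses are secured by Proposition \ref{lglob}, there is over $Y_K$ a unique lifting $\widetilde W=\langle\widetilde\eta_1,\dots,\widetilde\eta_l\rangle$ of $W$ inside the global \emph{closed} holomorphic $1$-forms on $X_K$ such that every $(d+1)$-fold wedge $\widetilde\eta_{i_0}\wedge\cdots\wedge\widetilde\eta_{i_d}$ vanishes; closedness is exactly the extra information carried by $W\subset\mD$. Strictness forces that no $d$ of the $\widetilde\eta_i$ wedge to zero, since their restrictions to a general fiber are the $d$-fold wedges of the $\eta_i$, which are linearly independent in $f_*\omega_{X/Y}$ by Definition \ref{strict}. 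Theorem \ref{cas2}, applied on $X_K$ with $k=d$, then yields a surjective morphism $h_K\colon X_K\to Z$ onto a normal variety of general type $Z$ of the asserted dimension, with $\widetilde\eta_i\in h_K^*H^0(Z,\Omega^1_Z)$; projecting to relative forms gives the inclusion $W\subset h_K^*H^0(Z,\Omega^1_Z)$.

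For the equality when $W$ is a maximal Massey trivial strict subspace, I would establish the reverse inclusion. The elements of $h_K^*H^0(Z,\Omega^1_Z)$ are pullbacks, hence closed, and therefore define sections of $\mD$ after restriction to the fibers; being pulled back along $h_K$ from $Z$, any $d+1$ of them wedge to zero, so the subspace is Massey trivial, and it is strict because $h_K$ restricts to a dominant, generically finite map on a general fiber, under which the top holomorphic forms of $Z$ pull back to a linearly independent system. Thus $h_K^*H^0(Z,\Omega^1_Z)$, read inside $\Gamma(A,\mD)$, is a Massey trivial strict subspace containing $W$, and maximality forces $W=h_K^*H^0(Z,\Omega^1_Z)$.

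The hard part will be the compactness demanded by Theorem \ref{cas2} in the Castelnuovo--de Franchis step: when $K$ has infinite index in $\pi_1(Y,y)$ the cover $X_K$ is non-compact. I expect to handle this by passing to a smooth compactification of $X_K$ to which the $\widetilde\eta_i$ extend holomorphically, controlling the boundary through semistability and the unipotent monodromy of Lemma \ref{loc}, or equivalently by building $Z$ over a finite cover and pulling it back, and then checking that $Z$ and $h_K$ are canonical so that the conclusion holds uniformly in $K<H_W$. Matching $h_K^*H^0(Z,\Omega^1_Z)$ exactly with $W$, rather than with a strictly larger space, in the maximal case is the other delicate point, and it is precisely there that strictness together with the uniqueness clause of Proposition \ref{wedge0} are essential.
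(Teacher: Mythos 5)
Your overall strategy coincides with the paper's: pass to the cover $Y_K\to Y$ so that the sections of $W$ become global flat sections of the pulled-back local system, apply Proposition \ref{localglobal} to obtain a unique global lifting by \emph{closed} holomorphic $1$-forms on $X_K$ whose $(d+1)$-fold wedges vanish, check via strictness that no $d$ of them wedge to zero, and invoke Theorem \ref{cas2}. Your treatment of the maximality clause (the reverse inclusion $h_K^*H^0(Z,\Omega^1_Z)\subset W$, using that pullbacks along $h_K$ are closed, wedge to zero in degree $d+1$, and restrict to a strict subspace on the general fiber) is sound and in fact more explicit than what the paper writes.

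The one place you diverge is the step you yourself single out as ``the hard part,'' and there you leave a genuine gap. Your proposed fixes --- compactifying $X_K$ and extending the $\widetilde\eta_i$ to the boundary, or ``building $Z$ over a finite cover'' --- are both problematic: when $K$ has infinite index in $\pi_1(Y,y)$ the cover $X_K$ need not admit a compactification to which closed holomorphic forms extend, and passing to a finite cover is circular, since finiteness of the relevant monodromy is exactly what Theorem \ref{monfin} is later trying to establish from this result. The paper's resolution is much simpler and you already hold the key fact without using it: compactness enters the proof of the generalized Castelnuovo--de Franchis theorem only to guarantee that the holomorphic $1$-forms involved are closed, and here closedness is automatic because the liftings produced by Proposition \ref{localglobal} from $W\subset\Gamma(A,\mD)$ lie in $\Gamma(Y_K,f_*\Omega^1_{X,d})$ by the very definition of $\mD$. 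So Theorem \ref{cas2} applies verbatim to the non-compact $X_K$ once the forms are known to be closed, and no compactification is needed. (A minor point: with $k=d$ in Theorem \ref{cas2} the variety $Z$ has dimension $d=n-m$; the ``$(n-1)$'' in the statement matches this only when $Y$ is a curve, so your ``of the asserted dimension'' should be read as $\dim Z=d$.)
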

\begin{proof}
	See \cite[Theorem 5.8]{RZ4} for the case where $Y$ is a curve. Here the proof is similar and we give an idea for $K=H_W$.
 
The key point is that since $H_W$ acts trivially on $W$, with the base change $Y_{H_W}\to Y$ the sections of $W$ are global sections on $Y_{H_W}$. More precisely call $\mD_W$ the local system on $Y_{H_W}$ obtained by inverse image of $\mD$, then the elements of $W$ extend to global sections in $\Gamma(Y_{H_W},\mD_W)$.
We can then apply Proposition \ref{localglobal} and find global 1-forms of $X_W$ which satisfy the hypotheses of the Castelnuovo-de Franchis Theorem \ref{cas2}. In particular note that the strictness hypothesis ensures that no collection of $k$ linearly independent forms in the span of these sections wedges to zero.
Note that even in the case $X_{H_W}$ not compact, the proof works because by Proposition \ref{localglobal} the liftings are closed 1-forms and the proof of the Castelnuovo-de Franchis also works if $X$ is not compact if we further assume that the 1-forms are closed.
\end{proof}

\section{The monodromy of $\mD$ and $\mD^d$}
\label{sez5}
In this section we study the monodromy of $\mD$, similarly to \cite{PT}, \cite{RZ4}.
We start by recalling that a subspace $W\subset \Gamma(A,\mD)$ naturally generates a local system in $\mD$ by taking the closure of $W$ under the monodromy action.
More precisely, denote by $\rho$ the monodromy map associated with $\mD$ and by $G=\pi_1(Y, y)/\ker \rho$ the monodromy group acting non-trivially on $\mD$. 
The local system generated by $W$ is by definition the local system with stalk $\widehat{W}=\sum_{g\in G}g\cdot W$. We will denote it by $\mW$.
\begin{defn}
\label{mastrivgen}
If $W$ is Massey trivial, we will say that $\mW$ is Massey trivial generated.
\end{defn} See \cite[Definition 5.5]{PT}, \cite[Definition 5.9]{RZ4}.

To the local system $\mW$ we associate its monodromy group as follows. The action of the fundamental group $\pi_1(Y, y)$ on the stalk of $\mD$ restricts to an action $\rho_\mW$ on the stalk of $\mW$, that is 
$$
\rho_\mW\colon \pi_1(Y, y)\to \text{Aut}(\widehat{W}).
$$

We now introduce a suitable set $\sK$ together with an action of the monodromy group $G_\mW=\pi_1(Y, y)/\ker \rho_\mW$ on $\sK$. Sometimes we will denote $\ker \rho_\mW$ by $H_\mW$, so that $G_\mW=\pi_1(Y,y)/H_\mW$.


Let $u_\mW\colon Y_\mW\to Y$ be the covering classified by the subgroup $H_\mW$ and $f_\mW\colon X_\mW\to Y_\mW$ the associated fibration. As before, the inverse image of the local system $\mW$ on $Y_\mW$ is trivial and we will often identify the sections of $W$ and their unique liftings provided by Proposition \ref{localglobal}, which are global closed 1-forms on $X_\mW$.

By Theorem \ref{castmassey} applied to the subgroup $H_\mW$ we get a map $h\colon X_{\mW}\to Z$ which can be composed with the action of $G_\mW$ on $X_\mW$ obtained from the standard action of $G_\mW$ on $Y_\mW$. We call $h_g$ the composition and we restrict it to $F_0$, the fiber of $f_\mW$ over a regular point $y_0$.
%

\begin{equation}
\xymatrix{
F_0\ar@{^{(}->}[r]\ar_{k_g}[drrr]&X_\mW\ar^{g}[r]\ar^{h_g}[rrd]&X_\mW\ar^{h}[rd]&\\
&&&Z
}
\end{equation}

\begin{rmk}
The map $F_0\to Z$ is surjective since the pullbacks of the 1-forms of $Z$ give on $F_0$ the subspace $W$ which is strict by hypothesis.
\end{rmk}

We can define two sets of functions 
$$
\mathcal{H}=\{h_g\colon X_\mW\to Z\mid g\in G_\mW\}
$$ and
$$
\mathcal{K}=\{k_g\colon F_0\to Z\mid g\in G_\mW\}
$$ and the natural action of $G_\mW$ by $g_1\cdot h_{g_2}=h_{g_2g_1}$ and $g_1\cdot k_{g_2}=k_{g_2g_1}$. From now on we will assume that $W$ is Massey trivial and we will prove that the action $G_\mW\times \sK\to \sK$ is faithful.
We recall without proof the following lemma see \cite[Lemma 6.1]{PT}, \cite[Lemma 5.11]{RZ4}.
\begin{lem}
\label{formula1}
Let $e$ be the neutral element of $G_\mW$ and $\alpha\in H^0(Z,\Omega^1_Z)$. Then for each $g\in G_\mW$,
\begin{equation}
k_g^*(\alpha)=g^{-1}k_e^*(\alpha)
\label{formula}
\end{equation}
where $g^{-1}$ acts on $k_e^*(\alpha)\in W$ via the monodromy action defining $\mW$.
\end{lem}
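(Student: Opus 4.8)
The plan is to reduce the identity (\ref{formula}) to a statement about a single global closed $1$-form on $X_\mW$, namely $h^*\alpha$, and to track how it transforms under the deck transformation $g$. Both sides of (\ref{formula}) live in the stalk of $\mW$ at $y_0$, i.e.\ among $1$-forms on $F_0$: the right-hand side is the monodromy image $\rho_\mW(g^{-1})$ of $k_e^*(\alpha)$, while the left-hand side is a pullback of $\alpha$ along $k_g=h_g|_{F_0}$. The bridge between them is the observation, coming from Theorem \ref{castmassey} together with the uniqueness of liftings in Proposition \ref{localglobal}, that $h^*\alpha$ is exactly the distinguished global closed $1$-form on $X_\mW$ lifting the section of $\mW$ whose value over $y_0$ is $k_e^*(\alpha)$.

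First I would rewrite the left-hand side globally. Since $k_g=h_g|_{F_0}$ and $h_g=h\circ g$, pulling back and then restricting to $F_0$ gives
\begin{equation*}
k_g^*(\alpha)=\bigl(h_g^*\alpha\bigr)|_{F_0}=\bigl(g^*h^*\alpha\bigr)|_{F_0}.
\end{equation*}
Writing $\omega_\alpha:=h^*\alpha\in\Gamma(X_\mW,\Omega^1_{X_\mW,d})$ and using that the $G_\mW$-action commutes with $f_\mW$, the deck transformation restricts to $g|_{F_0}\colon F_0\to F_{g\cdot y_0}$, so that $k_g^*(\alpha)=(g|_{F_0})^*\bigl(\omega_\alpha|_{F_{g\cdot y_0}}\bigr)$. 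The next step is to identify $\omega_\alpha$ as a lifting in the sense of Proposition \ref{localglobal}: by the interpretation of $\mW$ as the sheaf of forms liftable to closed forms (Lemma \ref{loc} and the surrounding discussion), the restriction of $\omega_\alpha$ to each fiber $F_{\tilde y}$ is the value at $\tilde y$ of a flat global section $s_\alpha$ of the trivial local system $u_\mW^*\mW$ on $Y_\mW$, with $s_\alpha(y_0)=k_e^*(\alpha)$. Because the lifting of Proposition \ref{localglobal} is \emph{unique}, it is automatically $G_\mW$-equivariant, so $g^*\omega_\alpha$ is again a global closed $1$-form, lifting the $g$-translate of $s_\alpha$.

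It then remains to read off the fiber restriction in terms of monodromy. Since $u_\mW^*\mW$ is trivial on $Y_\mW$, the section $s_\alpha$ is genuinely flat, so its values $s_\alpha(y_0)$ and $s_\alpha(g\cdot y_0)$ differ by parallel transport along any path in $Y_\mW$ joining $y_0$ to $g\cdot y_0$; downstairs this path is a loop at $y$ representing the class of $g$, so the transport is the corresponding monodromy operator. Combining this flat identification with the diffeomorphism $g|_{F_0}$ converts $(g|_{F_0})^*\bigl(\omega_\alpha|_{F_{g\cdot y_0}}\bigr)$ into a single operator applied to $k_e^*(\alpha)=s_\alpha(y_0)$, and that operator is $\rho_\mW(g^{-1})$. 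I expect the only delicate point to be precisely this last bookkeeping: one must match the deck-transformation action on $X_\mW$, which acts on forms contravariantly via $g^*$, with the monodromy action on the stalk of $\mW$ at $y_0$, and it is this contravariance, together with the standard identification of the deck group with $G_\mW=\pi_1(Y,y)/H_\mW$, that produces the inverse $g^{-1}$ rather than $g$ in (\ref{formula}). Everything else is formal once the equivariance of the lifting and the flatness of $\mW$ on $Y_\mW$ are in place.
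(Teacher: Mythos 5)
Your argument is correct and is essentially the proof the paper relies on: the paper states Lemma \ref{formula1} without proof, citing \cite[Lemma 6.1]{PT} and \cite[Lemma 5.11]{RZ4}, and the argument there is exactly your reduction $k_g^*(\alpha)=(g|_{F_0})^*\bigl((h^*\alpha)|_{F_{g\cdot y_0}}\bigr)$ followed by reading off the deck-transformation pullback as parallel transport of the flat section $s_\alpha$ of the trivialized local system $u_\mW^*\mW$, which produces the monodromy operator $\rho_\mW(g^{-1})$. The only point needing care is the sign/inverse bookkeeping between the contravariant deck action and the monodromy identification, which you correctly flag and resolve.
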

Thanks to this Lemma we can prove 
\begin{lem}
\label{faithful}
The action of $G_\mW$ on $\sK$ is faithful.
\end{lem}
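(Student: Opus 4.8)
The plan is to show faithfulness by contrapositive: if $g\in G_\mW$ acts trivially on $\sK$ then $g$ is the neutral element, and the mechanism is to convert the geometric equalities $k_{g'g}=k_{g'}$ into the statement that $g^{-1}$ fixes a spanning set of the stalk $\widehat{W}$, via the pullback formula of Lemma \ref{formula1}.

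First I would suppose that $g\in G_\mW$ fixes every element of $\sK$, so that $g\cdot k_{g'}=k_{g'}$ for all $g'\in G_\mW$. Unwinding the definition of the action, $g\cdot k_{g'}=k_{g'g}$, this reads $k_{g'g}=k_{g'}$ as morphisms $F_0\to Z$. Passing to pullbacks of global holomorphic $1$-forms, for every $\alpha\in H^0(Z,\Omega^1_Z)$ we obtain $k_{g'g}^*(\alpha)=k_{g'}^*(\alpha)$. I would then apply the formula (\ref{formula}) of Lemma \ref{formula1} to both indices $g'g$ and $g'$, which yields
$$
g^{-1}(g')^{-1}k_e^*(\alpha)=(g')^{-1}k_e^*(\alpha),
$$
where all actions are the monodromy action defining $\mW$. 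Hence $g^{-1}$ fixes every element of the form $(g')^{-1}k_e^*(\alpha)$.

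The next step is to observe that these elements span the whole stalk. By construction of $k_e$ (the surjectivity remark shows that the restriction realizes $k_e^*(H^0(Z,\Omega^1_Z))=W$, using maximality via Theorem \ref{castmassey}), as $\alpha$ ranges over $H^0(Z,\Omega^1_Z)$ the vectors $k_e^*(\alpha)$ fill out $W$; letting $g'$ range over $G_\mW$ as well, the collection $(g')^{-1}k_e^*(\alpha)$ runs through $\sum_{g'\in G}(g')^{-1}\cdot W=\widehat{W}$. Since $g^{-1}$ fixes a spanning set of $\widehat{W}$, it acts as the identity on the whole stalk, i.e. $g^{-1}\in\ker\rho_\mW=H_\mW$, and therefore $g=e$ in $G_\mW=\pi_1(Y,y)/H_\mW$, which is the desired faithfulness.

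The step I expect to be the crux is the spanning argument in the last paragraph. Using only the single relation with $g'=e$ would show merely that $g^{-1}$ fixes $W$ pointwise, which is strictly weaker than fixing all of $\widehat{W}$; the whole force of the hypothesis \emph{``$g$ acts trivially on $\sK$''} is that $g$ fixes every $k_{g'}$ simultaneously, and it is precisely the family of monodromy translates $(g')^{-1}W$ that assembles into $\widehat{W}$. Thus the full action hypothesis is exactly what is needed to conclude that $g^{-1}$ lands inside $\ker\rho_\mW$, and it is here that the definition of $G_\mW$ as the monodromy group acting on $\widehat{W}$ (rather than merely on $W$) is used in an essential way.
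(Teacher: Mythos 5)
Your proof is correct and is essentially the paper's argument written in contrapositive form: both rest on Lemma \ref{formula1} to identify $k_{g'g}^*(\alpha)$ with $g^{-1}(g')^{-1}k_e^*(\alpha)$, and on the fact that the monodromy translates $(g')^{-1}W$ span the stalk $\widehat{W}$ (the paper phrases this as "we can assume the non-fixed element has the form $\hat g w$" and then takes $g'=\hat g^{-1}$). The only cosmetic difference is that you invoke maximality to get $k_e^*(H^0(Z,\Omega^1_Z))=W$, whereas only the inclusion $W\subseteq k_e^*(H^0(Z,\Omega^1_Z))$ from Theorem \ref{castmassey} is actually needed.
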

\begin{proof}
Take $g\in G_\mW$, $g\neq e$ and we prove that there exist an element $k_{g'}$ of $\sK$ such that $g\cdot k_{g'}\neq k_{g'}$. Since by definition of the action we have $g\cdot k_{g'}=k_{g'g}$ we have to prove that 
$$
k_{g'}\colon F_0\to Z \quad\text{and}\quad k_{g'g}\colon F_0\to Z
$$ are different morphisms. We will prove this statement at the level of 1-forms, more precisely we prove that 
$$
k_{g'}^*\colon H^0(Z,\Omega^1_Z)\to H^0(F_0,\Omega^1_{F_0}) \quad\text{and}\quad k_{g'g}^*\colon H^0(Z,\Omega^1_Z)\to H^0(F_0,\Omega^1_{F_0})
$$ are different.

Now since $g\neq e$, there exist an element of $\mW$ which is not fixed by $g$, and, since $\mW$ is the local system generated by $W$, we can assume that this element is of the form $\hat{g}w$ with $w\in W$ and $\hat{g}\in G$, that is $g\hat{g}w\neq \hat{g}w$.

 By Theorem \ref{castmassey}, $w=k_e^*(\alpha)$ for some $\alpha \in H^0(Z,\Omega^1_Z)$, and by the previous Lemma we obtain 
$$
k_{g'}^*(\alpha)={g'}^{-1}k_e^*(\alpha)={g'}^{-1}w
$$ and
$$
k_{g'g}^*(\alpha)=({g'g})^{-1}k_e^*(\alpha)=({g'g})^{-1}w=g^{-1}g'^{-1}w
$$
Hence the thesis follows by taking $g'=\hat{g}^{-1}$.
\end{proof}
\subsection{Massey trivial generation and finiteness of monodromy groups}
We can now state the main theorem of this section:

\begin{thm}
\label{monfin}
Let $f \colon X \to Y$ be a semistable fibration and let $\mW\leq\mD$ be a local system generated by a maximal strict Massey trivial subspace.
Then the associated monodromy group $G_\mW$ is finite and the fiber of $\mW$ is isomorphic to 
$$
\sum_{g\in G_\mW} k_{g}^*H^0(Z,\Omega^1_Z).
$$
\end{thm}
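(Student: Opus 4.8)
The plan is to derive both assertions from the two facts already established for the set $\sK$, namely the faithfulness of the $G_\mW$-action (Lemma~\ref{faithful}) and the transcription formula $k_g^*(\alpha)=g^{-1}k_e^*(\alpha)$ (Lemma~\ref{formula1}), together with one classical finiteness input. First I would fix the geometric data. Applying Theorem~\ref{castmassey} to the subgroup $K=H_\mW$ produces a morphism $h\colon X_\mW\to Z$ onto a normal variety of general type with $W\subseteq h^*H^0(Z,\Omega^1_Z)$; since $W$ is \emph{maximal} among strict Massey trivial subspaces, the inclusion is an equality, so that $W=k_e^*H^0(Z,\Omega^1_Z)$. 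Each $k_g=(h\circ g)|_{F_0}\colon F_0\to Z$ is then surjective: the forms $k_g^*H^0(Z,\Omega^1_Z)$ are a monodromy translate of the strict space $W$, hence themselves strict, and this forces the image to be all of $Z$ (this is the content of the Remark after the defining diagram, applied to the translate $g^{-1}W$). Thus $\sK$ is a set of surjective morphisms from the \emph{fixed} source $F_0$ to the \emph{fixed} target $Z$ of general type.

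For the finiteness I would argue in two steps. The external ingredient is the de Franchis--Severi type finiteness quoted in the introduction: for a fixed variety $F_0$ and a fixed variety $Z$ of general type there are only finitely many surjective (equivalently dominant) morphisms $F_0\to Z$. This gives $|\sK|<\infty$. By Lemma~\ref{faithful} the action $g_1\cdot k_{g_2}=k_{g_2g_1}$ realizes $G_\mW$ as a subgroup of the symmetric group $\mathrm{Sym}(\sK)$, so $G_\mW$ embeds into a finite group and is therefore finite.

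For the description of the fiber, recall that by definition the stalk of $\mW$ is $\widehat W=\sum_{g\in G}g\cdot W$, and since the $G$-action on $\widehat W\subseteq\mD_y$ depends only on the image in $G_\mW$, this orbit sum equals $\sum_{g\in G_\mW}g\cdot W$. Using Lemma~\ref{formula1} and $W=k_e^*H^0(Z,\Omega^1_Z)$ I obtain, for every $g\in G_\mW$,
$$
k_g^*H^0(Z,\Omega^1_Z)=g^{-1}k_e^*H^0(Z,\Omega^1_Z)=g^{-1}\cdot W .
$$
Summing over the now finite group $G_\mW$ and replacing $g$ by $g^{-1}$, which is a bijection of $G_\mW$, yields
$$
\sum_{g\in G_\mW}k_g^*H^0(Z,\Omega^1_Z)=\sum_{g\in G_\mW}g\cdot W=\widehat W ,
$$
which is exactly the asserted isomorphism for the fiber of $\mW$ (note that repetitions among the $k_g$, if any, do not affect the subspace sum, so this also matches the $\sum_{k\in\sK}k^*H^0(Z,\Omega^1_Z)$ of the introduction).

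The main obstacle is really the finiteness of $\sK$; everything else is a formal consequence of Lemmas~\ref{faithful} and~\ref{formula1}. I would therefore take care to verify precisely the two hypotheses that make the de Franchis--Severi finiteness applicable: that each $k_g$ is genuinely surjective, which I extract from strictness via the Remark as above, and that the common target $Z$ is of general type, which is part of the conclusion of Theorem~\ref{castmassey}. Once these are in place, the embedding $G_\mW\hookrightarrow\mathrm{Sym}(\sK)$ closes the finiteness statement and the computation above closes the description of the fiber.
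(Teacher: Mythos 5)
Your proposal is correct and follows essentially the same route as the paper: finiteness of $\sK$ via the Kobayashi--Ochiai finiteness of surjective morphisms onto the general type variety $Z$, the embedding $G_\mW\hookrightarrow\Aut(\sK)$ from Lemma~\ref{faithful}, and the stalk computation via Lemma~\ref{formula1}. Your added care in checking surjectivity of each $k_g$ from strictness and the equality $W=k_e^*H^0(Z,\Omega^1_Z)$ from maximality only makes explicit what the paper leaves implicit.
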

\begin{proof}
By Lemma \ref{faithful}, we have an inclusion 
$$
G_\mW\hookrightarrow \text{Aut}(\sK)
$$ hence it is enough to show that $\sK$ is a finite set. $\sK$ is contained in $\text{Mor}(F_0,Z)$ the set of all surjective morphisms from $F_0$ to $Z$ and this is finite being $Z$ of general type, see for example \cite[Theorem 1]{kob}

For the result on the stalk of $\mW$, recall that this stalk is $\widehat{W}=\sum_{g\in G}g\cdot W$. Hence we have
$$
\widehat{W}=\sum_{g\in G}g\cdot W=\sum_{g\in G_\mW}g\cdot W=\sum_{g\in G_\mW} k_{g}^*H^0(Z,\Omega^1_Z)
$$where the second equality comes from the fact that $H_\mW$ fixes $W$ and the last from Lemma \ref{formula1}.
\end{proof}

Now if $\mD$ itself is Massey trivial generated we have the immediate corollary:
\begin{cor}
\label{fin2}
If $\mD$ is Massey trivial generated by a strict subspace, then the monodromy group $G$ is finite.
If furthermore the map $\bigwedge^{d}\mD\to \mD^{d}$ is surjective, the local system $\mD^{d}$ also has finite monodromy. 
\end{cor}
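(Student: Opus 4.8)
The plan is to derive both statements from Theorem \ref{monfin} together with the functoriality of monodromy under exterior powers and surjections of local systems.

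For the first statement, the hypothesis is that $\mD = \mW$ for some strict Massey trivial subspace $W \subset \Gamma(A,\mD)$. Since Theorem \ref{monfin} is phrased for a \emph{maximal} such subspace, I would first enlarge $W$ to a maximal strict Massey trivial subspace $W'$; this is possible because, on a contractible $A$, the space $\Gamma(A,\mD)$ agrees with the finite-dimensional stalk of the finite-rank local system $\mD$. From $W \subseteq W'$ one gets $\mD = \mW \subseteq \mW' \subseteq \mD$, hence $\mW' = \mD$, so $W'$ still generates $\mD$ and the restricted representation $\rho_{\mW'}$ coincides with $\rho$. Applying Theorem \ref{monfin} to $W'$ then gives that $G_{\mW'} = G$ is finite.

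For the second statement, the idea is that the hypothesized surjection $\bigwedge^d\mD \to \mD^d$ is a morphism of \emph{local systems}: as observed in Section \ref{sezioneaggiunta}, a wedge of closed relative forms is again closed, so the wedge map is defined on $\mD$ (not merely on the ambient sheaves) and is equivariant for the monodromy. Writing $\rho$, $\Lambda^d\rho$ and $\rho^d$ for the monodromy representations of $\mD$, $\bigwedge^d\mD$ and $\mD^d$, the relation $\Lambda^d\rho(g) = \bigwedge^d\rho(g)$ gives $\ker\rho \subseteq \ker(\Lambda^d\rho)$, while equivariance together with surjectivity of the map yields $\ker(\Lambda^d\rho) \subseteq \ker\rho^d$. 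Hence $\ker\rho \subseteq \ker\rho^d$, so $G_{\mD^d} = \pi_1(Y,y)/\ker\rho^d$ is a quotient of the finite group $G$ and is therefore finite.

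The only genuinely delicate point, and the one I would write out in full, is the equivariance-and-surjectivity step of the second part: one has to verify that $\bigwedge^d\mD \to \mD^d$ is a morphism of local systems, so that the monodromy of $\mD^d$ factors through that of $\bigwedge^d\mD$. This is exactly where surjectivity is indispensable, since without it one would control the monodromy only on the image subsystem, which could be a proper subsystem of $\mD^d$. The remaining manipulations are formal bookkeeping about how monodromy behaves under exterior powers and quotients.
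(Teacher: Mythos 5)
Your proof is correct and takes essentially the same approach as the paper: the first claim is read off from Theorem \ref{monfin}, and the second from the fact that a surjection of local systems $\bigwedge^{d}\mD\to \mD^{d}$ forces the monodromy of $\mD^{d}$ to factor through that of $\bigwedge^{d}\mD$, whose group is finite by the first claim. The paper compresses this into two lines, silently passing over the maximality point you handle by enlarging $W$ inside the finite-dimensional stalk, and it says ``subgroup'' where your ``quotient of $G$'' is the more precise formulation; these are refinements of the same argument, not a different one.
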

\begin{proof}
The first statement is immediate by Theorem \ref{monfin}. 

For the second it is enough to note that if $\bigwedge^{d}\mD\to \mD^{d}$ is a surjective map of local systems, then the monodromy group of $\mD^{d}$ is a subgroup of the monodromy group of $\bigwedge^{d}\mD$ and latter is finite by the first statement.
\end{proof}

We recall that $\mD^{d}$ is the local system associated to the second Fujita decomposition by Theorem \ref{fujitaii}. Furthermore the finiteness of the monodromy group of a local system is equivalent to the semi-ampleness of the unitary flat vector bundle, see for example \cite[Theorem 2.5]{CD1}. Hence we have
\begin{cor}
\label{semi}
Under the hypotheses of Corollary \ref{fin2}, the unitary flat bundle of the second Fujita decomposition is semi-ample.
\end{cor}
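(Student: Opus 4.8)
The plan is to derive Corollary \ref{semi} directly from Corollary \ref{fin2} by combining two external facts that have already been cited in the excerpt. First, Theorem \ref{fujitaii} identifies $\mD^{d}$ as precisely the local system attached to the unitary flat summand $\sU$ of the second Fujita decomposition $f_*\omega_{X/Y}=\sU\oplus\sW$, via the one-to-one correspondence between local systems of $\mC$-vector spaces, flat vector bundles, and representations of $\pi_1(Y,y)$ recalled in the introduction. Second, \cite[Theorem 2.5]{CD1} states the equivalence between finiteness of the monodromy group of a local system and semi-ampleness of the associated unitary flat vector bundle. So the entire content of the corollary is a translation of the conclusion of Corollary \ref{fin2}, namely finiteness of the monodromy of $\mD^{d}$, into the language of semi-ampleness of $\sU$.

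Concretely, I would argue as follows. Under the hypotheses of Corollary \ref{fin2}---that $\mD$ is Massey trivial generated by a strict subspace and that the wedge map $\bigwedge^{d}\mD\to\mD^{d}$ is surjective---Corollary \ref{fin2} gives that the monodromy group of $\mD^{d}$ is finite. By Theorem \ref{fujitaii}, $\sU=\mD^{d}\otimes\sO_Y$ is exactly the unitary flat bundle corresponding to the local system $\mD^{d}$. Applying the cited equivalence \cite[Theorem 2.5]{CD1} to this local system, the finiteness of its monodromy group is equivalent to the semi-ampleness of $\sU$, which is the desired conclusion.

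Since all the substantive work has been carried out upstream---the finiteness statement in Corollary \ref{fin2}, the identification of $\sU$ with $\mD^{d}$ in Theorem \ref{secfujita}, and the monodromy-versus-semi-ampleness dictionary from \cite{CD1}---there is essentially no obstacle remaining: the proof is a short invocation of these three results. The only point warranting a sentence of care is to confirm that the unitary flat bundle appearing in \cite[Theorem 2.5]{CD1} matches the $\sU$ of the Fujita decomposition under the standard flat-bundle/local-system correspondence, which is immediate from Theorem \ref{fujitaii}. I would therefore keep the proof to two or three lines, simply chaining these citations together.
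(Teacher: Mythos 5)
Your proof is correct and follows exactly the paper's own route: the paper likewise derives Corollary \ref{semi} by chaining Corollary \ref{fin2} (finiteness of the monodromy of $\mD^{d}$), Theorem \ref{secfujita} (the identification $\sU=\mD^{d}\otimes\sO_Y$), and the equivalence between finite monodromy and semi-ampleness from \cite[Theorem 2.5]{CD1}. Nothing further is needed.
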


An example where this happens is the case where the fibers are hyperelliptic curves. First note that in this case $\mD=\mD^d$ and furthermore the strictness condition is trivially satisfied by Remark \ref{remstrict}. Hence we have

\begin{prop}
\label{hyper}
Let $X\to Y$ be a semistable fibration such that the general fiber $F$ is an hyperelliptic curve of genus $>2$.
Then $\mD$ is Massey trivial generated and  the unitary flat bundle $\sU$ of the Fujita decomposition is semiample. 
\end{prop}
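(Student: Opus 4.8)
The plan is to reduce the statement to the finiteness results already proved and then to establish the single missing input, namely that $\mD$ is Massey trivial generated. Since the general fiber is a curve we have $d=n-m=1$, so that $\mD=\mD^{d}$, the strictness condition is automatic by Remark \ref{remstrict}, and the wedge map $\bigwedge^{d}\mD\to\mD^{d}$ is the identity. Consequently, once we know that $\mD$ is Massey trivial generated by a (necessarily strict) subspace, Corollary \ref{fin2} yields finiteness of the monodromy group of $\mD=\mD^{d}$ and Corollary \ref{semi} gives the semi-ampleness of $\sU$. To prove Massey trivial generation it is enough, by Definitions \ref{mastriv} and \ref{mastrivgen}, to exhibit over a small contractible $A\subset Y^{0}$ a Massey trivial subspace generating $\mD$ under monodromy; I will in fact show the strongest possible statement, that the \emph{whole} stalk of $\mD$ is Massey trivial, i.e. that every pair $\eta_1,\eta_2\in\Gamma(A,\mD)$ (recall $d+1=2$) has vanishing Massey product.

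The mechanism is the relative hyperelliptic involution. Over the locus $Y^{0}$ of smooth fibers every fiber is hyperelliptic (hyperellipticity is a closed condition in moduli and holds for the general, hence for every, smooth fiber), and the unique $g^{1}_{2}$ glues to a relative involution $\iota\colon X^{0}\to X^{0}$ over $Y^{0}$, possibly after shrinking $Y^{0}$. Being a morphism over $Y$, $\iota$ fixes $f^{*}\Omega^{1}_{Y}$ and acts on the quotient $\Omega^{1}_{X/Y}$, restricting on each fiber to $\iota_{F}^{*}=-1$ on $H^{0}(\Omega^{1}_{F})$ and on $H^{0}(\omega_{F})$, since the holomorphic differentials of a hyperelliptic curve are anti-invariant. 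In particular $f_*\omega_{X/Y}$ is, fiberwise, entirely contained in the $(-1)$-eigenspace of $\iota^{*}$, and the same holds for the sections $\eta_i\in\mD\subset f_*\Omega^1_{X/Y}$.

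With this in hand the computation is immediate. Lift $\eta_1,\eta_2$ by Lemma \ref{split} to $s_i\in\Gamma(A,f_*\Omega^1_X)$; writing $s_i=s_i^{+}+s_i^{-}$ for the $\iota^{*}$-eigendecomposition and using that the $\iota$-equivariant projection $f_*\Omega^1_X\to K_\partial$ of Sequence \ref{seqK} sends the invariant part $s_i^{+}$ to the invariant part of $\eta_i$, which is $0$, we get $s_i^{+}\in\Omega^1_Y$, the kernel in Sequence \ref{seqK}; replacing $s_i$ by $s_i^{-}$ we may thus assume $\iota^{*}s_i=-s_i$. By the local criterion of Proposition \ref{masslocale}, Massey triviality of $\eta_1,\eta_2$ amounts to $s_1\wedge s_2\wedge\widehat{dy_k}|_{\frac{\partial}{\partial y}}\in\langle\eta_1,\eta_2\rangle$ for every $k$. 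But $s_1\wedge s_2\wedge\widehat{dy_k}\in\Gamma(A,f_*\omega_X)$ is $\iota$-invariant, since the two factors $s_i$ contribute $(-1)^{2}=+1$ and the base forms $\widehat{dy_k}$ are fixed, so its contraction with the $\iota$-invariant field $\frac{\partial}{\partial y}$ is an $\iota$-invariant section of $f_*\omega_{X/Y}$. As that sheaf is anti-invariant, this section vanishes. Hence every pair in $\mD$ has zero Massey product, $\mD$ is Massey trivial generated, and the proposition follows from Corollaries \ref{fin2} and \ref{semi}.

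I expect the delicate point to be the existence and good behaviour of $\iota$: that the relative $g^{1}_{2}$ really glues to an honest involution of $X^{0}$ over $Y^{0}$ (after possibly shrinking to a dense open, which is harmless since Massey triviality is tested on the general fiber), that $\iota^{*}$ splits the coherent sheaves $f_*\Omega^1_X$ and $f_*\omega_{X/Y}$ into eigensheaves compatibly with the sequences and maps used above, and that the eigen-projection of a lift is again a lift. I would also remark that the involution argument uses only $g\geq 2$; the hypothesis $g>2$ is what makes hyperellipticity a genuine restriction (for $g=2$ every curve is hyperelliptic) and keeps us within the general-type hypotheses underlying the cited finiteness results.
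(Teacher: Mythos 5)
Your proposal is correct and follows essentially the same route as the paper: the paper also derives Massey triviality from the hyperelliptic involution $\sigma$, computing $\sigma^*m^k_\xi(\eta_1,\eta_2)=m^k_\xi(-\eta_1,-\eta_2)=m^k_\xi(\eta_1,\eta_2)$ while $\sigma^*=-1$ on $H^0(\omega_F)$ forces $\sigma^*m^k_\xi(\eta_1,\eta_2)=-m^k_\xi(\eta_1,\eta_2)$, hence the vanishing, and then invokes Corollary \ref{semi}. Your eigen-decomposition of the liftings into $\iota^*$-anti-invariant parts is just a more explicit justification of the equivariance step that the paper asserts directly.
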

\begin{proof}
Let $F$ be the general fiber and denote by $\xi\in H^1(T_F)$ the extension class attached to $F$.
Take $\eta_1,\eta_2\in \mD_y\subset H^0(F,\Omega^1_F)$ and construct their Massey product 
$s_1\wedge s_2\wedge\widehat{dy_k}|_{\frac{\partial}{\partial y}}$ which we denote by $m^k_\xi(\eta_1,\eta_2)$. This is obviously antisymmetric.

The hyperelliptic involution $\sigma$ gives a map $\sigma^*\colon H^0(\omega_F)\to H^0(\omega_F)$ which is the multiplication by $-1$. Hence we get
$$
\sigma^*m_{\xi}^k(\eta_1,\eta_2)=-m_{\xi}^k(\eta_1,\eta_2)
$$ while at the same time
$$
\sigma^*m_{\xi}^k(\eta_1,\eta_2)=m_{\xi}^k(-\eta_1,-\eta_2)
$$ This gives the desired vanishing of the Massey product hence $\mD$ is Massey trivial generated.
We can then apply Corollary \ref{semi} and conclude that the unitary flat bundle $\sU$ is semiample. 
\end{proof}

Note that this result can be generalized to every fibration with odd dimensional fibers such that the general fiber $F$ has an involution $\sigma$ such that $F/\sigma$ has $p_g = 0$ and such that $\mD$ is generated by anti-invariant 1-forms, cf. \cite[Proposition 5.15]{RZ4}.

\subsection{The monodromy of sub local systems of $\mD^d$}
We finish this section with an alternative tool for the study of the monodromy of $\mD^d$ and of its local subsystems.
The different approach comes from a version of the Castelnuovo-de Franchis theorem for $p$-forms, with $p>1$.
Let $X$ be a smooth variety, $w_1,\dots, w_l \in H^0 (X,\Omega^p_X)$, $l\geq p+1$, be linearly independent $p$-forms such that $w_i\wedge w_j=0$ (as an element of $\bigwedge^2\Omega^p_X$ and not of $\Omega_X^{2p}$) for any choice of $i,j=1,\dots, l$. These forms generate a subsheaf of $\Omega^p_X$ generically of rank $1$. Note that the quotients $w_i/w_j$ define a non-trivial global meromorphic function on $X$ for every $i\neq j$, $i,j=1,\dots, l$. By taking the differential $d (w_i/w_j)$ we then get global meromorphic $1$-forms on $X$. We assume that there exist $p$ of these meromorphic differential forms $d (w_i/w_j)$ that do not wedge to zero; if this is the case we call  the subset $\{w_1,\dots, w_l \}\subset H^0 (X,\Omega^p_X)$ $p$-strict. In new setting, this is of course the analogue of the strictness condition of Definition \ref{strict}. The following is Theorem 7.2 in \cite{RZ5}.
\begin{thm}
	\label{pcast}
	Let $X$ be an $n$-dimensional smooth variety and let $\{w_1,\dots, w_l \}\subset H^0 (X,\Omega^p_X)$ be a $p$-strict set. Then there exists a rational map $f\colon X\dashrightarrow Z$ over a $p$-dimensional smooth variety $Z$ of general type such that the $w_i$ are pullback of some holomorphic $p$-forms $\eta_i$ on $Z$, that is $w_i=f^*\eta_i$, where $i=1,\dots , l$.
\end{thm}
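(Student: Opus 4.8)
The plan is to extract from the hypotheses a dominant rational map onto a $p$-dimensional base and to realize the $w_i$ as pullbacks of its top forms. First I would exploit the condition $w_i\wedge w_j=0$ in $\bigwedge^2\Omega^p_X$: this says precisely that at a general point of $X$ all the $w_i$ lie on a single line in the fibre of $\Omega^p_X$. Fixing one of them, say $w_1$ with $w_1\neq 0$ generically, I would write $w_i=g_i w_1$ with $g_i=w_i/w_1$ global meromorphic functions and $g_1=1$. Note that $w_i/w_j=g_i/g_j$, so the meromorphic $1$-forms appearing in the definition of $p$-strictness are exactly the $d(g_i/g_j)$; in particular each $dg_i=d(w_i/w_1)$ is among them.

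The key step uses that the $w_i$ are closed (automatic when $X$ is compact K\"ahler, and available in our application since the relevant forms come from $f_*\Omega^d_{X,d}$). From $0=dw_i=dg_i\wedge w_1+g_i\,dw_1=dg_i\wedge w_1$ I obtain $dg_i\wedge w_1=0$ for every $i$. By $p$-strictness the $\mathbb{C}(X)$-span of the $dg_i$ has dimension at least $p$, so after reindexing $dg_{i_1}\wedge\dots\wedge dg_{i_p}\neq 0$. A pointwise linear-algebra computation then shows that a $p$-form annihilated by wedging with the $p$ independent $1$-forms $dg_{i_1},\dots,dg_{i_p}$ must be proportional to their wedge, i.e. $w_1=h\,dg_{i_1}\wedge\dots\wedge dg_{i_p}$ for a meromorphic function $h$. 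Feeding this back gives $dg_i\wedge dg_{i_1}\wedge\dots\wedge dg_{i_p}=0$ for all $i$ and, from $dw_1=0$, also $dh\wedge dg_{i_1}\wedge\dots\wedge dg_{i_p}=0$. In characteristic zero these identities say that every $g_i$ and $h$ is algebraic over $K_0:=\mathbb{C}(g_{i_1},\dots,g_{i_p})$.

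I would then let $L$ be the algebraic closure of $K_0$ in $\mathbb{C}(X)$ and take $f\colon X\dashrightarrow Z$ to be the dominant rational map onto a smooth projective model $Z$ with $\mathbb{C}(Z)=L$; since $g_{i_1},\dots,g_{i_p}$ are algebraically independent, $\dim Z=p$. As $g_i,h\in L$, the form $w_i=(g_i h)\,dg_{i_1}\wedge\dots\wedge dg_{i_p}$ is the pullback $f^*\eta_i$ of the meromorphic top form $\eta_i$ on $Z$ given by the same expression in $\mathbb{C}(Z)$. Holomorphicity of $\eta_i$ follows because $f$ is dominant and $w_i=f^*\eta_i$ is holomorphic, so any pole of $\eta_i$ would pull back to a pole of $w_i$; this is the routine step of passing to a resolution of the indeterminacy of $f$. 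For general type I would observe that the $\eta_i$ are $l\geq p+1$ sections of $\omega_Z=K_Z$ whose associated map $Z\dashrightarrow\mathbb{P}^{l-1}$, namely $[\eta_1:\dots:\eta_l]=[1:g_2:\dots:g_l]$, has image of dimension equal to the transcendence degree of $\mathbb{C}(g_2,\dots,g_l)$, which is exactly $p$ by the previous step; hence this subsystem of $|K_Z|$ already defines a generically finite map, forcing $\kappa(Z)=\dim Z=p$, so $Z$ is of general type.

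The main obstacle I expect is the descent-and-holomorphicity step: making rigorous that the algebraic-dependence identities force genuine membership $g_i,h\in L$, and that the resulting $\eta_i$ are honestly holomorphic on a good model, requires a careful resolution of $f$ together with control of the forms along the exceptional and ramification loci. By contrast, the linear-algebra normalization $w_1=h\,dg_{i_1}\wedge\dots\wedge dg_{i_p}$ and the final general-type conclusion are comparatively formal once this descent is in place.
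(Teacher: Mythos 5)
There is nothing to compare against here: the paper does not prove this statement but imports it verbatim as Theorem 7.2 of \cite{RZ5}. Judged on its own, your reconstruction is the standard argument for this kind of generalized Castelnuovo--de Franchis statement and, as far as I can tell, the one used in that reference: the reduction $w_i=g_iw_1$ from $w_i\wedge w_j=0$ in $\bigwedge^2\Omega^p_X$, the identity $dg_i\wedge w_1=0$ from closedness, the pointwise linear algebra giving $w_1=h\,dg_{i_1}\wedge\dots\wedge dg_{i_p}$ (a $p$-form killed by wedging with $p$ independent $1$-forms is a multiple of their product), the Jacobian criterion placing all $g_i$ and $h$ in the algebraic closure $L$ of $\mathbb{C}(g_{i_1},\dots,g_{i_p})$ in $\mathbb{C}(X)$, and the observation that the $\eta_i$ span a subsystem of $|K_Z|$ defining a generically finite map, whence $Z$ is of general type. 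Each of these steps checks out.

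The one substantive caveat is the one you flag yourself: as literally stated for a bare ``smooth variety'', the theorem needs both the closedness of the $w_i$ (your argument uses $dw_i=0$ essentially, and this is not automatic off the compact K\"ahler world) and compactness, or at least surjectivity of the resolved map $\widetilde X\to Z$, for the holomorphicity of the $\eta_i$ and the general-type conclusion. For instance on $X=\mathbb{C}^*$ the pair $\{dz/z,\,dz\}$ of closed forms is $1$-strict in the sense of the definition, the construction produces $Z=\mathbb{P}^1$ with $\eta_1=dz/z$ not holomorphic, and $\mathbb{P}^1$ is not of general type; so closedness alone does not rescue the non-compact case, and the paper's later use of Theorem \ref{pcast} on the possibly non-compact covering $X_\mW$ deserves the same scrutiny. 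In the intended compact K\"ahler or projective setting your proof is complete, and your identification of the descent-and-holomorphicity step as the fragile point is exactly right.
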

This result allows the study of local systems $\mW\leq\mD^d$ generated under monodromy by a vector space $W<\Gamma(A,\mD^d)$ where $W=\langle\eta_1,\dots,\eta_l\rangle$ and the sections $\eta_{i}$ are $d$-strict and admit liftings $s_i\in \Gamma(A,f_*\Omega_{X,d}^d)$ with $s_i\wedge s_j=0$ for every choice of $i,j$. Note that since the dimension of the fibers is $d$, the wedges $\eta_i\wedge\eta_j$ vanish by dimensional reasons, hence it makes sense to ask that the $\eta_{i}$ are $d$-strict. 
\begin{thm}
	Let $W=\langle\eta_1,\dots,\eta_l\rangle$ and assume that the sections $\eta_{i}$ are $d$-strict and admit liftings $s_i\in \Gamma(A,f_*\Omega_{X,d}^d)$ with $s_i\wedge s_j=0$ for every choice of $i,j$. Then $W$ generates a local system $\mW$ with finite monodromy group $G_\mW$.
\end{thm}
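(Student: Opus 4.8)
The plan is to reproduce the architecture of the proof of Theorem \ref{monfin}, feeding in the $d$-form Castelnuovo--de Franchis theorem (Theorem \ref{pcast}) in place of the ordinary one and $d$-strictness in place of strictness. First I would form the local system $\mW\leq\mD^d$ generated by $W$ under the monodromy action of the monodromy group $G$ of $\mD^d$, so that its stalk is $\widehat W=\sum_{g\in G}g\cdot W$; set $H_\mW=\ker\rho_\mW$ and $G_\mW=\pi_1(Y,y)/H_\mW$. Passing to the covering $u_\mW\colon Y_\mW\to Y$ classified by $H_\mW$ and to the associated fibration $f_\mW\colon X_\mW\to Y_\mW$, the inverse image of $\mW$ is trivial, so the sections of $W$ become global and their liftings extend to global closed $d$-forms $s_i$ on $X_\mW$ still satisfying $s_i\wedge s_j=0$ in $\bigwedge^2\Omega^d_{X_\mW}$ and still forming a $d$-strict family (compare Proposition \ref{localglobal} for the $1$-form case).

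With these global closed $d$-forms I would then apply Theorem \ref{pcast} with $p=d$, obtaining a dominant rational map $h\colon X_\mW\dashrightarrow Z$ onto a $d$-dimensional smooth variety $Z$ of general type and holomorphic top-forms $\theta_i\in H^0(Z,\Omega^d_Z)$ with $s_i=h^*\theta_i$. For each $g\in G_\mW$ I set $h_g=h\circ g$, using the deck action of $G_\mW$ on $X_\mW$. To restrict to a fiber I would first choose a regular value $y_0\in Y_\mW$ whose entire $G_\mW$-orbit avoids the locus $B=\{y:F_y\subset\mathrm{Ind}(h)\}$, which is a proper closed subset of $Y_\mW$; such a $y_0$ exists because, over $\mC$, the irreducible variety $Y_\mW$ is not a countable union of proper closed subvarieties, whereas $\bigcup_{g\in G_\mW}g(B)$ is such a countable union. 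With $F_0:=f_\mW^{-1}(y_0)$ this makes every $k_g:=h_g|_{F_0}\colon F_0\dashrightarrow Z$ a well-defined rational map; since $\dim F_0=d=\dim Z$ and $d$-strictness restricted to $F_0$ forces the $d(s_i/s_j)|_{F_0}$ to span a rank-$d$ sheaf generically, each $k_g$ is dominant and generically finite. I then set $\sK=\{k_g\mid g\in G_\mW\}$ with the action $g_1\cdot k_{g_2}=k_{g_2g_1}$.

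The heart of the argument is then literally the pair of Lemmas \ref{formula1} and \ref{faithful}. The identity $k_g^*(\theta)=g^{-1}k_e^*(\theta)$ for $\theta\in H^0(Z,\Omega^d_Z)$ is proved exactly as before, since it only encodes the compatibility of $h_g=h\circ g$ with the monodromy action on $\widehat W$ and is blind to the degree of the forms. Faithfulness follows verbatim: for $g\neq e$ choose an element $\hat g\cdot w$ of $\widehat W$ not fixed by $g$, with $w=k_e^*(\theta_i)\in W$, so that the formula gives $k_{\hat g^{-1}}^*(\theta_i)=\hat g\,w\neq g\hat g\,w=k_{\hat g^{-1}g}^*(\theta_i)$ and hence $k_{\hat g^{-1}}\neq k_{\hat g^{-1}g}$. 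Thus $G_\mW\hookrightarrow\Aut(\sK)$, and it remains to bound $\sK$: it sits inside the set of dominant rational maps from the fixed variety $F_0$ onto the general-type variety $Z$ of equal dimension, which is finite by the Kobayashi--Ochiai finiteness theorem for dominant meromorphic maps onto varieties of general type \cite{kob}. Hence $G_\mW$ is finite.

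The main obstacle, and the only genuine departure from Theorem \ref{monfin}, is that Theorem \ref{pcast} produces a rational rather than a regular map. Everything must therefore be carried out in the meromorphic category: I must ensure that the restriction $k_g$ to a general fiber is a well-defined dominant rational map and that $k_g^*$ acts on $H^0(Z,\Omega^d_Z)$, which is precisely what the careful choice of $y_0$ above secures (the subtlety being that $G_\mW$ is a priori infinite, so the indeterminacy loci $\mathrm{Ind}(h_g)=g^{-1}\mathrm{Ind}(h)$ form an a priori infinite family, handled by the countable-union argument over $\mC$); and I must invoke the version of the finiteness theorem valid for dominant meromorphic maps rather than for morphisms. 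Checking that $d$-strictness restricted to the general fiber forces $k_g$ to be generically finite onto $Z$ is the concrete technical point underpinning all of this.
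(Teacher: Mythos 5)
Your proposal is correct and follows essentially the same route as the paper: pass to the covering trivializing $\mW$, apply the $d$-form Castelnuovo--de Franchis theorem (Theorem \ref{pcast}) to the global closed $d$-strict liftings, let $G_\mW$ act faithfully on the set of induced dominant meromorphic maps $k_g\colon F_0\dashrightarrow Z$ via the adapted Lemmas \ref{formula1} and \ref{faithful}, and conclude by the Kobayashi--Ochiai finiteness theorem. Your extra care in choosing $F_0$ to avoid the indeterminacy loci of all the $h_g$ (via the countable-union argument) is a welcome refinement of a point the paper passes over tacitly, but it does not change the argument.
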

\begin{proof}
	The proof is similar to Theorem \ref{monfin}. 
	
	Since by hypothesis the $\eta_i$ are $d$-strict, it is easy to see that the $s_i$ are also $d$-strict. Furthermore, similarly to Therorem \ref{castmassey}, also in the non-compact case we can apply Theorem \ref{pcast} because the forms $s_i$ are closed $d$-forms.
	Hence after taking the appropriate covering we obtain meromorphic maps that, restricted to the general fiber, define the following set
	$$
	\hat{\sK}=\{k_g\colon F_0 \dashrightarrow Z\mid g\in G_\mW\}.
	$$
	By strictness the maps $k_g$ are dominant and, as in the previous cases, the monodromy group acts faithfully on this set; Lemma \ref{formula1} and Lemma \ref{faithful}  can be easily adapted also in the case of $d$-forms. 
	
	The set $\hat{\sK}$ is finite thanks to \cite{kob} and we conclude.
\end{proof}
\section{Massey products of higher degree}
\label{masseybig}
So far, when working with Massey products, we have always considered a wedge of $d+1$ differential forms, where $d=n-m$ is the relative dimension of $f\colon X\to Y$, see Definition \ref{mtrivial} and following. The goal of this section is to analyze what happens when we consider the analogous construction with a wedge product of a number of sections greater than $d+1$. We point out that in the usual context of \cite{RZ1}, \cite{PT}, \cite{RZ4}, that is when the base $Y$ is a curve, we have that $d+1=n=\dim X$ hence this analysis is only meaningful when $Y$ is not a curve. 

Consider an integer $k$ with $d+1\leq k\leq n$.
Given $k$ linearly independent sections $\eta_1,\ldots,\eta_{k}\in \Gamma(A,K_{\partial})$, we choose appropriate liftings in $\Gamma(A,f_*\Omega_{X}^1)$ by Lemma \ref{split}. We take the wedge of these forms on $X$ via
\begin{equation}
\bigwedge^{k} f_*\Omega^1_X\to f_*\Omega^{k}_X
\end{equation}
 to obtain an element of $f_*\Omega^{k}_X$.


%
\begin{defn}
\label{kmas}
In complete analogy with Definition \ref{mtrivial}, the Massey product of $\eta_1,\ldots,\eta_{k}$, $d+1\leq k\leq n$, is the section of $f_*\Omega^{k}_X$ computed via this construction. We call it $k$-Massey product if we want to highlight the number of starting sections.
\end{defn}
On the other hand, it is not convenient to define Massey triviality following Definition \ref{mastriv}, because in this case it turns out to be quite convoluted. From Remark \ref{eccoperche} we have the following easier, but equivalent, definition
\begin{defn}
	\label{kmt}
	We say that $\eta_1,\ldots,\eta_{k}$ are $k$-Massey trivial if there exist liftings ${s}_1,\dots,{s}_{k}$ of the $\eta_i$ such that ${s}_1\wedge\dots\wedge{s}_{k}=0$. We say that a subspace $W<\Gamma(A,\mD)$ is $k$-Massey trivial if any $k$-uple of linearly independent sections in $W$ is  $k$-Massey trivial.
\end{defn}

In the following we make the usual assumption of strictness, that is no wedge of less than $k$ of these liftings is zero.



We are mostly concerned with Massey products built from sections $\eta_i\in \Gamma(A,\mD)$.
From the definition it is clear that k-Massey triviality is strictly related to the Castelnuovo-de Franchis hypothesis. As a first example, using the same notation of Theorem \ref{castmassey} for the base change associated to a subgroup $K$ of $H_W$ we have the following result.

\begin{thm}
\label{castmassey2}
Let $X\to Y$ be a semistable fibration with $\dim X=n$, $\dim Y=m$. Let $A\subset Y$ be an open subset and $W\subset \Gamma(A,\mD)$ a $k$-Massey trivial strict subspace.
Then $X_K$ has a morphism $h_K\colon X_K\to Z$ over a normal $(k-1)$-dimensional variety of general type $Z$ such that $W\subset h_K^*(H^0(Z,\Omega^1_Z))$. Furthermore if $W$ is a maximal $k$-Massey trivial subspace we have the equality $W= h_K^*(H^0(Z,\Omega^1_Z))$.
\end{thm}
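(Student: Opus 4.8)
The plan is to follow the proof of Theorem \ref{castmassey} almost verbatim, replacing everywhere the wedge of $d+1$ forms by a wedge of $k$ forms and applying the generalized Castelnuovo--de Franchis Theorem \ref{cas2} with its target dimension equal to $k-1$ rather than $n-1$. As there, the whole point is to manufacture, after a suitable base change, a fixed set of global closed holomorphic $1$-forms on $X_K$ whose $k$-fold wedges all vanish, while no $(k-1)$-fold wedge of independent combinations does. The base change itself is routine: since $K<H_W$ acts trivially on $W$, the elements of $W$ become global sections of the pullback of $\mD$ to $Y_K$, and by Lemma \ref{split} each lifts to a global holomorphic $1$-form on $X_K$, closed because $W\subset\Gamma(A,\mD)$, exactly as in the last line of the proof of Proposition \ref{wedge0}.

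The core of the argument is the $k$-analogue of the local-to-global lifting statements. I would first establish a $k$-version of Proposition \ref{wedge0}: by Remark \ref{eccoperche} and Definition \ref{kmt}, $k$-Massey triviality directly supplies, for each $k$-tuple, liftings whose $k$-fold wedge vanishes, and the induction on $\dim W$ used in Proposition \ref{wedge0}---now comparing $k$-fold wedges and using the strictness hypothesis that no wedge of fewer than $k$ liftings is zero to pin down the correction $1$-forms coming from the base---produces a single lifting $\widetilde{W}$ with $\bigwedge^{k}\widetilde{W}=0$. The transition-function comparison of Propositions \ref{lglob} and \ref{localglobal} then globalizes this over $Y_K$, yielding global closed $1$-forms $\widetilde w_1,\dots,\widetilde w_l$ on $X_K$ such that every $k$ of them wedge to zero, while strictness forbids any $k-1$ linearly independent combinations from doing so.

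These forms are precisely the input of Theorem \ref{cas2} with parameter $k-1$; since they are closed, the theorem applies even though $X_K$ need not be compact (as already observed for Theorem \ref{castmassey}), giving a morphism $h_K\colon X_K\to Z$ onto a normal variety $Z$ of general type with $\dim Z=k-1$ and $\widetilde w_i\in h_K^*H^0(Z,\Omega^1_Z)$, whence $W\subset h_K^*H^0(Z,\Omega^1_Z)$ on restriction to the fibers. For the final assertion, observe that $h_K^*H^0(Z,\Omega^1_Z)$ is itself $k$-Massey trivial: its natural liftings are the pullbacks of $1$-forms from the $(k-1)$-dimensional $Z$, and any $k$ of these already wedge to zero on $Z$ and hence on $X_K$. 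Since this space contains the maximal $k$-Massey trivial subspace $W$, maximality forces the equality $W=h_K^*H^0(Z,\Omega^1_Z)$.

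I expect the main obstacle to be the first half of the second paragraph. For $k>d+1$ the matrix-minor computation of Lemma \ref{tecnico} and the contraction criterion of Proposition \ref{masslocale} are no longer directly available, so one cannot simply transcribe the proof of Proposition \ref{wedge0}. Instead one must rerun its induction working throughout with the honest vanishing $s_1\wedge\dots\wedge s_k=0$ furnished by Definition \ref{kmt}, and check carefully that strictness still determines the base-correction terms uniquely at each inductive step and under change of chart, so that a \emph{single} global lifting serves simultaneously for all $k$-tuples.
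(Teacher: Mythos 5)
Your proposal follows essentially the same route as the paper's own proof: base change along $Y_K\to Y$ to make the sections of $W$ global, lift them to global \emph{closed} holomorphic $1$-forms whose $k$-fold wedges vanish, and apply the generalized Castelnuovo--de Franchis Theorem \ref{cas2} (valid in the non-compact case because the liftings are closed) to produce $h_K\colon X_K\to Z$ with $\dim Z=k-1$, with maximality handled exactly as you describe. In fact the paper's proof is terser than yours — it simply asserts ``we can choose liftings for the $\eta_i$ that wedge to zero'' and does not address the uniform-lifting subtlety (the $k$-analogue of Proposition \ref{wedge0}) that you correctly single out as the delicate step.
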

\begin{proof}
As we have seen before, the key point is to use the Castelnuovo-de Franchis Theorem \ref{cas2}.
Up to base change $Y_{K}\to Y$, the sections $\eta_i$, basis of $W$, produce global sections. We can choose liftings for the $\eta_i$ that wedge to zero. 
Then we conclude with the Castelnuovo-de Franchis Theorem \ref{cas2}.

Once again note that even in the case $X_{K}$ not compact, the proof works because the sections $\eta_i$ are in $\mD$ and liftings of sections of $\mD$ are closed by definition.
\end{proof}

This result tells us that, by the same construction of Section \ref{sez5}, we obtain maps $X_\mW\to Z$ and an action of the monodromy group $G_\mW$ on the sets 
$$
\mathcal{H}=\{h_g\colon X_\mW\to Z\mid g\in G_\mW\}
$$ and
$$
\mathcal{K}=\{k_g\colon F_0\to Z\mid g\in G_\mW\}.
$$
The key difference is now that the maps $k_g$ are not surjective for dimensional reasons, hence $\sK$ is not necessarily finite. It follows that, while $G_\mW$ still injects in $\text{Aut}(\sK)$, we cannot immediately conclude that it is finite.

Hence we study the action of $G_\mW$ on $\sH$ instead and we have the following result
\begin{thm}
	\label{chiusozar}
Let $f \colon X \to Y$ be a semistable fibration and let $\mW\leq\mD$ a local system generated by a maximal strict $k$-Massey trivial subspace. Assume also that $Y_\mW\subset \overline{Y_\mW}$ is a Zarisky open subset of a compact variety.
	Then the associated monodromy group $G_\mW$ is finite.
\end{thm}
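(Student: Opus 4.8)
The plan is to mirror the argument of Theorem \ref{monfin}, reducing the finiteness of $G_\mW$ to the finiteness of a set on which it acts faithfully. The crucial change, as already observed, is that the restricted maps $k_g\colon F_0\to Z$ need no longer be surjective: here $\dim Z=k-1>d=\dim F_0$ as soon as $k>d+1$, so the finiteness of morphisms to a general type variety cannot be invoked for $\sK$. Instead I would work with the ambient maps $h_g\colon X_\mW\to Z$, that is with the set $\sH$, and prove two things: (i) the action $G_\mW\times\sH\to\sH$ is faithful, and (ii) $\sH$ is finite. Together these yield an inclusion $G_\mW\hookrightarrow\Aut(\sH)$ into a finite group, giving the conclusion.

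For (i) I would adapt Lemma \ref{formula1} and Lemma \ref{faithful}. By Theorem \ref{castmassey2} applied to $K=H_\mW$ the pullback $h^*$ identifies $H^0(Z,\Omega^1_Z)$ with $W$, and the analogue of Lemma \ref{formula1} reads $h_g^*(\alpha)=g^{-1}h^*(\alpha)$, where $g^{-1}$ acts through the monodromy action defining $\mW$ on the stalk $\widehat{W}$. Since by construction $G_\mW=\pi_1(Y,y)/\ker\rho_\mW$ acts faithfully on $\widehat{W}$, the proof of Lemma \ref{faithful} goes through verbatim: for $g\neq e$ one finds $w=h^*(\alpha)\in W$ and $\hat g\in G$ with $g\hat g w\neq\hat g w$, and setting $g'=\hat g^{-1}$ forces $h_{g'g}^*\neq h_{g'}^*$, hence $h_{g'g}\neq h_{g'}$. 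Note that this step uses only that the forms pull back correctly and \emph{not} the surjectivity of the $k_g$; equivalently, since the restriction $\sH\to\sK$ is $G_\mW$-equivariant and the action on $\sK$ remains faithful, faithfulness on $\sH$ is automatic.

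The heart of the matter is (ii), and this is where the hypothesis that $Y_\mW\subset\overline{Y_\mW}$ is a Zariski open subset of a compact variety is essential. A priori $u_\mW\colon Y_\mW\to Y$ is the covering classified by $H_\mW$, which is infinite exactly when $G_\mW$ is infinite; in that case $Y_\mW$ would be an infinite-sheeted cover, not an algebraic variety of finite type, and neither rational maps nor the finiteness theorem would make sense. The assumption rescues this: it makes $Y_\mW$ quasi-projective, hence $X_\mW=X\times_Y Y_\mW$ quasi-projective, so one may fix a projective compactification $\overline{X_\mW}\supset X_\mW$. Each morphism $h_g\colon X_\mW\to Z$ then extends to a rational map $\overline{X_\mW}\dashrightarrow Z$ to the projective general type variety $Z$ produced by Castelnuovo--de Franchis; these are dominant because $h=h_e$ is dominant (its form-pullback recovers the strict subspace $W$ spanning $H^0(Z,\Omega^1_Z)$) and $h_g=h\circ g$ with $g$ an automorphism of $X_\mW$. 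Finiteness of dominant rational maps from the fixed $\overline{X_\mW}$ to the general type variety $Z$, i.e.\ the Kobayashi--Ochiai-type statement \cite{kob}, then gives that $\sH$ is finite, and with (i) we conclude.

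The main obstacle I anticipate is conceptual rather than computational: it is the control of $Y_\mW$ and $X_\mW$ as honest algebraic objects on which the finiteness theorem can operate. Once the compactness hypothesis furnishes $\overline{X_\mW}$, the remaining technical point is to check that the extended maps stay \emph{dominant}; this is secured by the strictness of $W$ together with the closedness of the liftings (which are global closed $1$-forms on $X_\mW$ and keep the Castelnuovo--de Franchis map $h$ dominant), exactly as in the non-compact version of Theorem \ref{castmassey2}. After dominance is established, the finiteness of $\sH$, and therefore of $G_\mW$, follows directly.
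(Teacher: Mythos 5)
Your proposal is correct and follows essentially the same route as the paper: the paper likewise replaces $\sK$ by $\sH$, notes that $\sH$ is finite by \cite{kob} (see also \cite[Section 2.2]{ZL}) thanks to the compactness hypothesis on $Y_\mW$, and observes that the computations of Lemmas \ref{formula1} and \ref{faithful} carry over to give faithfulness of the $G_\mW$-action on $\sH$. Your additional remarks on why the $h_g$ remain dominant and why faithfulness does not require surjectivity of the $k_g$ are correct elaborations of steps the paper leaves implicit.
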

\begin{proof}
	As anticipated, we consider the action of $G_\mW$ on $\sH$.
We note that, under our hypothesis, $\sH$ is finite by \cite{kob}, see also \cite[Section 2.2]{ZL}, hence we only need to show that the action is faithful. 

We proceed as in Section \ref{sez5}; the same computations of Lemma \ref{formula1} and \ref{faithful} in fact tells us that if $g\in G_\mW$, $g\neq e$ there exists an element $g'$ such that $g\cdot h_{g'}\neq h_{g'}$.
\end{proof}

\subsection{A new insight on the Adjoint Theorem}
As a final step to show the close relation between the theory of Massey products and the Castelnuovo-de Franchis theorem we give a new insight on the classical Adjoint theorem of \cite{PZ}, \cite{RZ1} as follows.

Take a general point $y\in Y$, we  restrict Sequence (\ref{diffrel}) to the fiber $F$ over $y$ and obtain, after identification of $T_{Y,y}^\vee$ with $\mC^m$, the exact sequence
\begin{equation}
\label{seqm}
0\to \sO_F^m\to \Omega^1_{X|F}\to \Omega^1_F\to 0.
\end{equation}  This sequence is extensively studied in \cite{RZ1} with $m=1$.
In that paper we consider $\eta_1,\dots,\eta_{n}$ global $1$-forms on $F$ that can be lifted to $X$ and we call $D$ the divisor on $F$ which is the fixed part of the linear system of the sections $\eta_1\wedge\dots\wedge\widehat{\eta_i}\wedge\dots\wedge\eta_n$. Note that $\eta_1\wedge\dots\wedge\widehat{\eta_i}\wedge\dots\wedge\eta_n$ is just the restriction on $F$ of the $\omega_i$ in Definition \ref{omegai}. One of the main results of the paper is the following \cite[Theorem A]{RZ1}.
\begin{thm}
\label{teoremaA}
Let $\xi\in 
{\rm{Ext}}^1(\Omega^1_F,\mathcal{O}_F)$ be the extension class of 
the sequence
\begin{equation}
	\label{solitaff}
0\to \sO_F\to \Omega^1_{X|F}\to \Omega^1_F\to 0.
\end{equation}
Let $\omega$ be a Massey product  
associated to the sections $\eta_i$. If $\omega$ is Massey trivial then 
$\xi\in\Ker({\rm{Ext}}^1(\Omega_F^1,\mathcal{O}_F)\to 
{\rm{Ext}}^1(\Omega_F^1(-D),\mathcal{O}_F))$.
\end{thm}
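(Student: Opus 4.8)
The plan is to translate the conclusion into a splitting statement and then to produce that splitting explicitly from the liftings furnished by Massey triviality. The map
$\mathrm{Ext}^1(\Omega^1_F,\sO_F)\to \mathrm{Ext}^1(\Omega^1_F(-D),\sO_F)$ is the one induced by the inclusion $\Omega^1_F(-D)\hookrightarrow\Omega^1_F$ (the twist by $\Omega^1_F$ of $\sO_F(-D)\hookrightarrow\sO_F$), and by functoriality of $\mathrm{Ext}^1(-,\sO_F)$ it sends $\xi$ to the class of the extension obtained by pulling back Sequence (\ref{solitaff}) along this inclusion. Hence $\xi\in\Ker$ is equivalent to the splitting of the pulled-back sequence, i.e. to the existence of a lift $\sigma\colon\Omega^1_F(-D)\to\Omega^1_{X|F}$ of the inclusion $\Omega^1_F(-D)\hookrightarrow\Omega^1_F$. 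The entire proof is the construction of such a $\sigma$.

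First I would invoke Massey triviality in the classical form of Proposition \ref{aggzero1} (specialised to $m=1$): since $\omega=m_\xi(\eta_1,\dots,\eta_n)$ is Massey trivial, I may replace the $s_i$ by liftings $\widetilde s_i=s_i+c_i$, with $c_i\in\mC=H^0(\sO_F)$, so that $\widetilde s_1\wedge\dots\wedge\widetilde s_n=0$ in $H^0(\det\Omega^1_{X|F})=H^0(\omega_F)$. The mechanism is that, under the identification $\det\Omega^1_{X|F}\cong\omega_F$, adding the constants changes the wedge exactly by $\sum_j(\pm)\,c_j\,\omega_j$, where $\omega_j=\eta_1\wedge\dots\wedge\widehat{\eta_j}\wedge\dots\wedge\eta_n$, so triviality of $\omega$ lets me cancel it. Now I work on the open set $U_0=F\setminus(D\cup\Sigma)$, where $\Sigma$ is the codimension $\geq 2$ locus (off $D$) along which all the $\omega_i$ vanish; $\Sigma$ has codimension $\geq 2$ precisely because $D$ is the full fixed part of $\langle\omega_1,\dots,\omega_n\rangle$. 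On $U_0$, wherever $\omega_i\neq 0$ the tuple $\{\eta_j\}_{j\neq i}$ is a local frame of $\Omega^1_F$, and I set $\sigma(\eta_j)=\widetilde s_j$ for $j\neq i$. The relation $\widetilde s_1\wedge\dots\wedge\widetilde s_n=0$ forces every $\widetilde s_k$ to lie in the span of $\{\widetilde s_j\}_{j\neq i}$ with the \emph{same} coefficients expressing $\eta_k$ in $\{\eta_j\}_{j\neq i}$; this is exactly what makes the definitions coming from different choices of $i$ agree, so that $\sigma$ is a well-defined splitting of Sequence (\ref{solitaff}) over $U_0$.

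It then remains to control $\sigma$ along $D$. As a homomorphism of locally free sheaves defined off the codimension $\geq 2$ set $\Sigma$, $\sigma$ extends uniquely across $\Sigma$ by normality, giving a splitting on $F\setminus D$. Near a general point of a component $D_k$ of $D$ of multiplicity $m_k$, the chosen $(n-1)$-frame degenerates precisely because $\omega_i$ vanishes to order $m_k$ there; by Cramer's rule $\sigma$ equals a regular numerator divided by $\omega_i$, hence has a pole of order at most $m_k$ along $D_k$. Consequently $\sigma$, viewed via $\sHom(\Omega^1_F,\Omega^1_{X|F})(D)=\sHom(\Omega^1_F(-D),\Omega^1_{X|F})$ as a morphism $\Omega^1_F(-D)\to\Omega^1_{X|F}$, is regular on all of $F$, and its composition with $\Omega^1_{X|F}\to\Omega^1_F$ is the inclusion $\Omega^1_F(-D)\hookrightarrow\Omega^1_F$ by density. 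This is the desired lift, so the pulled-back extension splits and $\xi\in\Ker$. The main obstacle is exactly this last pole estimate: one must verify that the fixed divisor $D$, and not some larger multiple, cancels the poles of $\sigma$, which rests on the local adjugate computation near the general points of the $D_k$ together with the defining property of $D$ as the \emph{full} fixed part of $\langle\omega_1,\dots,\omega_n\rangle$, so that no additional vanishing is present to raise the pole order.
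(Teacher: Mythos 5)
Your argument is correct, and the two delicate points are both handled properly: the consistency of the local definitions of $\sigma$ coming from the different frames $\{\eta_j\}_{j\neq i}$ (forced, as you say, by $\widetilde s_1\wedge\dots\wedge\widetilde s_n=0$ together with the independence of $\{\widetilde s_j\}_{j\neq i}$ off the zero locus of $\omega_i$), and the bound on the pole order of $\sigma$ along a component $D_k$ by $m_k=\min_i \mathrm{ord}_{D_k}(\omega_i)$, which is exactly the multiplicity of $D_k$ in the fixed part $D$. However, your route is genuinely different from the one this paper takes. Here Theorem \ref{teoremaA} is recovered, in the slightly sharper form involving a divisor $D'\leq D$, as Corollary \ref{cork} of Theorem \ref{aggiuntanuovo}: Massey triviality produces liftings that wedge to zero, the Castelnuovo-de Franchis Theorem \ref{cas2} then yields a morphism $h\colon F\to Z$ onto a $d$-dimensional variety of general type, and the line subsheaf $\sF=(h^*\Omega^d_Z)_{|F}\subset\Omega^d_{X|F}$, with $\sF^{\vee\vee}\cong\omega_F(-D')$, splits the twisted top-wedge sequence; one then translates back through ${\rm Ext}^1(\omega_F(-D'),\Omega^{d-1}_F)\cong{\rm Ext}^1(\Omega^1_F(-D'),\sO_F)$ in the case $m=1$. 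You bypass $Z$ entirely and build the splitting $\sigma\colon\Omega^1_F(-D)\to\Omega^1_{X|F}$ by hand, via Cramer's rule and Hartogs extension across codimension two. Your approach is more elementary and self-contained, close in spirit to the original argument of \cite{RZ1}; the paper's approach is less direct but identifies the geometric source of the splitting, namely the variety $Z$ --- which is precisely the point the paper wants to make in that section --- and yields the finer conclusion that $\xi$ already vanishes in ${\rm Ext}^1(\Omega^1_F(-D'),\sO_F)$ for a possibly smaller effective divisor $D'\leq D$, whereas your construction only exhibits the splitting after twisting by the full fixed divisor $D$.
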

This result has been used mainly to prove infinitesimal Torelli type theorems.

In our new setting we use the same definition of $D$, that is the fixed part of the sections $\eta_{j_1}\wedge\dots\wedge\widehat{\eta_{j_i}}\wedge\dots\wedge\eta_{j_d}$ with $\{j_1,\dots, j_d\}\subset\{1,\dots,k\}$, and we prove the following result.
\begin{thm}
\label{aggiuntanuovo}
Consider Sequence (\ref{seqm}) and $\eta_1,\dots,\eta_k$ global sections of $H^0(\Omega_F^1)$ which are in $\mD$. If the $\eta_i$ are $k$-Massey trivial, then there exists a torsion free sheaf $\sF$  of rank $k-1\choose d$ contained in $\Omega^d_{X|F}$ whose image $\sL$ in $\omega_{F}$ satisfies $\sL^{\vee\vee} =\omega_{F}(-D')$, with $D'\leq D$ an effective divisor.
If we take $k=d+1$, $\sF$ is isomorphic onto the image, hence $\sF^{\vee\vee} \cong\omega_{F}(-D')$.
\end{thm}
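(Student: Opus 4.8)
The plan is to construct $\sF$ directly from the liftings furnished by $k$-Massey triviality and then read off its image in $\omega_F$ through the top exterior power of Sequence (\ref{seqm}).

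First I would apply Definition \ref{kmt} on the general fiber: the $k$-Massey triviality of the sections $\eta_i\in\mD_y\subset H^0(F,\Omega^1_F)$ yields liftings $s_1,\dots,s_k\in H^0(F,\Omega^1_{X|F})$ of the $\eta_i$ through Sequence (\ref{seqm}) with $s_1\wedge\dots\wedge s_k=0$ in $H^0(F,\Omega^k_{X|F})$. Let $\sG\subseteq\Omega^1_{X|F}$ be the subsheaf generated by $s_1,\dots,s_k$. The vanishing $s_1\wedge\dots\wedge s_k=0$ forces the generic rank of $\sG$ to be at most $k-1$, while the running strictness assumption (no wedge of fewer than $k$ of the $s_i$ vanishes) forces it to be exactly $k-1$.

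Next I would define $\sF$ as the image of the natural map $\bigwedge^d\sG\to\bigwedge^d\Omega^1_{X|F}=\Omega^d_{X|F}$, that is the subsheaf generated by the $d$-fold wedges $s_{j_1}\wedge\dots\wedge s_{j_d}$. At the generic point this map is $\bigwedge^d$ of the injection $\mC^{k-1}\hookrightarrow\mC^n$, hence injective, so $\sF$ has generic rank $\binom{k-1}{d}$; being a subsheaf of the locally free $\Omega^d_{X|F}$, it is torsion free. Composing $\sF\hookrightarrow\Omega^d_{X|F}$ with the surjection $q\colon\Omega^d_{X|F}\to\Omega^d_F=\omega_F$ coming from the top wedge of Sequence (\ref{seqm}), I set $\sL:=q(\sF)\subseteq\omega_F$; concretely $\sL$ is generated by the images $q(s_{j_1}\wedge\dots\wedge s_{j_d})=\eta_{j_1}\wedge\dots\wedge\eta_{j_d}$, and strictness of $W\subset\mD$ in the sense of Definition \ref{strict} (the top forms $\eta_{j_1}\wedge\dots\wedge\eta_{j_d}$ inject into $\omega_F$) guarantees $\sL\neq 0$. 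I would then invoke the standard description of the reflexive hull of a nonzero subsheaf of a line bundle on a smooth variety: writing $\sL=\omega_F\otimes\sJ$ with $\sJ\subset\sO_F$ the ideal generated by the local expressions of the $\eta_{j_1}\wedge\dots\wedge\eta_{j_d}$, the divisorial part of $\sJ$ is $\sO_F(-D')$ with $D'$ the common fixed divisor of these sections and the codimension $\geq 2$ part disappears upon double-dualizing, so $\sL^{\vee\vee}=\omega_F(-D')$. Since every generator of $\sL$ vanishes along the fixed part $D$ of the whole family of $d$-fold wedges, $D'\leq D$. Finally, for $k=d+1$ the sheaf $\sG$ has rank $d$ and $\sF=\bigwedge^d\sG$ has rank $\binom{d}{d}=1$; as $q|_{\sF}\colon\sF\to\sL$ is a surjection of rank-one torsion-free sheaves onto the nonzero $\sL$, its kernel is torsion, hence zero, so $\sF\xrightarrow{\sim}\sL$ and therefore $\sF^{\vee\vee}\cong\sL^{\vee\vee}=\omega_F(-D')$.

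I expect the main obstacle to be the rank bookkeeping in the middle step: pinning the generic rank of $\sG$ to \emph{exactly} $k-1$ (rather than something smaller) requires using the strictness hypothesis precisely, and transferring this both to the rank $\binom{k-1}{d}$ of $\sF$ and to the non-vanishing of $\sL$ must be handled with care so that the reflexive-hull computation is not vacuous. Once these are in place, the identification $\sL^{\vee\vee}=\omega_F(-D')$ with $D'\leq D$ and the $k=d+1$ isomorphism $\sF\cong\sL$ are routine.
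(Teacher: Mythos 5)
Your proof is correct, but it follows a genuinely different route from the paper's. The paper's argument passes through Proposition \ref{localglobal} and the Castelnuovo-de Franchis Theorem \ref{cas2}: the closed liftings with vanishing $k$-fold wedge produce a morphism $h$ from $f^{-1}(\Delta)$ onto a $(k-1)$-dimensional variety $Z$ of general type with $W\subset h^*H^0(Z,\Omega^1_Z)$, and $\sF$ is defined as $(h^*\Omega^d_Z)_{|F}$, so that the rank $\binom{k-1}{d}$ and the identification of the image with $\omega_F(-D')$ up to codimension two are read off from $Z$. You instead build $\sF$ by hand as the subsheaf of $\Omega^d_{X|F}$ generated by the $d$-fold wedges $s_{j_1}\wedge\dots\wedge s_{j_d}$, getting the rank from pointwise linear algebra (the $s_i$ span a $(k-1)$-dimensional space at the generic point by the vanishing $k$-fold wedge plus strictness, and the $d$-fold wedges of a spanning set of a $(k-1)$-dimensional space generate its full $d$-th exterior power); this is more elementary and bypasses Castelnuovo-de Franchis entirely. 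What each approach buys: yours is self-contained and in fact pins down $D'=D$ exactly, since your $\sL$ is generated by precisely the wedges $\eta_{j_1}\wedge\dots\wedge\eta_{j_d}$; the paper's $\sF$ may be strictly larger (all of $h^*\Omega^d_Z$), which is why the statement only claims $D'\leq D$, and — as the remark after Corollary \ref{cork} makes explicit — the whole point of the section is to exhibit the variety $Z$ as the geometric source of the splitting, a feature your argument does not recover. Two minor points to tighten: the strictness you need for the rank of $\sG$ is the standing assumption of Section \ref{masseybig} (no wedge of fewer than $k$ of the liftings vanishes), not Definition \ref{strict}; and the non-vanishing of some $\eta_{j_1}\wedge\dots\wedge\eta_{j_d}$ on $F$ (needed so that $\sL\neq 0$ and $D$ is defined) does not formally follow from strictness of the liftings, since the $s_i$ can be pointwise independent in $\Omega^1_{X|F}$ while their images in $\Omega^1_F$ are not; it is implicit in the definition of $D$ and should be flagged as such.
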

\begin{proof}
Recall that $y$ is the base point of $Y$ corresponding to $F$. Take a small polydisk $\Delta$ around $y$ and take liftings of the $\eta_i$ in $\Gamma(\Delta, f_*\Omega^1_{X,d})$. As always these liftings are closed and by Massey triviality we can assume that their wedge is zero. By Castelnuovo-de Franchis we find a morphism $h\colon f^{-1}(\Delta)\to Z$ which if restricted to the fiber $F$ gives a morphism $F\to Z$.

We can then define $\sF=(h^*\Omega^d_Z)_{|F}$. Since $(h^*\Omega^d_Z)_{|F}$ is a subsheaf of $\Omega^d_{X|F}$, $\sF$ fits into the following diagram
\begin{equation}
\xymatrix{
\Omega^d_{X|F}\ar[r]&\omega_F\\
\sF\ar@{^{(}->}[u]\ar[ur]&
}
\end{equation}
Since the $\eta_i$ come from $Z$, the sections  $\eta_{j_1}\wedge\dots\wedge\widehat{\eta_{j_i}}\wedge\dots\wedge\eta_{j_d}$ are in the image of $\sF$ and  they generate $\omega_{F}$ outside of $D$ and some loci of codimension $\geq2$. Note that in principle $\sF$ may contain also other sections, hence the image of $\sF$ actually generates a sheaf $\sL=\omega_{F}(-D')\otimes \sI$, with $D'\leq D$.
Taking the double dual gives the thesis.

The statement for $k=d+1$ follows from noticing that in this case $Z$ has dimension $d$ and $\sF$ is the pullback of its canonical sheaf. Hence $\sF$ and its image have the same rank and the kernel would be a torsion subsheaf, which is not possible.
\end{proof}

Now if consider the wedge product of Sequence (\ref{seqm}) we obtain an exact sequence of the form
\begin{equation}
	\label{seqmwe}
	0\to \sK\to \Omega^d_{X|F}\to \omega_F\to 0,
\end{equation} hence we have defined a map
\begin{equation}
	{\rm{Ext}}^1(\Omega^1_F,\sO^m_F)\to {\rm{Ext}}^1(\omega_F,\sK).
\end{equation} 
From Theorem \ref{aggiuntanuovo} we easily recover a slightly more general version of Theorem \ref{teoremaA}.

\begin{cor}
\label{cork}
With the hypothesis of Theorem \ref{aggiuntanuovo} and $k=d+1$ we have that $\xi\in {\rm{Ext}}^1(\Omega^1_F,\sO^m_F)$ corresponding to Sequence (\ref{seqm}) is in the kernel of 
\begin{equation}
{\rm{Ext}}^1(\Omega^1_F,\sO^m_F)\to {\rm{Ext}}^1(\omega_F(-D'),\sK).
\end{equation} If furthermore $m=1$, $\xi$ is in the kernel of 
\begin{equation}
	{\rm{Ext}}^1(\Omega^1_F,\sO_F)\to {\rm{Ext}}^1(\Omega^1_F(-D'),\sO_F).
\end{equation} 
\end{cor}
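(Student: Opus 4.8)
The plan is to deduce the two extension-group statements directly from Theorem \ref{aggiuntanuovo} by unwinding the definitions of the relevant \texttt{Ext} groups as classifying extensions, and then tracing the class $\xi$ through the natural maps induced by the inclusions of sheaves. The key observation, in the case $k=d+1$, is that Theorem \ref{aggiuntanuovo} provides a sheaf $\sF\subset \Omega^d_{X|F}$ of rank $1$ whose double dual is $\omega_F(-D')$; equivalently, $\sF$ gives a splitting datum that kills the obstruction to pushing forward the extension class of Sequence (\ref{seqmwe}) along the twist by $-D'$.

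First I would make precise the extension class being tracked. Wedging Sequence (\ref{seqm}) produces Sequence (\ref{seqmwe}), and the class $\xi\in {\rm{Ext}}^1(\Omega^1_F,\sO^m_F)$ of Sequence (\ref{seqm}) maps to the class of (\ref{seqmwe}) under the natural ${\rm{Ext}}^1(\Omega^1_F,\sO^m_F)\to {\rm{Ext}}^1(\omega_F,\sK)$ recorded in the excerpt. The inclusion $\omega_F(-D')\hookrightarrow \omega_F$ induces a restriction map ${\rm{Ext}}^1(\omega_F,\sK)\to {\rm{Ext}}^1(\omega_F(-D'),\sK)$, and the goal of the first assertion is that the image of $\xi$ in ${\rm{Ext}}^1(\omega_F(-D'),\sK)$ vanishes. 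Vanishing of an extension class in ${\rm{Ext}}^1(\omega_F(-D'),\sK)$ is precisely the existence of a lift of the inclusion $\omega_F(-D')\hookrightarrow \omega_F$ to a map $\omega_F(-D')\to \Omega^d_{X|F}$ through the surjection in (\ref{seqmwe}). The sheaf $\sF$ of Theorem \ref{aggiuntanuovo} supplies exactly this lift: for $k=d+1$ we have $\sF^{\vee\vee}\cong \omega_F(-D')$ and $\sF\hookrightarrow \Omega^d_{X|F}$ maps onto $\sL\subset\omega_F$ with $\sL^{\vee\vee}=\omega_F(-D')$, so after taking double duals (and using that $\Omega^d_{X|F}$ is torsion free so that the lift extends over the codimension $\geq 2$ locus) the inclusion $\omega_F(-D')\hookrightarrow\omega_F$ factors through $\Omega^d_{X|F}$. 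This is the splitting that forces the image of $\xi$ to be zero.

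For the refinement when $m=1$, I would specialize Sequence (\ref{seqm}) to Sequence (\ref{solitaff}), so that $\sO^m_F=\sO_F$ and $d=n-1$, whence $\Omega^d_{X|F}=\omega_{X|F}\cong\omega_F$ and the wedged sequence (\ref{seqmwe}) degenerates into the original extension (\ref{solitaff}) itself. Then $\sK=\sO_F$, and the general statement ${\rm{Ext}}^1(\omega_F(-D'),\sK)$ becomes ${\rm{Ext}}^1(\Omega^1_F(-D'),\sO_F)$ after applying the identifications $\omega_F=\Omega^{n-1}_F$ and the duality built into the computation; the lift produced above becomes exactly the factorization of $\xi$ through ${\rm{Ext}}^1(\Omega^1_F(-D'),\sO_F)$ appearing in Theorem \ref{teoremaA}, recovering that theorem as the $m=1$ case.

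The main obstacle I anticipate is the bookkeeping in passing between the two equivalent descriptions of the extension class: on one hand as an element of ${\rm{Ext}}^1(\omega_F(-D'),\sK)$ and on the other as the obstruction to lifting $\omega_F(-D')\hookrightarrow\omega_F$ through (\ref{seqmwe}), and then matching this with the dual formulation ${\rm{Ext}}^1(\Omega^1_F(-D'),\sO_F)$ used in Theorem \ref{teoremaA}. Concretely, one must check that the lift furnished by $\sF$ is defined not merely generically but as an honest sheaf map, which requires invoking torsion-freeness of $\Omega^d_{X|F}$ (established in Section \ref{sez1}) to extend the generically defined lift across the divisorial and higher-codimension loci where $\sF\to\sL$ may fail to be an isomorphism; once this is secured, the vanishing of the extension class is formal.
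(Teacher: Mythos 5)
Your treatment of the first assertion is essentially the paper's own argument: for $k=d+1$ Theorem \ref{aggiuntanuovo} gives $\sF\cong\sL$ with $\sL^{\vee\vee}\cong\omega_F(-D')$, and the resulting map $\omega_F(-D')\to\Omega^d_{X|F}$ lifting the inclusion $\omega_F(-D')\hookrightarrow\omega_F$ splits the pullback of Sequence (\ref{seqmwe}), i.e.\ kills the image of $\xi$ in ${\rm{Ext}}^1(\omega_F(-D'),\sK)$; this is exactly the splitting of $0\to\sK\to\sE\to\omega_F(-D')\to 0$ invoked in the paper. One small correction: to extend the lift from $\sL$ across the codimension $\geq 2$ locus where $\sL\subsetneq\sL^{\vee\vee}$ you should appeal to the fact that $\Omega^d_{X|F}$ is locally free (hence normal/reflexive), not merely torsion free --- torsion-freeness alone does not permit Hartogs-type extension. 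Since $\Omega^d_X$ is locally free on the smooth $X$ and restriction to $F$ preserves this, the step is fine, but the property you cite is not the one doing the work.

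The reduction of the second assertion is where your argument genuinely breaks. When $m=1$ we have $d=n-1$, so $\Omega^d_{X|F}=\Omega^{n-1}_{X|F}$ is a bundle of rank $\binom{n}{n-1}=n$, not the line bundle $\omega_{X|F}=\Omega^n_{X|F}$; consequently Sequence (\ref{seqmwe}) does \emph{not} degenerate to (\ref{solitaff}), and its kernel is $\sK\cong\Omega^{d-1}_F$, of rank $d$, not $\sO_F$ --- these coincide only in the surface case $d=1$. The statement to be proved really concerns ${\rm{Ext}}^1(\omega_F(-D'),\Omega^{d-1}_F)$, and the missing ingredient is the contraction isomorphism $\Omega^{d-1}_F\cong T_F\otimes\omega_F$, which yields $\sHom(\omega_F(-D'),\Omega^{d-1}_F)\cong T_F(D')\cong\sHom(\Omega^1_F(-D'),\sO_F)$ and hence ${\rm{Ext}}^1(\omega_F(-D'),\Omega^{d-1}_F)\cong{\rm{Ext}}^1(\Omega^1_F(-D'),\sO_F)$. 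This is precisely the identification the paper uses in its one-line treatment of the $m=1$ case; your appeal to ``the duality built into the computation'' must be replaced by this concrete isomorphism, and the claims $\Omega^d_{X|F}=\omega_{X|F}$ and $\sK=\sO_F$ deleted.
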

\begin{proof}
By the previous Theorem we know that $\sF^{\vee\vee}$ is isomorphic to $\omega_F(-D')$. This gives a splitting of the sequence 
\begin{equation}
0\to \sK\to \sE\to \omega_F(-D')\to 0
\end{equation} which corresponds to the image of $\xi$ via the composition 
\begin{equation}
\text{Ext}^1(\Omega^1_F,\sO^m_F)\to \text{Ext}^1(\omega_F,\sK)\to \text{Ext}^1(\omega_F(-D'),\sK).
\end{equation} 
For the second assertion, note that in the case $m=1$ we have $\sK=\Omega^{d-1}_F$ and $\text{Ext}^1(\omega_F(-D'),\Omega^{d-1}_F)\cong \text{Ext}^1(\Omega^1_F(-D'),\sO_F)$.
\end{proof}
\begin{rmk}
Note that since $D'\leq D$, from the inclusion 
$$
\Omega^1_F(-D)\subset \Omega^1_F(-D')\subset \Omega^1_F
$$ we have a map of extension groups 
\begin{equation}
\text{Ext}^1(\Omega^1_F,\sO_F)\to \text{Ext}^1(\Omega^1_F(-D'),\sO_F)\to \text{Ext}^1(\Omega^1_F(-D),\sO_F).
\end{equation} Hence if the image of $\xi$ is is zero in $\text{Ext}^1(\Omega^1_F(-D'),\sO_F)$ by this Corollary, it is also zero in $\text{Ext}^1(\Omega^1_F(-D),\sO_F)$; this is why this result is a slightly more general version of the Adjoint theorem \ref{teoremaA}. 

The main point of these results is that they highlight the fact that the splitting obtained in Theorem \ref{teoremaA} is due to the presence of the variety $Z$ given by the Castelnuovo-de Franchis theorem.
\end{rmk}



\begin{thebibliography}{Muk04}
%
%
\bibitem[BGN]{BGN}
M. \'A. Barja, V. Gonz\'alez-Alonso, J. C. Naranjo, \emph{Xiao’s conjecture for general fibred surfaces}, Journal f\"ur die reine und angewandte Mathematik 739 (2018), 297--308. 
%
%
    
%
%
%
%
%
\bibitem[Ca]{Ca2} F. Catanese, \emph{Moduli
and classification of irregular Kaehler manifolds (and algebraic varieties) with Albanese
general type fibrations},  Invent. Math. 104 (1991), no. 2, 263--289. 

\bibitem[Cat]{Cat} M. de Cataldo, \emph{Perverse sheaves and the topology of algebraic varieties}, IAS/Park City Mathematics Series, Vol 24, 2017.

\bibitem[CD]{CD1}
F. Catanese, M. Dettweiler, \emph{Answer to a question by Fujita on Variation of Hodge Structures}, Higher Dimensional Algebraic Geometry: In honour of Professor Yujiro Kawamata's sixtieth birthday, Mathematical Society of Japan, Tokyo, Japan, (2017), 73--102.
\bibitem[CK]{CK}
F. Catanese, Y. Kawamata, \emph{Fujita decomposition over higher dimensional base}, European Journal of Mathematics 5,  (2019), 720--728.






%
%
%
%
%

%
%
\bibitem[CNP]{CNP}
A. Collino, J. C. Naranjo, G. P. Pirola, {\em The Fano normal function}, J. Math. Pures Appl. (9) 98 (2012), no. 3, 346--366.
%
%
%
%

%

\bibitem[CP]{CP} A. Collino, G. P. Pirola, 
\emph{The Griffiths infinitesimal invariant for a curve in its Jacobian}, Duke Math. J., 78 (1995), no. 1, 59--88.

\bibitem[CRZ]{CRZ} L. Cesarano, L. Rizzi, F. Zucconi, \emph{On birationally trivial families and Adjoint quadrics}, submitted for publication, (2019).
%
%



\bibitem[Fu1]{Fu}
T. Fujita, \emph{On K\"ahler fiber spaces over curves}, 
 J. Math. Soc. Japan 30 (1978), no. 4, 779--794. 

\bibitem[Fu2]{Fu2}
T. Fujita, \emph{The sheaf of relative canonical forms of a K\"ahler fiber
space over a curve}, Proc. Japan Acad. Ser. A Math. Sci. 54 (1978),
no. 7, 183--184.

%
%
%
%
\bibitem[G-A]{victor}
V. Gonz\'alez-Alonso, \emph{On deformations of curves supported on rigid divisors}, Ann. Mat. Pura Appl. (4)
195(1), 111--132 (2016).
%
%
%
%
%


%

%
%
%
%
%
%
%

\bibitem[Il]{Il}
L. Illusie, {\em R\'eduction semi-stable et d\'ecomposition de complexes de de Rham  \`a coefficients}, Duke Math. J. 60 (1990), no. 1, 139--185.
%

\bibitem[KO]{kob}
S. Kobayashi, T. Ochiai, \emph{Meromorphic Mappings onto Compact Complex Spaces of General Type}, Inventiones mathematicae 31 (1976), 7--16. 

%

%
%



\bibitem[MR]{MR}
 V. Maillot, D. R\"ossler {\em Une conjecture sur la torsion des classes de Chern des fibr\'es de Gauss-Manin}, Publ. Res. Inst. Math. Sci. 46 (2010), no. 4, 789--828.

%
%
%

\bibitem[PT]{PT}
G.P. Pirola, S. Torelli, \emph{Massey Products and Fujita decompositions on fibrations of curves}, Collectanea Mathematica, 71 (2020), 39--61. 
%

%
\bibitem[PR]{PR}
G. P. Pirola, C. Rizzi, {\em Infinitesimal invariant and vector bundles}, Nagoya Math. J., 186 (2007), 95--118.
%
%
\bibitem[PZ]{PZ}
G. P. Pirola, F. Zucconi, \emph{Variations of the Albanese morphisms,} J. Algebraic Geom., 12 (2003), no. 3, 535--572. 


\bibitem[Ran]{Ran}
Z. Ran \emph{On subvarieties of abelian varieties}, Invent. Math., 62(3) 459--479, (1981).


\bibitem[Ra]{Ra}
E. Raviolo, {\em Some geometric applications of the theory of variations of Hodge structures}, Ph.D. Thesis.
%
%

\bibitem[R]{R} L. Rizzi, \emph{On Massey products and Rational homogeneous varieties}, arXiv: 2012.06375 (2020).
 
\bibitem[RZ1]{RZ2} L. Rizzi, F. Zucconi, {\em Generalized adjoint forms on algebraic varieties,}  Annali di Matematica Pura ed Applicata, Vol. 196, Issue 3, (2017), 819--836.
\bibitem[RZ2]{RZ3} L. Rizzi, F. Zucconi, {\em On Green’s proof of infinitesimal Torelli theorem for hypersurfaces,} Vol. 29, Issue 4, (2018), 689--709
\bibitem[RZ3]{RZ1} L. Rizzi, F. Zucconi, {\em Differential forms and quadrics of the canonical image}, Annali di Matematica Pura ed Applicata (2020). 
\bibitem[RZ4]{RZ4} L. Rizzi, F. Zucconi, {\em Fujita decomposition and Massey product for fibered varieties,} Nagoya Mathematical Journal, (2021), p. 1–29.
\bibitem[RZ5]{RZ5} L. Rizzi, F. Zucconi, {\em Local systems, algebraic foliations and fibrations}, arXiv:2205.15085 (2022).

%

%



\bibitem[Vo1]{Vo1}
C. Voisin, \emph{Hodge theory and complex algebraic geometry, I}. Translated from the French by Leila Schneps. Cambridge Studies in Advanced Mathematics, 76. Cambridge University Press, Cambridge, 2002.



%
\bibitem[ZL]{ZL} M.G. Zal'denberg, V.Y. Lin, \emph{Finiteness Theorems for Holomorphic Maps} in Several Complex Variables III. Encyclopaedia of Mathematical Sciences, vol 9, Springer, Berlin, Heidelberg, 1989.
\end{thebibliography}
\end{document}